\setlist[itemize]{itemsep=-1mm}
\newtheorem{thm}{Theorem}
\newtheorem{lem}[thm]{Lemma}
\newtheorem{cor}[thm]{Corollary}
\newtheorem{rem}[thm]{Remark}
\newtheorem{conj}[thm]{Conjecture}
\let\phi\varphi
\newcommand{\vertiii}[1]{{\left\vert\kern-0.25ex\left\vert\kern-0.25ex\left\vert #1 
    \right\vert\kern-0.25ex\right\vert\kern-0.25ex\right\vert}}
\newcommand{\I}{\mathbbm{1}}
\newcommand{\E}{\mathbb{E}}
\newcommand{\R}{\mathbb{R}}
\newcommand{\Z}{\mathbb{Z}}
\DeclareMathOperator{\var}{var}
\DeclareMathOperator{\cov}{cov}
\DeclareMathOperator{\dist}{dist}
\DeclareMathOperator{\supp}{supp}
\numberwithin{equation}{section}
\numberwithin{thm}{section}
\begin{document}

\title{Equivalence of a mixing condition and the LSI in spin systems with infinite range interaction}

\author{Christopher Henderson\thanks{Unit\'e Math\'ematiques Pures et Appliqu\'ees, \'Ecole Normale Sup\'erieure de Lyon, christopher.henderson@ens-lyon.fr}  \mbox{ and} Georg Menz\thanks{Department of Mathematics, UCLA, gmenz@math.ucla.edu}}

%

\maketitle

\begin{abstract}
We investigate unbounded continuous spin-systems with infinite-range interactions.  We develop a new technique for deducing decay of correlations from a uniform Poincar\'e inequality based on a directional Poincar\'e inequality, which we derive through an averaging procedure. We show that this decay of correlations is equivalent to the Dobrushin-Shlosman mixing condition.  With this, we also state and provide a partial answer to a conjecture regarding the relationship between the relaxation rates of non-ferromagnetic and ferromagnetic systems. Finally, we show that for a symmetric, ferromagnetic system with zero boundary conditions, a weaker decay of correlations can be bootstrapped.
\end{abstract}


%
%

\section{Introduction}\label{s_introduction}

In this article we consider a system of continuous real-valued spins on a subset of the lattice~$\mathbb{Z}^d$. The system is described by its finite volume Gibbs measures~$\mu_{\Lambda}^{\omega}$ (for a precise definition see Section~\ref{s_setting} below). Here,~$\Lambda \subset \mathbb{Z}^d$ denotes the domain on which the Gibbs measure is active, and~$\omega$ denotes the boundary values. Recently, some progress has been made in transferring classical results for finite-range interaction to infinite-range interaction. For instance, classical results regarding the absence of a phase-transition for systems on a one-dimensional lattice when the interaction is finite-range have recently been shown to hold when the interactions decay algebraically of order $2+\alpha$ \cite{Zeg96, bblMenzNittka}.\\

In that spirit, we generalize in this article another classical result to infinite-range interaction.  In higher dimensions, the situation is murkier than the one described above; phase transitions can occur.  In other words, the behavior of the system may depend on the size of the lattice and the boundary conditions imposed.  In physics, the high temperature region of a system is characterized by a sufficient decay of correlations.  We show that the high temperature region, where phase transitions do not occur, can be characterized by a uniform spectral gap or a uniform log-Sobolev inequality.  The connection between mixing conditions on the Gibbs measure and the relaxation of the associated Glauber dynamics has a long history dating back to the 1970s~\cite{HS76,SZ92a}.  Stroock and Zegarlinski~\cite{SZ92a,SZ92b}, in the compact spin space setting, showed that when the interactions have finite range, the following three conditions are equivalent:
\begin{enumerate}
\item The finite volume Gibbs measures~$\mu_{\Lambda}^{\omega}$ satisfies the Dobrushin-Shlosman mixing condition.
\item The finite volume Gibbs measures~$\mu_{\Lambda}^{\omega}$ satisfy a logarithmic Sobolev inequality (LSI) uniform in the system size~$|\Lambda|$ and the boundary condition~$\omega$.
\item The finite volume Gibbs measures~$\mu_{\Lambda}^{\omega}$ satisfy a Poincar\'e inequality (PI) uniform in the system size~$|\Lambda|$ and the boundary condition~$\omega$.
\end{enumerate}
In the setting of a compact or discrete spin space, this result was generalized in 1995 by Laroche~\cite{La95} to infinite-range interaction. The case of an unbounded spin space is technically more challenging due to the loss of compactness. Therefore it took until 2001 when Yoshida~\cite{bblYoshida01} was able to obtain this type of result to unbounded spins with finite-range interaction. In this article we further generalize this statement to unbounded spins with infinite-range interaction.\\

We first describe what the conditions~(1)-(3) mean. 
We start by describing condition (1) regarding mixing conditions. There are a variety of different mixing conditions, see for example~\cite{DS1,DS87,DW90} or~\cite{Martinelli_notes}, and many of them are equivalent. Among the mixing conditions, the most famous is the Dobrushin-Shlosman mixing condition (see~\cite{DS1}). The Dobrushin-Shlosman mixing condition referred to above is known to be true for example when the interaction is weak enough. The common thread between all mixing conditions is that each condition quantifies the influence on the finite-volume Gibbs measures~$\mu_{\Lambda}^{\omega}$ of changing a spin value over large distances and therefore are describing a static property of~$\mu_{\Lambda}^{\omega}$. Mixing conditions are quite useful in the study of spin-systems. As we mentioned above, one can use them to describe the high-temperature regime of the system. In this article, we show that a uniform PI yields the following mixing condition: The covariance of functions of bounded support with respect to the finite-volume Gibbs measures $\mu_{\Lambda}^{\omega}$ decays algebraically of order~$d+\alpha$, $\alpha>0$, in the distance between the supports of each function. We show in Lemma~\ref{p_relation_mixing_conditions}, below, that this condition is equivalent to the Dobrushin-Shlosman mixing condition. For closing the loop of equivalences, we use the special case of spin-spin correlations decaying algebraically in the distance of the sites.\\

We now describe conditions (2) and (3).  The PI and the LSI are associated to properties of the Glauber dynamics of the system, the naturally associated diffusion process on the state space of the system.  Both yield exponential decay to equilibrium of this system, which is described by the Gibbs measure $\mu_{\Lambda}^\omega$ (for details see for example the introduction in~\cite{M_Dis}). The LSI was originally introduced by Gross~\cite{Gross} and yields the PI by linearization (see for example~\cite{bblLedoux}). The difference between the LSI and the PI is that the LSI yields exponential convergence with respect to relative entropies, whereas the PI only yields convergence with respect to variance. This difference becomes important when looking at high-dimensional situations or continuum limits i.e.~sending $|\Lambda| \to \infty$ (see for example~\cite{GORV,FM}). Here, the LSI is used to derive quantitative error bounds on the convergence, whereas the PI usually only yields qualitative convergence results. Deducing a uniform LSI is usually significantly harder than deducing a uniform PI. For example there are coupling methods available for the PI but not for the LSI. Those coupling methods have shown to be highly efficient (see e.g.~\cite{CW97, Chen98,Sam05,Ol09, LPW09, Eb13}).\\

The fact that the three conditions (1),~(2), and~(3) are equivalent is quite striking. On the one hand, this statement connects a static property of the equilibrium state of the system, namely the decay of correlations, to a dynamic property, namely, the relaxation to the equilibrium. On the other hand, it states that a uniform LSI is equivalent to a uniform PI.  In these situations it would be sufficient to deduce a uniform PI to obtain the much stronger uniform LSI, significantly simplifying the task.  \\

In our main result, namely Theorem~\ref{p_main_result}, we obtain the equivalence of the mixing condition, the LSI, and PI even in the case of algebraically decaying infinite-range interactions.  Another feature compared to \cite{bblYoshida01} is that we require fewer assumptions on the interaction terms and on the single-site potentials of the system.  Indeed, we allow non-ferromagnetic interactions and additionally the single-site potentials only have to be a bounded perturbation of a strictly convex potential; we do not require that the interaction is ferromagnetic and that the single-site potentials are strictly super-quadratic (cf.~Remark~\ref{r_cond_ssp_yoshida} from below). Another consequence of our main result is that an algebraic decay of spin-spin correlations yields an algebraic decay of correlations of general functions~$f$ and~$g$. \\

There is another important difference to existing results on discrete and compact spin spaces (cf.~\cite{SZ92a,SZ92b,La95}). To show that decay of correlations yields a uniform LSI, we cannot allow general multi-body interactions and have to restrict ourselves to a purely quadratic two-body interaction. The reason is that for this implication, we refer to the two-scale approach, which only considered those type of interactions (cf.~\cite{OR07,bblMenz13} and Remark~\ref{r_general_interaction_II} from below).  However, the argument that a uniform PI yields decay of correlations is quite robust and applies even in the case of general multi-body interactions (see Theorem~\ref{p_sg_dc}, Remark~\ref{r_general_interaction_I} below).\\

We restrict ourselves to the spin space~$\mathbb{R}$ for technical reasons. In principle, we do not expect any significant obstacles for generalizing the main result to more complicated spin spaces like~$\mathbb{R}^k$. However, one would have to generalize the two-scale approach to those spaces. A more challenging question is if our main result --even for finite range interaction-- could be generalized to interacting birth-and-death Markov chains on~$\mathbb{N}$ or~$\mathbb{Z}$. For those systems one would have to work with a modified version of the LSI (called MLSI) because the LSI does not hold (see for example~\cite{CDPP09} and references therein). The main technical challenge would be to establish a two-scale criterion for the MLSI in the sense of~\cite{OR07} or~\cite{JSTV04}, which is an open problem.\\

Below, we formulate a conjecture saying that the LSI constant for a non-ferromagnetic system is bounded from below by the LSI constant of the associated ferromagnetic system (see Conjecture \ref{conjecture} from below).  As an application of our main result, we deduce a partial answer to this conjecture (see~Corollary~\ref{c_conjecture}).\\

The second result, namely Theorem~\ref{p_bootstrap}, considers the situation of zero boundary values, ferromagnetic interaction, and symmetric single-site potentials. We show that in this situation any initial decay of correlations may be bootstrapped to the same order decay as the interaction terms. In other words, $\cov(x_i,x_j)$ and $M_{ij}$ decay in $|i-j|$ of the same order.   
This is possible due to a bootstrapping principle which, while well-known for discrete spins (cf.~for example~\cite{FS82}), is very recent in the continuous spin-value setting. It was first carried out for one-dimensional lattice systems in~\cite{bblMenzNittka}. Compared to the discrete spin-value setting there are complications due to the fact that the Lebowitz inequality yields extra terms that must be controlled. Those terms are controlled by a careful iteration procedure. In this article, we refine the techniques of~\cite{bblMenzNittka} in order to generalize this technique to lattices of arbitrary dimension. 
In addition, our proof yields improved decay of correlations compared to the results of~\cite{bblMenzNittka}.

%
%

\section{Main Results}\label{s_results}

\subsection{Setting}\label{s_setting}

Fixing a finite lattice $\Lambda \subset\Z^d$, we define the formal Hamiltonian for this system as
\begin{align}
  H (x) \stackrel{\text{def}}{=} \sum_{i} \psi_i(x_i) + s_i x_i +  \sum_{i,j} M_{ij} x_i x_j, \label{d_Hamiltonian}
\end{align}
where the spin-spin interactions~$M$ are diagonally dominant, i.e.~there is a constant~$\delta>0$ such that
\begin{align}\label{e_diag_dominant}
 \delta
  + \sum_{j\neq i} |M_{ij}| \leq M_{ii},
\end{align}
holds for all $i\in\Lambda$.  We also suppose that the single site potentials $\psi_i$ are bounded perturbations of convex potentials, i.e that we may decompose $\psi_i$ as
\[
	\psi_i = \psi_{i,b} + \psi_{i,c}
\]
such that there is a constant $C_{\ref{SSPot}}$ such that for all $r\in \R$ we have
\begin{equation}\label{SSPot}
\psi_{i,c}''(r) \geq 0, ~~~~ |\psi_{i,b}(r)| + |\psi_{i,b}'(r)| \leq C_{\ref{SSPot}}.
\end{equation}
We point out that, in some sense, the single sites are strictly convex due to the $M_{ii}$ term and~\eqref{e_diag_dominant}.  If not stated otherwise, we assume that the interaction decays algebraically, e.g.
\begin{equation}\label{e_alg_decay}
	|M_{ij}| \leq \frac{C_{\ref{e_alg_decay}}}{[1 + |i-j|]^{2d+\alpha_{\ref{e_alg_decay}}}},
\end{equation}
for some constants $C_{\ref{e_alg_decay}}, \alpha_{\ref{e_alg_decay}}>0$.
Finally, we call the system ferromagnetic if
\begin{equation}\label{e_ferromagnetic}
M_{ij} \leq 0, ~~~~ \text{ for all } i\neq j.
\end{equation}

\begin{rem}\label{r_cond_ssp_yoshida}
Previous results showing the equivalence of the mixing condition, the LSI, and the PI, assumed a stronger condition on the single-site potential~$\psi$.  Namely, in these results it was assumed that for any constant~$m>0$ there is a decomposition $\psi_i = \psi_{i,b} + \psi_{i,c}$ such that that there is a constant $C$ such that for all $r\in \R$ we have
\begin{equation}\label{SSPot_Yos}
\psi_{i,c}''(r) \geq m, ~~~~ |\psi_{i,b}(r)| + |\psi_{i,b}'(r)| \leq C.
\end{equation}
In addition, it is usually assumed that the system is ferromagnetic.
See, for instance Theorem~2.1~in~\cite{bblYoshida01}.
\end{rem}

\begin{rem}
For lattice sites, $i\in \Lambda$, we use the $\ell^\infty$ norm,
\[ |i | = \max\{|i_1|, |i_2|, \dots, |i_d|\}, ~~\text{ where } i = (i_1,i_2,\dots, i_d),\]
to define distance.  We note, however, that the results stated below hold for all metrics which come from a norm on $\R^d$ as all norms on $\R^d$ are equivalent.  We abuse notation and use the symbols $|\cdot|$ to mean both the absolute value for scalar quantities and the $\ell^\infty$ norm on $\Z^d$.
\end{rem}

Given a finite lattice $\Lambda \subset \Z^d$, a set of values $\omega = (\omega_i)_{i\notin \Lambda}$, and a set of values $(s_i)_{i\in \Lambda}$, we denote the measure on $\Lambda$ given by this Hamiltonian 
as $\mu_\Lambda^\omega$.  Specifically
\begin{equation}\label{e_GibbsMeasure}
d\mu_\Lambda^\omega
	\stackrel{\text{def}}{=} \exp\left\{- \sum_{i\in\Lambda}( \psi_i(x_i) + s_i x_i) - \sum_{i,j\in \Lambda} M_{ij} x_i x_j - 2 \sum_{i \in \Lambda, j\notin \Lambda} M_{ij} x_i \omega_j\right\} \frac{dx}{Z_\Lambda^\omega},
\end{equation}
where $Z_\Lambda^\omega$ is a constant to make this a probability measure.  We call $\omega$ the boundary conditions and $s$ the external field.  In general, in the sequel, we drop the super-script notation and simply denote the measure as $\mu_\Lambda$.  In order to ensure that $\mu_\Lambda$ is well-defined, we restrict to ``well-tempered'' boundary conditions.  In other words, we only consider $\omega$ such that
\[
	\max_{i\in \Lambda}\sum_{j\notin \Lambda} |M_{ij}| |\omega_j| < \infty.
\]

We work in the following function spaces
\[\begin{split}
L^2(\mu_\Lambda) &\stackrel{\text{def}}{=} \big\{f: \Z^d \to \R ~|~ f(x) = f(y) \text{ if } x_i = y_i \text{ for all } i \in \Lambda,\\
	&~~~~~~~\text{ and } \int |f(x)|^2 d\mu_\Lambda(x) < \infty\big\},\\
H^1(\mu_\Lambda) &\stackrel{\text{def}}{=} \left\{f: \Z^d \to \R ~|~f, \nabla f \in L^2(\mu_\Lambda)\right\}
.\end{split}\]
Here, we have defined $\nabla f \stackrel{\text{def}}{=} (\nabla_i f)_{i\in\Lambda}$, where $\nabla_i$ is the derivative of $f$ at site $i$.  These spaces may be constructed in the usual way.  We often denote these simply as $L^2$ and $H^1$ when there is no chance of misinterpretation.  In practice, in this paper we always work with smooth functions whose derivatives are bounded, but we lose no generality in doing this since we may extend by density.  In any case, the typical example of a function that we care about, $f(x) = x_i$ for some site $i \in \Lambda$, falls into this class.  Lastly, we point out that the spaces $L^2(\mu_\Lambda)$ and $H^1(\mu_\Lambda)$ inherit a dependence on the boundary condition $\omega$ from $\mu_\Lambda$.

 \subsection{Main Results}\label{ss_main}

Our main goal in this paper is to generalize classical results about the equivalence of certain conditions to the setting of unbounded spin systems with algebraically decaying interactions (see in Section~\ref{s_introduction} and Theorem~\ref{p_main_result} from below). The first result states that a uniform PI yields mixing (cf.~Theorem~\ref{p_sg_dc} from below). For that purpose, let us first have a look at the different mixing conditions used in this article.\\

 We start with considering the first mixing condition, which is a statement about decay of correlations. More precisely, the mixing condition states that there are finite constants~$C_{\ref{e_stronger_mixing_condition}}>0$, $\alpha>0$, and~$\beta > 0$ such that for any finite subset~$\Lambda \subset \mathbb{Z}^d$, any boundary values $\omega$, and any functions~$f:  \Lambda  \to \mathbb{R}$ and~$g: \Lambda  \to \mathbb{R}$ with support $\supp f, \supp g \subset \Lambda$, it holds
    \begin{align}
      	|\cov_{\mu_\Lambda}(f, g)|
		& \leq C_{\ref{e_stronger_mixing_condition}} \frac{|\supp f|^\beta  \ |\supp g|^\beta}{[1+\dist(\supp f, \supp g)]^{d+ \alpha }} \   \ |\nabla f|_{L^{\infty}} \ |\nabla g|_{L^{\infty}}. \label{e_stronger_mixing_condition}
    \end{align}
Here,~$|\supp f|$ and~$|\supp g|$ denote the cardinality of the set~$\supp f$ and~$\supp g$ respectively. The condition~\eqref{e_stronger_mixing_condition} corresponds to the mixing condition (DS1) in~\cite{bblYoshida01}. \\

Now let us  introduce the second mixing condition, which is the analogue of the Dobrushin-Shlosman mixing condition in the continuous spin-value setting (see also Remark~2.2 in~\cite{bblYoshida01}). It states that there are finite constants~$C_{\ref{e_ds_1}} >0 $, $\alpha > 0$, and~$\beta > 0$ such that for any finite subset~$\Lambda \subset \mathbb{Z}^d$, and function~$f:  \supp f  \to \mathbb{R}$ with support $\supp f \subset \Lambda$, any site~$\mathbb{Z}^d \ni i \notin \Lambda$ and boundary data~$w, \tilde \omega  \in \mathbb{R}^{\mathbb{Z}^d \backslash \Lambda}$ with $\tilde \omega_k= \omega_k$ for all~$k \neq i$ it holds
    \begin{align}\label{e_ds_1}
      	\left| \int f \ d \mu_{\Lambda}^\omega   -  \int f \ d \mu_{\Lambda}^{\tilde \omega}  \right| \leq C_{\ref{e_ds_1}} \frac{ |\supp f|^\beta}{[1+\dist(\supp f, i)]^{d+ \alpha}} \ |\tilde \omega_k - \omega_k|   \  |\nabla f|_{L^{\infty}} .
 \end{align}

The following lemma shows that in our setting both mixing conditions~\eqref{e_stronger_mixing_condition} and~\eqref{e_ds_1} are  equivalent.
\begin{lem}\label{p_relation_mixing_conditions}
We assume that the single site potentials~$\psi_i$ are a bounded perturbation of a convex single-site potential in the sense of~\eqref{SSPot} and the interaction is diagonally dominant in the sense of \eqref{e_diag_dominant}. If, additionally, the interaction~$M_{ij}$ decays algebraically of order~$2d + \alpha_{\ref{e_alg_decay}}$ (cf~\eqref{e_alg_decay}), then the conditions~\eqref{e_stronger_mixing_condition} and~\eqref{e_ds_1} are equivalent.
\end{lem}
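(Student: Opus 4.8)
The two conditions are closely related, so the strategy is to prove each implication separately, with the direction \eqref{e_stronger_mixing_condition} $\Rightarrow$ \eqref{e_ds_1} being essentially soft and the converse requiring real work. For \eqref{e_stronger_mixing_condition} $\Rightarrow$ \eqref{e_ds_1}, the plan is to recognize that changing a single boundary spin $\omega_i \mapsto \tilde\omega_i$ (with $i \notin \Lambda$) amounts to adding the linear perturbation $-2(\tilde\omega_i - \omega_i)\sum_{j\in\Lambda} M_{ij} x_j$ to the Hamiltonian. Writing $g(x) = -2\sum_{j\in\Lambda}M_{ij}x_j$, one has $\int f\, d\mu_\Lambda^{\tilde\omega} - \int f\, d\mu_\Lambda^\omega = \int_0^1 \frac{d}{dt}\left(\int f\, d\mu_t\right) dt$ where $\mu_t$ interpolates the boundary condition, and the derivative is exactly a covariance $\cov_{\mu_t}((\tilde\omega_i-\omega_i)g, f)$. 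One then applies \eqref{e_stronger_mixing_condition} to $f$ and (a truncation of) $g$. The only subtlety is that $g$ has unbounded support (all of $\Lambda$), so I would split $\Lambda$ into annuli around $i$, apply \eqref{e_stronger_mixing_condition} on each annulus using the algebraic decay \eqref{e_alg_decay} of $M_{ij}$ to bound $|\nabla g|_{L^\infty}$ and $|\supp g|^\beta$ against $\dist(\supp f, i)$, and sum the resulting geometric-type series. The assumption that $M$ decays of order $2d + \alpha_{\ref{e_alg_decay}}$ (as opposed to merely $d+\alpha$) is exactly what is needed to make this summation over annuli converge while retaining a $d+\alpha$ decay in $\dist(\supp f, i)$.

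For the converse \eqref{e_ds_1} $\Rightarrow$ \eqref{e_stronger_mixing_condition}, the idea is to express the covariance $\cov_{\mu_\Lambda}(f,g)$ as a telescoping sum over the sites in $\supp g$ of single-site boundary perturbations, so that \eqref{e_ds_1} can be applied site by site. Concretely, I would enlarge the system: write $\Lambda' = \Lambda \cup \supp g$ — but since $\supp g \subset \Lambda$ already, the right move is rather to condition. Fix an ordering $j_1, \dots, j_m$ of $\supp g$ and use the identity $\cov_{\mu_\Lambda}(f,g) = \E_{\mu_\Lambda}\!\left[(g - \E g)\big(f - \E f\big)\right]$, decomposing $g - \E_{\mu_\Lambda} g$ as a martingale increment sum $\sum_k \left(\E[g\mid \mathcal F_k] - \E[g \mid \mathcal F_{k-1}]\right)$ where $\mathcal F_k$ is the $\sigma$-algebra generated by spins outside $\{j_{k+1},\dots,j_m\}$. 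Each increment, by the DLR equations, is controlled by how $\int f\, d\mu_{\Lambda \setminus \{j_k, \dots\}}^\cdot$ depends on the spin at $j_k$, which is precisely the content of \eqref{e_ds_1} once one checks that $j_k$ plays the role of the varied boundary site relative to the sub-volume. This requires the diagonal-dominance bound \eqref{e_diag_dominant} and the bounded-perturbation-of-convex structure \eqref{SSPot} to guarantee that the relevant conditional expectations $\E[x_{j_k}\mid \cdot]$ are Lipschitz in the conditioning spins with a uniform constant (a standard Brascamp–Lieb / Bakry–Émery type estimate for the one-dimensional conditional measures), so that the "$|\tilde\omega_k - \omega_k|$" factor in \eqref{e_ds_1} can be traded for a factor involving $\nabla g$.

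The main obstacle I anticipate is this last point in the converse direction: controlling the propagation of the perturbation through the martingale/telescoping decomposition without losing the algebraic rate. Each of the $|\supp g|$ increments must be estimated, and naively each carries a factor $[1+\dist(\supp f, j_k)]^{-(d+\alpha)}$; one must sum these over $j_k \in \supp g$ and show the total is bounded by $|\supp g|^\beta [1+\dist(\supp f,\supp g)]^{-(d+\alpha)}$ times a constant, which works because all $j_k$ are at distance at least $\dist(\supp f, \supp g)$ from $\supp f$ and there are at most $|\supp g|$ of them — so the bound is immediate with $\beta$ on the $g$ side, but one inherits an extra power of $|\supp g|$ and must absorb it into the exponent $\beta$ in \eqref{e_stronger_mixing_condition}. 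Controlling the Lipschitz dependence of the conditional means uniformly in the (well-tempered) boundary data, using only \eqref{e_diag_dominant} and \eqref{SSPot} rather than the stronger \eqref{SSPot_Yos}, is the technically delicate ingredient and is where the diagonal dominance constant $\delta$ enters quantitatively; I would isolate this as a preliminary lemma on one-site conditional measures and then feed it into the telescoping argument.
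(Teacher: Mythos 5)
The overall skeleton is right—interpolate the boundary condition for \eqref{e_stronger_mixing_condition} $\Rightarrow$ \eqref{e_ds_1}, and telescope single-site changes for the converse—but there are two genuine gaps. In the forward direction, your plan to ``apply \eqref{e_stronger_mixing_condition} on each annulus'' breaks down for the sites $j\in\Lambda$ with $M_{ij}\neq 0$ that lie in $\supp f$ (or near it): there $\dist(\supp f, j)$ is zero, so \eqref{e_stronger_mixing_condition} gives no decay and the annulus bound is useless. The paper splits the sum into $j\in\supp f$ and $j\notin\supp f$ and handles the first part \emph{not} by the mixing condition but by a uniform single-site variance estimate (Lemma~\ref{p_uniform moment estimate}): write $\cov_{\mu_\Lambda}(f, x_j)$ in symmetrized form $\frac12\int\int(f(y)-f(x))(y_j-x_j)$, apply the mean value theorem and Young, and reduce to $\var_{\mu_\Lambda}(x_k)$ for $k\in\supp f\cup\{j\}$, which is bounded uniformly using \eqref{e_diag_dominant} and \eqref{SSPot}. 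Without this ingredient your annulus summation does not close.

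In the converse direction your martingale decomposition is not coherent as written: you decompose $g - \E g$ into increments $\E[g\mid\mathcal F_k]-\E[g\mid\mathcal F_{k-1}]$ over spins in $\supp g$, but then assert the increment is ``controlled by how $\int f\,d\mu$ depends on the spin at $j_k$''---you have silently swapped $f$ for $g$ and left the actual estimate of the $g$-increments unaddressed. More importantly, the Brascamp--Lieb/Lipschitz estimate on one-site conditional means that you identify as the delicate core is in fact not needed. The paper symmetrizes $\cov_{\mu_\Lambda}(f,g)=\frac12\int\int(f(y)-f(x))(g(y)-g(x))\,\mu_\Lambda(dx)\mu_\Lambda(dy)$, conditions the inner $g$-expectations on the spins in $\supp f$, and telescopes those conditioning values one site at a time via an interpolation $z^k$ between $(x_i)_{\supp f}$ and $(y_i)_{\supp f}$; \eqref{e_ds_1} applied to $g$ at each step produces a factor $|y_k - x_k|$, and after the mean value theorem for $f$ and Cauchy--Schwarz the residual integrals reduce once more to $\var_{\mu_\Lambda}(x_k)$, controlled by the same Lemma~\ref{p_uniform moment estimate}. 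The only quantitative use of \eqref{e_diag_dominant} and \eqref{SSPot} is this uniform variance bound---a much softer statement than the conditional-mean Lipschitz estimate you propose as a preliminary lemma.
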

The statement that~\eqref{e_stronger_mixing_condition} yields~\eqref{e_ds_1} is well-known from the case of finite-range interaction (see, for example, the proof~$(DS1)\Rightarrow(DS2)$ in~\cite{bblYoshida01}). The statement that the Dobrushin-Shlosman mixing condition~\eqref{e_ds_1} yields the decay of correlations~\eqref{e_stronger_mixing_condition} in the case of unbounded interaction is new to our knowledge. The proof of Lemma~\ref{p_relation_mixing_conditions} is stated in Section~\ref{s_mixing_conditions_proof} below.\\


Let us now turn to the first main result of this article. It states that a uniform PI yields decay of correlations.

\begin{thm}\label{p_sg_dc}
Let the spin-spin interactions decay as in~\eqref{e_alg_decay} e.g.
\begin{equation*}
	|M_{ij}| \leq \frac{C_{\ref{e_alg_decay}}}{[1 + |i-j|]^{2d+\alpha_{\ref{e_alg_decay}}}},
\end{equation*}
for some constants $C_{\ref{e_alg_decay}}, \alpha_{\ref{e_alg_decay}}>0$.  Suppose that the finite volume Gibbs measure $\mu_\Lambda$ satisfies a uniform PI in the sense that there is a constant $\varrho_{\ref{sg}}$, independent of $\Lambda$ and the boundary condition $\omega$, such that for~$f\in H^1(\mu_\Lambda)$, we have
	\begin{equation}\label{sg}
	\var_{\mu_\Lambda}(f(x))
		\leq \frac{1}{\varrho_{\ref{sg}}} \E_{\mu_\Lambda}\left[ \sum_{x\in \Lambda} |\nabla_x f|^2\right]
	.\end{equation}
Then

\begin{align}
      	|\cov_{\mu_\Lambda}(f, g)|
		& \leq C_{\ref{e_stronger_mixing_condition_L_2}}
			\frac{ |\supp f|^{\frac{1}{2}} |\supp g|^{\frac{1}{2}} }{[1+\dist(\supp f, \supp g)]^{d+ \alpha_{\ref{e_alg_decay}}/2}}
			\left( \int |\nabla f|^2 \ d \mu_{\Lambda} \right)^{\frac{1}{2}} \left( \int |\nabla g|^2 \ d \mu_{\Lambda} \right)^{\frac{1}{2}}. \label{e_stronger_mixing_condition_L_2}
    \end{align}
In particular by Lemma \ref{p_relation_mixing_conditions}, the mixing condition~\eqref{e_stronger_mixing_condition} and~\eqref{e_ds_1} are satisfied.
\end{thm}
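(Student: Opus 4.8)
The plan is to run a perturbation-in-the-external-field argument, which is the standard way to convert a Poincaré inequality into decay of correlations, but adapted so that only the \emph{directional} structure of the gradient is used. Without loss of generality, by adding constants we may assume $\E_{\mu_\Lambda}[f] = \E_{\mu_\Lambda}[g] = 0$. Write $A = \supp f$, $B = \supp g$, and set $R = \dist(A,B)$. We introduce a one-parameter family of tilted measures $d\mu_\Lambda^{(t)} \propto e^{t f} d\mu_\Lambda$ and observe that
\[
\cov_{\mu_\Lambda}(f,g) \;=\; \int_0^1 \var_{\mu_\Lambda^{(t)}}\!\big(\text{something}\big)\,\ldots
\]
— more precisely, the classical identity $\cov_{\mu_\Lambda}(f,g) = \int_0^1 \E_{\mu_\Lambda^{(t)}}\big[(g - \E_{\mu_\Lambda^{(t)}}g)\,(f - \E_{\mu_\Lambda^{(t)}}f)\big]\,dt$ lets us reduce to bounding a covariance under a measure that is still a bounded perturbation of $\mu_\Lambda$ (here one must check that $e^{tf}$ is a legitimate bounded perturbation so that $\mu_\Lambda^{(t)}$ still satisfies a uniform PI, using that $f \in H^1$ and, after the density argument, has bounded support and bounded gradient). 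The cleaner route, which I expect the authors take, is instead to differentiate $\E_{\mu_\Lambda}[g\mid \text{field on } A \text{ shifted}]$ in the external field $s_A$ supported on $A$: one gets
\[
\cov_{\mu_\Lambda}(f,g) \;=\; -\int \Big( \sum_{i\in A} \nabla_i f \Big)\,\big(\ldots\big)\, d\mu_\Lambda,
\]
relating the covariance to the response of $g$ to a perturbation localized on $A$.

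The heart of the argument is then to estimate how the conditional expectation of $g$ (a function supported on $B$) responds to a perturbation localized on $A$, and to show this response decays like $R^{-(d+\alpha_{\ref{e_alg_decay}}/2)}$. This is where the algebraic decay \eqref{e_alg_decay} of $M_{ij}$ enters: the perturbation on $A$ propagates to $B$ only through chains of interaction bonds, and summing the bond strengths over all lattice paths from $A$ to $B$ — using that $M$ decays of order $2d+\alpha_{\ref{e_alg_decay}}$, comfortably summable in $\Z^d$ with room to spare — yields a bound that still decays algebraically, though with a degraded exponent. The factor $|\supp f|^{1/2}|\supp g|^{1/2}$ and the $L^2(\mu_\Lambda)$ gradient norms on the right-hand side (rather than $L^\infty$) are exactly what one obtains from applying Cauchy–Schwarz together with the PI \eqref{sg}: the PI controls $\var_{\mu_\Lambda}$ (and the tilted variances) by $\varrho_{\ref{sg}}^{-1}\E_{\mu_\Lambda}[\sum |\nabla_x g|^2]$, and the localization of $g$ restricts the sum to $x \in B$, contributing the cardinality factor when one passes between $\ell^1$ and $\ell^2$ on $\supp g$; symmetrically for $f$.

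I would organize the proof as follows. First, reduce to mean-zero $f,g$ with bounded support and bounded, smooth gradients by density. Second, set up the interpolation/field-perturbation identity expressing $\cov_{\mu_\Lambda}(f,g)$ as an integral over $t\in[0,1]$ of a covariance (or a gradient-weighted integral) under the tilted measures $\mu_\Lambda^{(t)}$, and verify that each $\mu_\Lambda^{(t)}$ is a bounded perturbation of $\mu_\Lambda$, hence satisfies \eqref{sg} with the same constant $\varrho_{\ref{sg}}$ (this uses \eqref{e_diag_dominant} and \eqref{SSPot} to keep the structural hypotheses stable under the tilt — one may need the Bakry–Émery/Holley–Stroock type stability of PI under bounded perturbations). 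Third — the main obstacle — prove the key propagation estimate: the influence of a perturbation supported on $A$ on the $\mu_\Lambda$-expectation of a function supported on $B$ decays like $C\,[1+R]^{-(d+\alpha_{\ref{e_alg_decay}}/2)}$, via a random-walk-on-bonds expansion controlled by \eqref{e_alg_decay}; keeping track of the loss from $2d+\alpha$ down to $d+\alpha/2$ in the path-counting, and making the estimate uniform in $\Lambda$ and $\omega$ (including well-tempered boundary terms), is the delicate part. Fourth, assemble: apply Cauchy–Schwarz and the PI to turn the variances into $L^2(\mu_\Lambda)$ gradient norms, pick up the $|\supp f|^{1/2}$ and $|\supp g|^{1/2}$ factors from the $\ell^1$–$\ell^2$ passage over the supports, and integrate over $t$. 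Finally, the last sentence is immediate from Lemma \ref{p_relation_mixing_conditions}: \eqref{e_stronger_mixing_condition_L_2} implies \eqref{e_stronger_mixing_condition} (bounding the $L^2$ gradient norms by $|\supp f|^{1/2}|\nabla f|_{L^\infty}$ etc.), which by Lemma \ref{p_relation_mixing_conditions} is equivalent to \eqref{e_ds_1}.
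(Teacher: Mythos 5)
Your proposal is a plan, not a proof, and the central step is missing. The paper's proof takes a genuinely different route from what you sketch: it represents the covariance via the Helffer--Sj\"ostrand (Witten Laplacian) formula $\cov_{\mu_\Lambda}(f,g)=\int \nabla\varphi\cdot\nabla g\,d\mu_\Lambda$, where $\varphi$ solves $-L\varphi=f-\int f\,d\mu_\Lambda$, then applies Cauchy--Schwarz to reduce to a pointwise bound on $\int|\nabla_i\varphi|^2\,d\mu_\Lambda$ for $i\in\supp g$ (the \emph{directional Poincar\'e inequality}, Theorem~\ref{p_directional_poincare}). That inequality is the actual engine, and it is proved by a block-averaging scheme: an integration-by-parts identity plus the uniform PI give a block estimate, one then averages the block estimate over all translates of the block (to blur the block boundary so the bulk term of order $R^{d}$ dominates the boundary interaction of order $R^{d-1}$), and finally one inverts a diagonally dominant matrix on the coarse lattice $2R\Z^d$ to get the pointwise decay.

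The gap in your sketch is exactly at the point you label ``the main obstacle''. You propose to prove the propagation estimate by ``summing the bond strengths over all lattice paths from $A$ to $B$''. This cannot work as stated, and for a reason the paper explicitly flags in its discussion of Laroche's and Yoshida's finite-speed-of-propagation arguments. First, the hypotheses of Theorem~\ref{p_sg_dc} bound only the \emph{decay} of $M_{ij}$, not its size: $C_{\ref{e_alg_decay}}$ and $\max_i\sum_{j\ne i}|M_{ij}|$ can be arbitrarily large, so a bare random-walk-on-bonds expansion has no small parameter and need not converge. The only quantity in the hypotheses that can play the role of a small parameter is the Poincar\'e constant $\varrho_{\ref{sg}}$, and you have given no mechanism by which $\varrho_{\ref{sg}}$ enters to tame the large off-diagonal $M_{ij}$; in the paper that mechanism is precisely the averaging argument that produces diagonal dominance on the block level. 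Second, even granting convergence, a path expansion with algebraically decaying steps gives a decay exponent that degrades with the number of steps, yielding algebraic decay of an uncontrollably small order --- not the stated $d+\alpha_{\ref{e_alg_decay}}/2$. You acknowledge the exponent bookkeeping as ``the delicate part'' but supply no argument, and indeed that bookkeeping is exactly what fails for this approach.

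Two smaller remarks: the cardinality factor $|\supp f|^{1/2}$ does not come from an $\ell^1$--$\ell^2$ passage on the support of $f$, but from the numerator $|\supp f|$ in the directional Poincar\'e bound $\int|\nabla_i\varphi|^2\,d\mu_\Lambda\lesssim |\supp f|\,(1+\dist(i,\supp f))^{-(2d+\alpha_{\ref{e_alg_decay}})}\int|\nabla f|^2\,d\mu_\Lambda$ after taking a square root; the $|\supp g|^{1/2}$ then comes from the Cauchy--Schwarz sum over $i\in\supp g$. And the tilt $d\mu_\Lambda^{(t)}\propto e^{tf}d\mu_\Lambda$ is not needed at all; the Helffer--Sj\"ostrand representation is the natural (and the paper's) starting point. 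Your concluding sentence, that~\eqref{e_stronger_mixing_condition_L_2} together with Lemma~\ref{p_relation_mixing_conditions} gives~\eqref{e_stronger_mixing_condition} and~\eqref{e_ds_1}, is correct and matches the paper.
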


The proof of Theorem~\ref{p_sg_dc} is carried out in Section~\ref{s_sg_dc}. 
  \begin{rem}
    We want to point out that for the proof of Theorem~\ref{p_sg_dc}, we neither need the assumptions~\eqref{SSPot} on the single-site potentials~$\psi_i$ nor the assumption~\eqref{e_diag_dominant} on the diagonal dominance of the interaction~$M_{ij}$. We use the diagonal dominance simply to guarantee that $Z_\mu^\omega$ is finite in~\eqref{e_GibbsMeasure}.  The only assumptions we use in the course of the proof are the uniform PI in the sense of~\eqref{sg} and the decay~\eqref{e_alg_decay} of the interaction~$M_{ij}$.
  \end{rem}

\begin{rem}[General interactions I]\label{r_general_interaction_I}
The argument for Theorem~\ref{p_sg_dc} does not need that the interactions are purely quadratic two-body interactions and could easily be adapted to more general interactions. The only assumption on the interactions that is needed is that the mixed derivatives~$\nabla_i \nabla_j H$ of the Hamiltonian~$H$ satisfy the bound
\begin{align}\label{e_general_interaction_condition}
  	\sup_x |\nabla_i\nabla_j H(x)| \leq \frac{C_{\ref{e_alg_decay}}}{[1 + |i-j|]^{2d+\alpha_{\ref{e_alg_decay}}}}.
\end{align}
\end{rem}\bigskip

Now, we turn to the second step in our cycle of equivalences, namely that mixing yields a uniform LSI. For this we do not need that the mixing condition~\eqref{e_stronger_mixing_condition} is satisfied for all functions~$f$ and~$g$, but only for the functions~$f(x)=x_i$ and~$g(x)=x_j$ for all sites~$i, j \in \Lambda$. That decay of spin correlations implies a uniform LSI follows from the work in~\cite{bblMenz13}:


\begin{thm}[Theorem~1.7,~\cite{bblMenz13}]\label{p_otto_rez_rev}
Assume that the single-site potentials~$\psi_i$ are a bounded perturbation of a convex potential in the sense of~\eqref{SSPot} and that the interaction~$M_{ij}$ is diagonally dominant in the sense of~\eqref{e_diag_dominant}. Let the spin-spin interactions and the spin-spin correlations decay as
\begin{align}
	|M_{ij}| &\leq \frac{C_{\ref{e_menz}}}{[1 + |i-j|]^{d+\alpha_{\ref{e_menz}}}},\text{ and} \notag \\
	\cov_{\mu_\Lambda}(x_i,x_j) &\leq \frac{C_{\ref{e_menz}}}{[1 + |i-j|]^{d+\alpha_{\ref{e_menz}}}} \label{e_menz}
\end{align}
for some constants $C_{\ref{e_menz}}, \alpha_{\ref{e_menz}}>0$.  Then the finite volume Gibbs measures $\mu_\Lambda$ satisfy a uniform LSI in the sense that there is a constant $\rho_{\ref{e_lsi}}>0$, independent of $\Lambda$ and the boundary condition $\omega$, such that
\begin{equation}\label{e_lsi}
	\int f \log\left(\frac{f}{\int f d\mu_\Lambda}\right)d\mu_\Lambda \leq \frac{1}{2\rho_{\ref{e_lsi}}} \int \frac{|\nabla f|^2}{f} d\mu_\Lambda,
\end{equation}
for all smooth $f\geq 0$.
\end{thm}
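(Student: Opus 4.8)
This statement is Theorem~1.7 of~\cite{bblMenz13}, so nothing genuinely new needs to be proved; I only sketch the strategy of that reference. The proof runs through the two-scale criterion for the logarithmic Sobolev inequality of Otto and Reznikoff~\cite{OR07}, applied iteratively over a hierarchy of length scales. First I would fix a coarse-graining: tile $\Lambda$ by disjoint cubes $B_1,\dots,B_p$ of side length at most $K$ and introduce the macroscopic variable $\eta=(\eta_1,\dots,\eta_p)$ with $\eta_\ell = |B_\ell|^{-1}\sum_{i\in B_\ell} x_i$. This splits $\mu_\Lambda$ into the conditional (``microscopic'') measures $\mu_\Lambda(dx\mid\eta)$, supported on the fibers $\{x : \bar x = \eta\}$, and the marginal (``macroscopic'') measure $\bar\mu_\Lambda(d\eta)$. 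The two-scale criterion asserts: if the microscopic measures satisfy an LSI with a constant $\varrho_{\mathrm{mic}}$ uniform in $\eta,\Lambda,\omega$; if $\bar\mu_\Lambda$ satisfies an LSI with a uniform constant $\varrho_{\mathrm{mac}}$; and if a certain coupling term, built from the off-diagonal part of $\Hess H$ and the covariances of the microscopic measures, is sufficiently small; then $\mu_\Lambda$ satisfies an LSI with a constant bounded below in terms of $\varrho_{\mathrm{mic}}$, $\varrho_{\mathrm{mac}}$ and that coupling.

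The microscopic LSI is the easy input. Conditionally on $\eta$, the fiber measure is a bounded perturbation (by $\sum_i\psi_{i,b}$, whose oscillation is at most $2C_{\ref{SSPot}}$ per site, hence at most $2C_{\ref{SSPot}}K^d$ on a block) of a measure whose Hamiltonian, restricted to the tangent space of the fiber, has Hessian $\succeq 2\delta\,\id$: indeed $\Hess\bigl(\sum_{i}\psi_{i,c}(x_i)+\sum_{i,j}M_{ij}x_ix_j\bigr)\succeq 2\delta\,\id$ by $\psi_{i,c}''\ge 0$ from~\eqref{SSPot} and diagonal dominance~\eqref{e_diag_dominant} (Gershgorin, which passes to submatrices). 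The Bakry--\'Emery criterion together with the Holley--Stroock perturbation lemma then gives $\varrho_{\mathrm{mic}}\ge c\,\delta\,e^{-2C_{\ref{SSPot}}K^d}$ for a universal $c>0$, which is uniform in $\Lambda,\omega,\eta$ once $K$ is fixed.

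The substance of the argument is the macroscopic LSI, obtained by induction over coarse-graining scales. One shows that $\bar\mu_\Lambda$ is again, after renormalization, a Gibbs measure for an effective Hamiltonian on $\eta$ whose single-site part is again a bounded perturbation of a uniformly strictly convex potential, whose effective two-body interaction still decays algebraically of order $d+\alpha_{\ref{e_menz}}$, and whose effective spin--spin covariances again obey the bound~\eqref{e_menz}; the induction hypothesis can then be reapplied at the next scale. The role of~\eqref{e_menz} is precisely to control the two-scale coupling term, which is dominated by a bilinear expression in $\bigl(|M_{ij}|\bigr)$ and $\bigl(\cov_{\mu_\Lambda}(x_i,x_j\mid\eta)\bigr)$; the algebraic decay of both factors, with exponent $d+\alpha_{\ref{e_menz}}>d$ and hence summable over $\Z^d$, makes this term manageable and, at coarse scales, small. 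I expect this renormalization step --- verifying that the class of admissible measures (bounded perturbation of a strictly convex single-site potential, algebraically decaying interaction and covariances) is stable under coarse-graining, with constants that do not blow up over finitely many scales --- to be the main obstacle; it is here that the quantitative hypothesis~\eqref{e_menz} is indispensable and where restricting to a purely quadratic two-body interaction is convenient (cf.~Remark~\ref{r_general_interaction_II}).

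Finally, combining a uniform microscopic LSI, a uniform macroscopic LSI, and a suitably small coupling term via finitely many applications of the Otto--Reznikoff two-scale criterion yields the uniform LSI~\eqref{e_lsi} for $\mu_\Lambda$ with $\rho_{\ref{e_lsi}}$ independent of $\Lambda$ and $\omega$, as claimed.
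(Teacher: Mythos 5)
You are right that the paper itself contains no proof of this statement: Theorem~\ref{p_otto_rez_rev} is imported verbatim from \cite{bblMenz13} (Theorem~1.7 there), so deferring to that reference is exactly what the authors do, and there is no internal argument to compare your sketch against. To that extent your proposal is consistent with the paper.

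As a sketch of the cited proof, however, it should not be taken at face value. The scheme you describe --- block averages $\eta$, a fiber/marginal splitting, and an induction over scales in which the coarse-grained marginal is claimed to be again a Gibbs measure of the same class (perturbed-convex single-site potentials, two-body interactions and covariances decaying of order $d+\alpha_{\ref{e_menz}}$) --- is the coarse-graining two-scale method of \cite{GORV}, not the route of \cite{OR07,bblMenz13}. The latter is a hierarchical criterion formulated in the original spin variables: one needs a uniform LSI for conditional measures of the original spins together with positive definiteness of an interaction matrix, and the point of \cite{bblMenz13} (``Otto--Reznikoff revisited'', cf.\ Remark~\ref{r_general_interaction_II}, which is also why purely quadratic two-body interactions are required) is that this matrix can be controlled through the covariance bound \eqref{e_menz} rather than through Hessian bounds. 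The renormalization step you yourself flag as the main obstacle is indeed a genuine gap: stability of the Gibbsian class under block averaging, with effective two-body interactions again decaying algebraically, is an RG-type statement that is neither proved in the cited work nor needed by it, and nothing in the present paper supports it. A smaller but real issue is the ``microscopic'' step: the fiber measure $\mu_\Lambda(dx\mid\eta)$ does not factorize over blocks, so applying Holley--Stroock to the perturbation $\sum_i\psi_{i,b}$ on a whole fiber costs a factor $e^{C|\Lambda|}$, not $e^{CK^d}$; in this line of work the uniform conditional LSI is obtained site-wise, where \eqref{SSPot} and \eqref{e_diag_dominant} give a one-dimensional potential that is an $O(1)$ perturbation of a uniformly convex one. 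None of this affects the paper, which simply cites the result, but your proposal is neither a proof nor an accurate account of the one it points to.
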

\noindent We note that the inequality~\eqref{e_lsi} is referred to as the LSI.\\

\begin{rem}[General interactions II]\label{r_general_interaction_II}
The argument of~\cite{bblMenz13} to deduce Theorem~\ref{p_otto_rez_rev} uses at several steps that the interactions are of a purely quadratic two-body type. It would be very interesting to know how the argument of~\cite{bblMenz13} could be adapted to general interactions satisfying a similar bound as~\eqref{e_general_interaction_condition}.
\end{rem}

Finally, that the uniform LSI inequality implies the uniform PI is a straightforward linearization argument (see for example~\cite{bblLedoux}).  Hence, a combination of Theorem~\ref{p_sg_dc} and Lemma~\ref{p_relation_mixing_conditions} shows the equivalence of the three conditions discussed in Section~\ref{s_introduction}, which is summarized in the following statement.

\begin{thm}\label{p_main_result}
 Assume that the single-site potentials~$\psi_i$ are a bounded perturbation of a convex potential in the sense of~\eqref{SSPot}. Additionally, assume that that the interaction~$M_{ij}$ is diagonally dominant in the sense of~\eqref{e_diag_dominant} and the interactions~$M_{ij}$ decay algebraically of order~$2d+ \alpha_{\ref{e_alg_decay}}$ (cf.~\eqref{e_alg_decay}). \newline
Then the following are equivalent:
\begin{enumerate}
\item The finite volume Gibbs measures $\mu_\Lambda$ satisfy a uniform LSI  in the sense of~\eqref{e_lsi}.
\item The finite volume Gibbs measures $\mu_\Lambda$ satisfy a uniform PI  in the sense of~\eqref{sg}.
\item The decay of correlations condition~\eqref{e_stronger_mixing_condition} is satisfied i.e.~correlations decay algebraically of order~$d+ \alpha$ for some $\alpha>0$.
\item The Dobrushin-Shlosman mixing condition~\eqref{e_ds_1} is satisfied.
\item The spin-spin correlations decay algebraically of order~$d+ \alpha$ for some $\alpha>0$ (cf.~\eqref{e_menz}).
\end{enumerate}
\end{thm}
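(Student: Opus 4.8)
The plan is to establish a cycle of implications among the five conditions whose induced directed graph is strongly connected; since all the analytic content is already contained in Theorem~\ref{p_sg_dc}, Lemma~\ref{p_relation_mixing_conditions}, and Theorem~\ref{p_otto_rez_rev}, the proof is essentially an assembly argument. Concretely, I would prove $(1)\Rightarrow(2)\Rightarrow(3)\Leftrightarrow(4)$ and, separately, $(3)\Rightarrow(5)\Rightarrow(1)$. One then checks directly that from each of the five nodes every other node is reachable along these arrows (e.g. from $(5)$: $(5)\Rightarrow(1)\Rightarrow(2)\Rightarrow(3)\Rightarrow(4)$; from $(4)$: $(4)\Rightarrow(3)$ and then as from $(3)$), so all five statements are equivalent.

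For $(1)\Rightarrow(2)$ I would use the classical linearization of the LSI: applying~\eqref{e_lsi} to $1+\eps f$ and expanding to order $\eps^2$ recovers~\eqref{sg} with a comparable constant, uniformly in $\Lambda$ and $\omega$ (see~\cite{bblLedoux}). The implication $(2)\Rightarrow(3)$ is exactly Theorem~\ref{p_sg_dc}: its hypothesis on the interaction is~\eqref{e_alg_decay}, which is assumed here, and its conclusion~\eqref{e_stronger_mixing_condition_L_2} gives~\eqref{e_stronger_mixing_condition} with $\beta=1$ and $\alpha=\alpha_{\ref{e_alg_decay}}/2>0$, upon bounding $\int|\nabla f|^2\,d\mu_\Lambda \le |\supp f|\,|\nabla f|_{L^\infty}^2$ and similarly for $g$. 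The equivalence $(3)\Leftrightarrow(4)$ is Lemma~\ref{p_relation_mixing_conditions}, whose hypotheses~\eqref{SSPot}, \eqref{e_diag_dominant}, and~\eqref{e_alg_decay} are all in force. For $(3)\Rightarrow(5)$ I would specialize~\eqref{e_stronger_mixing_condition} to the linear observables $f(x)=x_i$, $g(x)=x_j$, so that $|\supp f|=|\supp g|=1$, $|\nabla f|_{L^\infty}=|\nabla g|_{L^\infty}=1$, and $\dist(\supp f,\supp g)=|i-j|$, yielding $|\cov_{\mu_\Lambda}(x_i,x_j)|\le C_{\ref{e_stronger_mixing_condition}}\,[1+|i-j|]^{-(d+\alpha)}$, which is the spin-spin decay required in~\eqref{e_menz}. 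Finally, $(5)\Rightarrow(1)$ is Theorem~\ref{p_otto_rez_rev}: setting $\alpha_{\ref{e_menz}}:=\min\{\alpha,\, d+\alpha_{\ref{e_alg_decay}}\}>0$, both required decays hold with the common exponent $d+\alpha_{\ref{e_menz}}$, since $M_{ij}$ decays of order $2d+\alpha_{\ref{e_alg_decay}}=d+(d+\alpha_{\ref{e_alg_decay}})\ge d+\alpha_{\ref{e_menz}}$ by~\eqref{e_alg_decay} and $\cov_{\mu_\Lambda}(x_i,x_j)$ decays of order $d+\alpha\ge d+\alpha_{\ref{e_menz}}$ by $(5)$; together with~\eqref{SSPot} and~\eqref{e_diag_dominant} this yields the uniform LSI~\eqref{e_lsi}.

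The only point requiring genuine care is the bookkeeping of decay exponents and of the polynomial $|\supp f|$-factors: one must verify that the exponent produced at each step is admissible as input to the next---in particular the factor-of-two loss $\alpha_{\ref{e_alg_decay}}\mapsto\alpha_{\ref{e_alg_decay}}/2$ in Theorem~\ref{p_sg_dc}, and the requirement in Theorem~\ref{p_otto_rez_rev} that $M_{ij}$ and $\cov_{\mu_\Lambda}(x_i,x_j)$ decay with the \emph{same} exponent (handled by passing to the minimum)---and that the cardinality prefactors are harmless when restricting to linear observables. Since each of these checks goes through, there is no substantive obstacle; the theorem follows as a corollary of the three results stated above together with the standard LSI-to-PI linearization.
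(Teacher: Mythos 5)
Your proposal is correct and matches the paper's own argument: the paper assembles the equivalence from exactly the same cycle, using Theorem~\ref{p_sg_dc} for $(2)\Rightarrow(3)$, Lemma~\ref{p_relation_mixing_conditions} for $(3)\Leftrightarrow(4)$, specialization to $f(x)=x_i$, $g(x)=x_j$ for $(3)\Rightarrow(5)$, Theorem~\ref{p_otto_rez_rev} for $(5)\Rightarrow(1)$, and the standard linearization for $(1)\Rightarrow(2)$. Your explicit bookkeeping of the exponents (the $\alpha/2$ loss in Theorem~\ref{p_sg_dc}, the $|\supp f|$ prefactors, and matching $\alpha_{\ref{e_menz}}$ via the minimum) is the right level of care and is consistent with what the paper leaves implicit.
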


We want to note that our result holds up to an algebraic decay of the interaction~$M_{ij}$ of the order~$2d+\alpha_{\ref{e_alg_decay}}$. This is consistent with the result for compact spin spaces by Laroche~\cite{La95}, which needs the same order of decay. With high certainty, this is the best possible order of decay in one dimensional spin systems. For continuous spin systems one expects to have the same phase transition at a decay of the order~$2$ as in the Ising model (\cite{CaFeMePr,Dyson, FroeSpenc,Imbrie,Ruelle_1}). This phase transition is not yet established for continuous spins. However, R.~Nittka together with one of the authors deduced in~\cite{bblMenzNittka} the decay of correlation provided the interaction decays like~$2+ \alpha$, which together with the equivalence result of this paper directly yields the existence of a uniform LSI. This extends results of Chazottes~\cite{CCR08} on the validity of a uniform PI in one dimension.\\

An algebraic decay of the interaction~$M_{ij}$ of the order~$2d+\alpha$ for some $\alpha>0$ is not necessary for the validity of a uniform LSI. For example, as long as the interaction terms $M_{ij}$ are summable and diagonally dominant, in the sense of Definition~\ref{e_diag_dominant}, a simple application of~\cite[Theorem 1]{OR07} or of~\cite[Theorem~2]{Ma13} yields a uniform LSI for small enough interaction strength~$\max_i \sum_{j}|M_{ij}| \ll 1$.  We need the decay of the interaction~$M_{ij}$ of the order~$2d+\alpha$ to ensure the summability of certain terms (see also the comment after Lemma~\ref{p_site_estimate}).      \\

Section~\ref{s_sg_dc} is devoted to the proof of Theorem~\ref{p_sg_dc}.  The proof of Theorem~\ref{p_sg_dc} is not at all obvious; indeed, existing techniques all break down because of the relative strength of the interactions at long range due to the algebraic decay of the $M_{ij}$ terms. We develop a new approach as follows.  The first step is to use the Witten Laplacian (see~\cite{HelfferSjostrand, Sjostrand}) to recast the problem. Then in the second step, we deduce the decay of correlations via a directional Poincar\'e inequality (see Theorem~\ref{p_directional_poincare} and~\cite{bblMenz14}). Unfortunately, we cannot use the same technique to deduce the directional Poincar\'e inequality as in~\cite{bblMenz14} because it requires the off-diagonal interaction terms to be small compared to the constant in the Poincar\'e inequality, which we do not assume.  To overcome this obstacle, we employ a geometric averaging technique loosely inspired by the work of~\cite{bblBodineauHelffer}. The idea behind the averaging is to coarse grain the system such that we obtain a matrix inequality on blocks.  In order, for this to be successful, we have to ``blur'' the effect of the boundary between any two blocks by averaging over different choices of coarse graining (see Figure \ref{f_average}).\\

Let us now briefly remark on the applicability of the techniques of previous works to the proof of Theorem~\ref{p_sg_dc}. The argument of~\cite{La95} and~\cite{bblYoshida01} are based on the finite speed of propagation property of the Glauber dynamics of the system to obtain the analogue of Theorem~\ref{p_sg_dc} for finite range interactions. This argument obtains, from finite range interactions, an exponential decay of correlations.  In fact, this technique could be applied to interactions which decay algebraically, but would necessarily give algebraic decay of correlations of an uncontrollably small order. Since arguments which prove the log-Sobolev inequality from decay of correlations require the summability of the correlations, this is not enough to close the loop of equivalences in our setting.
There is another strategy, due to Bodineau and Helffer, that has been used to prove decay of correlations from the Poincar\'e inequality~\cite{bblBodineauHelffer}.  They use the Witten Laplacian and an averaging technique to deduce decay of correlations through a weighted $L^2$ estimate for the Green's function.  Their approach, applied to algebraically decaying interactions, fails for similar reasons as Yoshida's approach.  The order of decay of the correlations that their strategy obtains depends on balancing a number of parameters, and an application of this strategy, in our setting, does not necessarily yield decay of a high enough order to obtain summability of the correlations. Our approach is closer to that of Bodineau and Helffer but differs significantly on a technical level (see Section~\ref{ss_witten}).\\

We now formulate a conjecture, which is partially answered by Theorem~\ref{p_main_result}.
\begin{conj}\label{conjecture}
Given a Hamiltonian, $H$, with associated Gibbs measure $\mu_{\Lambda}$, satisfying the conditions in Section~\ref{s_setting}, consider a Hamiltonian, $H_{\text{fer}}$, with associated Gibbs measure $\mu_{\Lambda,\text{fer}}$, where the single-site potentials of $H$ and $H_{\text{fer}}$ are the same but where interaction terms of $H_{\text{fer}}$ are given by $-|M_{ij}|$ when $i\neq j$.  Suppose that $\mu_{\text{fer}}$ satisfies a LSI with constant $\varrho_{\text{fer}}$.  Then $\mu_{\Lambda}$ satisfies a LSI with constant $\varrho \geq \varrho_{\text{fer}}$.
\end{conj}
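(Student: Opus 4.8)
The plan is to run the hypothesis through the cycle of equivalences of Theorem~\ref{p_main_result} and to bridge the two measures by a Lebowitz-type correlation comparison; since the equivalences do not track constants sharply, this route delivers only the \emph{qualitative} conclusion that $\mu_\Lambda$ satisfies \emph{some} uniform LSI, and recovering the sharp bound $\varrho \geq \varrho_{\text{fer}}$ is the main obstacle, which is why only a partial answer (Corollary~\ref{c_conjecture}) is obtained. \textbf{Step 1.} By assumption $\mu_{\Lambda,\text{fer}}$ satisfies a uniform LSI~\eqref{e_lsi}; linearizing gives a uniform PI~\eqref{sg}, and applying Theorem~\ref{p_sg_dc} to $\mu_{\Lambda,\text{fer}}$ with the test functions $f(x)=x_i$, $g(x)=x_j$ yields the algebraic decay of the spin--spin correlations,
\begin{equation*}
  \left|\cov_{\mu_{\Lambda,\text{fer}}}(x_i,x_j)\right| \leq \frac{C}{[1+|i-j|]^{d+\alpha_{\ref{e_alg_decay}}/2}},
\end{equation*}
uniformly in $\Lambda$ and $\omega$, with $C$ depending only on $\varrho_{\text{fer}}$ and the data of the problem.

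\textbf{Step 2 (the heart of the argument).} The key point is the comparison
\begin{equation}\label{e_lebowitz_comparison}
  \left|\cov_{\mu_\Lambda}(x_i,x_j)\right| \leq \cov_{\mu_{\Lambda,\text{fer}}}(x_i,x_j), \qquad i,j\in\Lambda.
\end{equation}
The natural tool is the Helffer--Sjöstrand representation $\cov_{\mu_\Lambda}(x_i,x_j) = \E_{\mu_\Lambda}\big[(\mathcal{A}_H^{-1}e_j)_i\big]$, where $\mathcal{A}_H$ is the Witten Laplacian on $1$-forms associated with $H$, and likewise for $H_{\text{fer}}$. Since the single-site terms and the diagonal coefficients of $\Hess H$ are unchanged while the off-diagonal coefficients satisfy $(\Hess H_{\text{fer}})_{k\ell} = -(|M_{k\ell}|+|M_{\ell k}|) \leq -|(\Hess H)_{k\ell}|$ for $k\neq\ell$, the operator $\mathcal{A}_{H_{\text{fer}}}$ dominates $\mathcal{A}_H$ in the sense of $Z$-matrices. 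Expanding the resolvents as convergent random-walk (Neumann) series around their diagonal parts — convergence being guaranteed by the diagonal dominance~\eqref{e_diag_dominant} and the convexity~\eqref{SSPot} — and comparing term by term gives~\eqref{e_lebowitz_comparison}. When the single-site potentials are symmetric this is precisely the Lebowitz inequality; in the general case the delicate point is that the two measures and the drifts of $\mathcal{A}_H$ and $\mathcal{A}_{H_{\text{fer}}}$ also differ, so one must run the comparison at the level of the two-particle random-walk representation of the covariance (or use a Brascamp--Lieb type bound) rather than a naive matrix manipulation.

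\textbf{Step 3 (close the loop).} Combining Step~1 with~\eqref{e_lebowitz_comparison} shows that the spin--spin correlations of $\mu_\Lambda$ decay algebraically of order $d+\alpha_{\ref{e_alg_decay}}/2$; together with the decay~\eqref{e_alg_decay} of $M_{ij}$ of order $2d+\alpha_{\ref{e_alg_decay}}$, the hypotheses of Theorem~\ref{p_otto_rez_rev} (equivalently, the implication $(5)\Rightarrow(1)$ of Theorem~\ref{p_main_result}) are met, so $\mu_\Lambda$ satisfies a uniform LSI with \emph{some} constant $\varrho>0$. This establishes the conjecture up to the value of the constant.

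\textbf{Main obstacle.} The LSI constant produced by Theorem~\ref{p_otto_rez_rev} depends on the correlation-decay constants through the two-scale machinery of~\cite{bblMenz13} in a quantitatively very lossy way, so it cannot be compared to $\varrho_{\text{fer}}$. Proving $\varrho \geq \varrho_{\text{fer}}$ directly would seem to require either a monotonicity principle for the LSI constant under the map $M_{ij}\mapsto -|M_{ij}|$, or a coupling of the two Glauber dynamics; as noted in the introduction such couplings control the spectral gap but not the LSI, which is precisely why the full conjecture remains open.
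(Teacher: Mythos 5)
The statement you were asked to prove is labeled as a \emph{conjecture} in the paper, and the paper itself explicitly states that it has no proof of it, even in the weaker spectral-gap setting; there is therefore no proof of record to compare against. You have correctly recognized this: your argument establishes only the qualitative conclusion that $\mu_\Lambda$ satisfies a uniform LSI with \emph{some} positive constant, which is exactly the content of Corollary~\ref{c_conjecture}, not the sharp inequality $\varrho \geq \varrho_{\text{fer}}$ that Conjecture~\ref{conjecture} asserts. Your Steps~1 and~3 reproduce the paper's proof of that corollary in the same order: linearize the ferromagnetic LSI to a PI, apply Theorem~\ref{p_sg_dc} with $f(x)=x_i$, $g(x)=x_j$ to obtain the algebraic decay of $\cov_{\mu_{\Lambda,\text{fer}}}(x_i,x_j)$, transfer this decay to $\mu_\Lambda$ via the correlation comparison, and close the loop with Theorem~\ref{p_otto_rez_rev}. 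Your identification of the main obstacle --- the quantitative lossiness of the two-scale machinery behind Theorem~\ref{p_otto_rez_rev} and the absence of a monotonicity or coupling principle for the LSI constant --- matches the paper's own discussion of why the full conjecture remains open.

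Where your write-up genuinely departs from the paper is Step~2. The paper does not re-derive the comparison $|\cov_{\mu_\Lambda}(x_i,x_j)| \leq \cov_{\mu_{\Lambda,\text{fer}}}(x_i,x_j)$; it simply invokes Lemma~\ref{p_cor_dom}, a Ginibre-type statement quoted from~\cite{bblMenzNittka}. Your sketch via the Helffer--Sjöstrand representation and a term-by-term Neumann-series comparison is not a complete argument. The two Witten Laplacians $\mathcal{A}_H$ and $\mathcal{A}_{H_{\text{fer}}}$ act on vector fields over \emph{different} Gibbs measures with \emph{different} drifts $\nabla H$ and $\nabla H_{\text{fer}}$, so pointwise domination of the Hessian off-diagonals does not by itself allow you to compare $\E_{\mu_\Lambda}\big[(\mathcal{A}_H^{-1}e_j)_i\big]$ with $\E_{\mu_{\Lambda,\text{fer}}}\big[(\mathcal{A}_{H_{\text{fer}}}^{-1}e_j)_i\big]$, since both the inverse operators and the reference measures change simultaneously; you flag this issue yourself but do not resolve it. The gap is harmless in the end because the inequality is available as a cited lemma, but you should not present it as established by the sketch. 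Citing Lemma~\ref{p_cor_dom} directly, as the paper does, makes the argument both complete and shorter.
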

There are a number of facts indicating that this conjecture should be true.  Heuristically, the ferromagnetic situation is the worst case scenario for phase transitions.  For instance, the correlations of $\mu_{\Lambda}$ are always bounded from above by the correlations of $\mu_{\Lambda,\text{fer}}$.
\begin{lem}[Lemma~2.1, \cite{bblMenzNittka})]\label{p_cor_dom}
  Let $\mu_{\Lambda}$ and $\mu_{\Lambda,\text{fer}}$ be given as in Conjecture~\ref{conjecture}.  Then for any sites $i,j \in \Lambda$ we have
\[
	|\cov_{\mu_{\Lambda}}(x_i, x_j)| \leq \cov_{\mu_{\Lambda,\text{fer}}}(x_i,x_j).
\]
\end{lem}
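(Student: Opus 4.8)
The plan is to establish the inequality by a coupling/interpolation argument in the interaction parameter, exploiting the Griffiths-type structure of the ferromagnetic comparison. First I would fix the lattice $\Lambda$, the external field $s$, and the well-tempered boundary condition $\omega$, and view both $M_{ij}$ and $-|M_{ij}|$ as lying on a common one-parameter family. Concretely, introduce for $t \in [0,1]$ the Hamiltonian $H_t$ with the same single-site potentials $\psi_i$ and with off-diagonal interactions $M_{ij}(t) = -|M_{ij}| + t(M_{ij} + |M_{ij}|) = -(1-t)|M_{ij}| + tM_{ij}$, so that $H_0 = H_{\text{fer}}$ and $H_1 = H$; the diagonal terms $M_{ii}$ are left untouched, which preserves diagonal dominance~\eqref{e_diag_dominant} uniformly in $t$, so each $\mu_{\Lambda,t}$ is a well-defined probability measure. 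The key observation is that $\frac{d}{dt} M_{ij}(t) = M_{ij} + |M_{ij}|$ is nonnegative, and vanishes exactly on the ``ferromagnetic'' bonds; thus along this path the system only ever becomes \emph{less} ferromagnetic, and one expects each $|\cov_{\mu_{\Lambda,t}}(x_i,x_j)|$ to be monotone in an appropriate sense.

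The main step is a gauge transformation to reduce to a genuinely ferromagnetic system and then apply the Griffiths (GKS/FKG-type) inequalities. The standard trick here is the spin flip $x_k \mapsto \sigma_k x_k$ for a choice of signs $\sigma_k \in \{\pm 1\}$: under such a change of variables the single-site potentials are unaffected only if the $\psi_i$ are even, which is \emph{not} assumed here, so instead I would argue directly at the level of covariances without needing the full correlation inequalities. Differentiating, for a fixed pair $i\neq j$, one has the identity
\[
\frac{d}{dt}\,\cov_{\mu_{\Lambda,t}}(x_i,x_j)
= -\sum_{k\neq \ell} (M_{k\ell}+|M_{k\ell}|)\,\big[\text{third/fourth joint cumulants of } x_i,x_j,x_k,x_\ell \big],
\]
which is the route I would take if the cited proof in~\cite{bblMenzNittka} proceeds analytically; however, since the statement is quoted as Lemma~2.1 of that paper, the cleaner path is almost certainly the following two-measure coupling. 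Build a monotone coupling of $\mu_{\Lambda}$ and $\mu_{\Lambda,\text{fer}}$: by diagonal dominance both Hamiltonians are log-concave perturbations, and one compares the two by a reflection/synchronous coupling of the associated Langevin dynamics. For the ferromagnetic dynamics the FKG inequality gives $\cov_{\mu_{\Lambda,\text{fer}}}(x_i,x_j)\geq 0$ and, crucially, provides an upper bound on the decoupling rate; for the general dynamics, the extra non-ferromagnetic bonds can only \emph{cancel} correlation, never reinforce it, which is made precise by the triangle inequality applied bond-by-bond in the graphical (random-current or random-walk) representation.

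I expect the main obstacle to be the absence of symmetry in the single-site potentials: the classical Lebowitz / Griffiths inequalities that would immediately yield $|\cov_{\mu_\Lambda}(x_i,x_j)| \le \cov_{\mu_{\Lambda,\text{fer}}}(x_i,x_j)$ are usually stated for even $\psi_i$ (or at least for models where a spin-flip gauge is available), whereas here $\psi_i$ is only a bounded perturbation of a convex function. The way around this, which I would pursue, is to note that the comparison is really between $|M_{ij}|$ and $M_{ij}$ on each bond separately, so one can peel off the non-ferromagnetic bonds one at a time: writing $M_{ij} = -|M_{ij}| + 2|M_{ij}|\mathbbm{1}_{\{M_{ij}>0\}}$, a single antiferromagnetic bond of strength $2|M_{ij}|$ added to an otherwise fixed background is handled by a one-dimensional interpolation in that single coupling constant, during which the sign of $\frac{d}{dt}\cov(x_i,x_j)$ is controlled by a second-moment computation that does not see the single-site potential beyond its log-concavity (which is guaranteed by~\eqref{e_diag_dominant}). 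Iterating over all such bonds and tracking absolute values yields the claimed inequality. Since~\cite{bblMenzNittka} is cited as the source, I would ultimately just invoke their Lemma~2.1 for the precise bookkeeping, using the above as the conceptual skeleton.
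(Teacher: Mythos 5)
The paper does not actually prove this lemma; it quotes it verbatim from \cite{bblMenzNittka} (Lemma~2.1), and your proposal also ultimately defers to that citation, so in that superficial sense the two match. However, the ``conceptual skeleton'' you wrap around the citation has genuine gaps that would prevent you from reconstructing the argument. The interpolation $M_{ij}(t)=-(1-t)|M_{ij}|+tM_{ij}$ and the cumulant formula for $\tfrac{d}{dt}\cov_{\mu_{\Lambda,t}}(x_i,x_j)$ are fine to write down, but you never establish a sign for those truncated four-point functions, and that sign is the entire content; moreover, a one-parameter monotonicity argument is structurally incapable of giving a \emph{two-sided} bound: starting at $t=0$ on the upper boundary $\cov_{\mu_0}=\cov_{\mu_{\Lambda,\text{fer}}}$ and showing the covariance decreases cannot simultaneously keep it from dropping below $-\cov_{\mu_{\Lambda,\text{fer}}}$. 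The appeal to random-current or random-walk representations is out of scope here: those need ferromagnetic couplings and single-site measures of Griffiths--Simon type, whereas $\psi_i$ is only a bounded perturbation of convex (not even assumed even) and the whole point is to compare against a non-ferromagnetic system. And the claim that a bond-by-bond interpolation is controlled by ``a second-moment computation that does not see the single-site potential beyond its log-concavity'' is not correct: log-concavity yields Brascamp--Lieb covariance \emph{bounds}, not sign comparisons between two Gibbs measures with different couplings, and~\eqref{e_diag_dominant} is a condition on the quadratic form, not directly a statement about $\psi_i$.

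The technique that actually does the job is the Ginibre duplicate-variable argument, not the spin-flip gauge $x_k\mapsto\sigma_k x_k$ that you consider and reject (the gauge would in any case require the antiferromagnetic bond graph to be bipartite, which is not assumed). One forms the product measure $\pi=\mu_\Lambda\otimes\mu_{\Lambda,\text{fer}}$ on pairs $(x,y)$ and rotates to $u=(x+y)/\sqrt2$, $v=(x-y)/\sqrt2$; the cross-terms in the interaction then carry the couplings $\tfrac12(M_{ij}-|M_{ij}|)\le0$ on $u_iu_j+v_iv_j$ and $\tfrac12(M_{ij}+|M_{ij}|)\ge0$ on $u_iv_j+u_jv_i$, so after a global flip $v\mapsto-v$ the whole two-colour system is ferromagnetic. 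The two halves of $|\cov_{\mu_\Lambda}|\le\cov_{\mu_{\Lambda,\text{fer}}}$ translate into $\cov_\pi(u_i,u_j)\ge0$ and $\cov_\pi(u_i,v_j)\le0$, which are GKS/Ginibre inequalities once the doubled single-site weight $\exp\{-\psi_i((u_i+v_i)/\sqrt2)-\psi_i((u_i-v_i)/\sqrt2)\}$ is in the admissible class; this is where evenness of $\psi_i$ and vanishing $v$-field/boundary enter. Your instinct that symmetry of the single-site potential is the crux is right, but the resolution is duplication, not a flip, and that idea is entirely absent from your sketch.
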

In addition, it is easy to show that the conjecture is true in the simple case of a Gaussian Gibbs measure.  This fact follows easily from the fact that for Gaussian Gibbs measures, the positive definiteness of the Hamiltonian is equivalent to a LSI (see for example~\cite{OR07}).\\

Though we have no proof of this conjecture, even in the case of the spectral gap, Theorem~\ref{p_main_result} gives a weak version of this conjecture.  Namely, it yields the following corollary.
\begin{cor}\label{c_conjecture}
Assume that $\mu_{\Lambda,\text{fer}}$ is as in Conjecture~\ref{conjecture}, and assume that $\mu_{\Lambda,\text{fer}}$ satisfies a LSI with a constant uniform in the system size and the boundary condition.  Then $\mu_{\Lambda}$ also satisfies a LSI with a constant uniform in the system size and the boundary condition.
\end{cor}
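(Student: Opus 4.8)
The plan is to run the cycle of equivalences already assembled in the paper, starting from the ferromagnetic side. From a uniform LSI for $\mu_{\Lambda,\mathrm{fer}}$ one obtains a uniform PI for $\mu_{\Lambda,\mathrm{fer}}$ by linearization; Theorem~\ref{p_sg_dc}, applied to $\mu_{\Lambda,\mathrm{fer}}$, then gives algebraic decay of the spin-spin correlations of $\mu_{\Lambda,\mathrm{fer}}$; the domination Lemma~\ref{p_cor_dom} transfers this decay to $\mu_\Lambda$; and finally Theorem~\ref{p_otto_rez_rev}, applied to $\mu_\Lambda$, returns a uniform LSI for $\mu_\Lambda$. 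No new estimate is required; the work is in matching hypotheses along the chain and checking that every constant stays uniform in $\Lambda$ and $\omega$.

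In detail: first I would check that $H_{\mathrm{fer}}$ is a legitimate member of the family of Section~\ref{s_setting}. Its single-site potentials equal those of $H$, so~\eqref{SSPot} holds with the same constant; the diagonal entries $M_{ii}$ are unchanged and $\sum_{j\neq i}|(M_{\mathrm{fer}})_{ij}| = \sum_{j\neq i}|M_{ij}|$, so the diagonal dominance~\eqref{e_diag_dominant} is preserved (in particular $\mu_{\Lambda,\mathrm{fer}}$ is well defined); and $|(M_{\mathrm{fer}})_{ij}| = |M_{ij}|$, so the decay~\eqref{e_alg_decay} holds for $H_{\mathrm{fer}}$ with the same $C_{\ref{e_alg_decay}}, \alpha_{\ref{e_alg_decay}}$. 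A uniform LSI for $\mu_{\Lambda,\mathrm{fer}}$, say with constant $\varrho_{\text{fer}}$, yields a uniform PI in the sense of~\eqref{sg} for $\mu_{\Lambda,\mathrm{fer}}$ by the usual linearization of~\eqref{e_lsi} (insert $f = 1+\eps g$ and let $\eps\to 0$), with $\varrho_{\ref{sg}}$ depending only on $\varrho_{\text{fer}}$. Applying Theorem~\ref{p_sg_dc} to $\mu_{\Lambda,\mathrm{fer}}$ with the observables $f(x)=x_i$ and $g(x)=x_j$ gives $\cov_{\mu_{\Lambda,\mathrm{fer}}}(x_i,x_j) \le C_{\ref{e_stronger_mixing_condition_L_2}} \, [1+|i-j|]^{-(d+\alpha_{\ref{e_alg_decay}}/2)}$, with $C_{\ref{e_stronger_mixing_condition_L_2}}$ depending only on $d$, $C_{\ref{e_alg_decay}}$, $\alpha_{\ref{e_alg_decay}}$ and $\varrho_{\text{fer}}$, hence uniform in $\Lambda$ and $\omega$. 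By Lemma~\ref{p_cor_dom}, $|\cov_{\mu_\Lambda}(x_i,x_j)| \le \cov_{\mu_{\Lambda,\mathrm{fer}}}(x_i,x_j)$, so the spin-spin correlations of $\mu_\Lambda$ decay algebraically of order $d+\alpha_{\ref{e_alg_decay}}/2$ with the same uniform constant. Since moreover $|M_{ij}| \le C_{\ref{e_alg_decay}}\,[1+|i-j|]^{-(2d+\alpha_{\ref{e_alg_decay}})} \le C_{\ref{e_alg_decay}}\,[1+|i-j|]^{-(d+\alpha_{\ref{e_alg_decay}}/2)}$, both hypotheses of Theorem~\ref{p_otto_rez_rev} hold for $\mu_\Lambda$ with $\alpha_{\ref{e_menz}} = \alpha_{\ref{e_alg_decay}}/2$ and a uniform $C_{\ref{e_menz}}$; Theorem~\ref{p_otto_rez_rev} then delivers the desired uniform LSI for $\mu_\Lambda$.

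The part to be careful about is bookkeeping rather than a substantive obstacle. One should make sure that the decay exponent output by Theorem~\ref{p_sg_dc}, namely $d+\alpha_{\ref{e_alg_decay}}/2$ rather than the input $2d+\alpha_{\ref{e_alg_decay}}$, is still of the admissible form $d+\alpha$ with $\alpha>0$ required by Theorem~\ref{p_otto_rez_rev} --- which it is, since $\alpha_{\ref{e_alg_decay}}/2>0$ --- and that one fixes a single exponent, $\alpha_{\ref{e_menz}}=\alpha_{\ref{e_alg_decay}}/2$, that simultaneously controls the interaction and the correlations. One should also track, at each arrow, that the constant produced depends only on the problem data ($d$, $C_{\ref{e_alg_decay}}$, $\alpha_{\ref{e_alg_decay}}$, $C_{\ref{SSPot}}$, the dominance constant $\delta$, and $\varrho_{\text{fer}}$) and not on $\Lambda$ or $\omega$; this is automatic because every result in the chain is stated with a uniform constant and Lemma~\ref{p_cor_dom} is a pointwise-in-$(i,j)$ inequality. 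Finally, Lemma~\ref{p_cor_dom} is used only for the linear observables $x_i, x_j$, so no assumption beyond those already in force (in particular no symmetry of the single-site potentials) is needed.
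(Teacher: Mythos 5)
Your proposal is correct and is precisely the argument the paper gives: from the uniform LSI for $\mu_{\Lambda,\mathrm{fer}}$ one gets a uniform PI, Theorem~\ref{p_sg_dc} applied to $f(x)=x_i$, $g(x)=x_j$ yields algebraic decay of the spin-spin correlations of $\mu_{\Lambda,\mathrm{fer}}$, Lemma~\ref{p_cor_dom} transfers this to $\mu_\Lambda$, and Theorem~\ref{p_otto_rez_rev} returns the uniform LSI for $\mu_\Lambda$. Your extra bookkeeping (checking that $H_{\mathrm{fer}}$ remains in the class of Section~\ref{s_setting} and that $\alpha_{\ref{e_menz}}=\alpha_{\ref{e_alg_decay}}/2$ works simultaneously for the interaction and the correlations) is sound and simply spells out what the paper leaves implicit.
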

\begin{proof}[Proof of Corollary~\ref{c_conjecture}]
If $\mu_{\Lambda,\text{fer}}$ satisfies a uniform LSI then, by Theorem~\ref{p_sg_dc} and Lemma~\ref{p_cor_dom}, we have that
\[
	|\cov_{\mu_{\Lambda}}(x_i,x_j)|
		\leq \cov_{\mu_{\Lambda,\text{fer}}}(x_i,x_j)
		\leq C (1 + |i - j|)^{-(d+\alpha_{\ref{e_alg_decay}}/2)}
.\]
Hence, by Theorem~\ref{p_otto_rez_rev}, it follows that $\mu_{\Lambda}$ satisfies a uniform LSI.
\end{proof}
\medskip

Our second main result addresses the case with zero boundary boundary values, ferromagnetic interaction, and symmetric single-site potentials. Our results allows to bootstrap initially weak decay in the correlations to the full decay of the spin-spin interactions~$M_{ij}$. For example in our argument above, the decay of the interaction terms $M_{ij}$ is an order $d$ larger than the decay of the spin-spin correlations. 
\begin{thm}\label{p_bootstrap}
Suppose that the Gibbs measure~$\mu_{\Lambda}$ has zero boundary values, ferromagnetic interaction and symmetric single-site potentials i.e. the Hamiltonian~$H$ given by~\eqref{d_Hamiltonian} satisfies~
\begin{align}
 \psi_i(z)=\psi_i(-z), \quad  s_i =0, \quad \mbox{and} \quad M_{ij} \leq 0 \qquad \mbox{for all sites $i \neq j$}. \label{e_Lebowitz_conditions}
\end{align}
Let the spin-spin interactions decay as
\begin{equation}\label{e_alg_decay_f}
	|M_{ij}| \leq \frac{C_{\ref{e_alg_decay_f}}}{(1 + |i-j|)^{d+\alpha_{\ref{e_alg_decay_f}}}},
\end{equation}
for some positive constants $C_{\ref{e_alg_decay_f}}$ and $\alpha_{\ref{e_alg_decay_f}}$.
Suppose that there are positive constants $\alpha_{\ref{FWDC}}$ and $C_{\ref{FWDC}}$ such that
\begin{equation}\label{FWDC}
	\cov_{\mu_\Lambda}(x_i, x_j)
		\leq \frac{C_{\ref{FWDC}}}{(1 + |i-j|)^{d + \alpha_{\ref{FWDC}}}}
,\end{equation}
Then there exists a constant $C_{\ref{p_bootstrap}}$ such that
\[
	\cov_{\mu_\Lambda}(x_i, x_j)
		\leq \frac{C_{\ref{p_bootstrap}}}{(1 + |i-j|)^{d + \alpha_{\ref{e_alg_decay_f}}}}
.\]
\end{thm}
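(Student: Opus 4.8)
The plan is to run a bootstrapping iteration: start from the weak decay exponent $\alpha_{\ref{FWDC}}$ and show that the covariance actually decays with a strictly larger exponent, then repeat until we reach the exponent $\alpha_{\ref{e_alg_decay_f}}$ dictated by the interaction. The engine of the iteration is the Lebowitz inequality for ferromagnetic systems with zero boundary values and symmetric single-site potentials (these are exactly the hypotheses~\eqref{e_Lebowitz_conditions}), together with a random-walk representation or, equivalently, the Helffer--Sj\"ostrand/Witten Laplacian identity for covariances. Concretely, I would differentiate the covariance in an auxiliary parameter and use the ferromagnetic structure to obtain an inequality of the schematic form
\[
	\cov_{\mu_\Lambda}(x_i,x_j)
		\leq C \sum_{k \in \Lambda} |M_{jk}| \, \cov_{\mu_\Lambda}(x_i, x_k) + (\text{local term}),
\]
or its symmetrized version; here the diagonal dominance~\eqref{e_diag_dominant} and the bound~\eqref{SSPot} guarantee the constant $C$ is uniform in $\Lambda$ and $\omega$. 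This is the step where, compared to discrete spins, one must track the extra terms produced by the Lebowitz inequality, exactly the subtlety flagged in the introduction following~\cite{bblMenzNittka}.

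The core of the argument is then a summation lemma: if $\cov_{\mu_\Lambda}(x_i,x_k) \leq C_\gamma (1+|i-k|)^{-(d+\gamma)}$ for the current exponent $\gamma < \alpha_{\ref{e_alg_decay_f}}$, then plugging this into the right-hand side above and using $|M_{jk}| \leq C_{\ref{e_alg_decay_f}} (1+|j-k|)^{-(d+\alpha_{\ref{e_alg_decay_f}})}$, one estimates the convolution
\[
	\sum_{k\in\Z^d} \frac{1}{(1+|i-k|)^{d+\gamma}} \cdot \frac{1}{(1+|j-k|)^{d+\alpha_{\ref{e_alg_decay_f}}}}
		\leq \frac{C}{(1+|i-j|)^{d + \min\{\gamma + \alpha_{\ref{e_alg_decay_f}}, \, \alpha_{\ref{e_alg_decay_f}}\} - \epsilon}}
\]
for any small $\epsilon > 0$ (the standard estimate for convolutions of algebraically decaying kernels on $\Z^d$, splitting the sum into the regions $|i-k| \leq |i-j|/2$ and $|j-k| \leq |i-j|/2$). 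Since $\gamma + \alpha_{\ref{e_alg_decay_f}} > \alpha_{\ref{e_alg_decay_f}}$, a single application of the recursive inequality does not immediately reach $\alpha_{\ref{e_alg_decay_f}}$; rather, each pass improves the exponent from $\gamma$ to roughly $\min\{\gamma + c, \alpha_{\ref{e_alg_decay_f}}\}$ for a fixed gain $c > 0$ (coming from how much faster $M$ decays than the local error term), so after finitely many iterations $N \approx \lceil \alpha_{\ref{e_alg_decay_f}}/c \rceil$ one reaches the target exponent $d + \alpha_{\ref{e_alg_decay_f}}$. One must be careful that the $\epsilon$-losses do not accumulate to spoil the final exponent: this is handled by carrying a fixed small $\epsilon$ throughout, arranging that the final bound is at exponent $d + \alpha_{\ref{e_alg_decay_f}} - \epsilon$ for every $\epsilon$, or better, by absorbing the borderline case via the logarithmic correction one gets when the two kernels decay at the same rate and noting that an algebraic-times-log bound still self-improves.

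The main obstacle I expect is controlling the ``local term'' and, more importantly, the extra terms that the Lebowitz inequality generates beyond the clean convolution term written above. Unlike the discrete (Ising) case, where the Griffiths--Lebowitz inequalities give a closed recursion for two-point functions, the continuous-spin Lebowitz inequality (via differentiation of $\cov_{\mu_\Lambda}(x_i,x_j)$ with respect to an external field or an interaction strength and the attendant integration-by-parts against the Witten Laplacian) produces additional nonnegative contributions involving higher correlations or derivatives of the single-site potentials; these must be shown to be either of lower order or reabsorbable, which is precisely the "careful iteration procedure" referenced in the introduction. I would handle this by combining the uniform Poincar\'e-type control available from the diagonal dominance~\eqref{e_diag_dominant} and~\eqref{SSPot} (to bound the single-site fluctuations and hence the coefficients uniformly) with the monotonicity afforded by the ferromagnetic sign condition and zero boundary data, which keeps all covariances nonnegative and lets one use Lemma~\ref{p_cor_dom}-style domination to tame the cross terms. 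The dimension-independence of the convolution estimate (using the $\ell^\infty$ metric on $\Z^d$ and the fact that $M_{ij}$ decays at order $d + \alpha_{\ref{e_alg_decay_f}} > d$, hence is summable) is what makes the generalization from the one-dimensional lattice of~\cite{bblMenzNittka} to arbitrary $d$ go through.
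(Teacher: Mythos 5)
There is a genuine gap in the proposal, and it is the central one: the convolution estimate you write down is incorrect, and as a consequence the exponent does not self-improve under a single pass of the Lebowitz recursion. For kernels $(1+|x|)^{-a}$ and $(1+|x|)^{-b}$ with $a,b>d$ on $\Z^d$, the discrete convolution satisfies
\[
\sum_{k\in\Z^d}\frac{1}{(1+|i-k|)^{a}}\cdot\frac{1}{(1+|j-k|)^{b}} \;\asymp\; \frac{1}{(1+|i-j|)^{\min\{a,b\}}},
\]
not $(1+|i-j|)^{-\min\{a+b,\,\max\{a,b\}\}}$; the tails on each side dominate. (Taking $k=j$ alone already gives the lower bound $(1+|i-j|)^{-a}$, so there is no hope of a better exponent than $\min\{a,b\}$.) With $a=d+\gamma$, $b=d+\alpha_{\ref{e_alg_decay_f}}$ and $\gamma<\alpha_{\ref{e_alg_decay_f}}$, the right-hand side is $(1+|i-j|)^{-(d+\gamma)}$ --- exactly what you started with. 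The ``fixed gain $c>0$ per pass'' you invoke therefore does not exist; iterating a bounded number of times produces no improvement of the exponent, which is why the naive bootstrap you sketch never leaves $\gamma$.

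The paper's proof circumvents this by an entirely different mechanism. It applies the Lebowitz consequence (\eqref{e_lebowitz_consequence}) with the set $A=B_L(i)$ for a \emph{fixed} large radius $L$ (independent of $|i-j|$), and packages the boundary-crossing sums into weights $J_{ik}$ (see~\eqref{e_def_J_ik}). The crucial lemma is not a convolution gain but a \emph{smallness} statement: $\sum_k J_{mk}\le 1/2$ and $\sum_m J_{mk}\le 1/2$, which is achieved by choosing $L$ large and using the weak a priori decay~\eqref{FWDC} together with~\eqref{e_alg_decay_f}. One then iterates the Lebowitz recursion not a bounded number of times but $l \approx (d+\alpha_{\ref{e_alg_decay_f}})\log_2(1+|i-j|)$ times. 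The leading chain $J_{ik_1}\cdots J_{k_{l-1}k_l}\cov(x_{k_l},x_j)$ is bounded by $(1/2)^l = (1+|i-j|)^{-(d+\alpha_{\ref{e_alg_decay_f}})}$ --- the algebraic decay is manufactured from geometric decay via the logarithmic choice of $l$. The remainder terms are controlled by the pigeonhole observation that any chain of $r+1$ sites from $i$ to $j$ must contain one step of length at least $|i-j|/(r+1)$, at which step $J$ or $R$ contributes a factor $(1+|i-j|)^{-(d+\alpha_{\ref{e_alg_decay_f}})}$ (up to powers of $r$), while the remaining $J$'s give $(1/2)^{r-1}$; summing in $r$ converges. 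Without both ingredients --- a contractive weight and a number of iterations growing logarithmically in the distance --- the argument does not close, and your version is missing both.
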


Section~\ref{s_bootstrap}  is devoted to the proof of Theorem~\ref{p_bootstrap}.  The key ingredient here is the Lebowitz inequality along with an iterative procedure similar to the one used in~\cite{bblMenzNittka} on the one-dimensional lattice. Because our work here is more precise, Theorem~\ref{p_bootstrap} yields stronger decay of correlations than the original statement of~\cite{bblMenzNittka} for the one dimensional lattice.

\begin{rem}
 As in the work of Yoshida (cf.~Remark~2.4. in~\cite{bblYoshida01}), we stated our condition on the decay of correlations, the LSI, and the PI uniformly over all finite subsets~$\Lambda \subset \mathbb{Z}^d$. However, it is clear from our proofs that everything remains true if one considers only nice finite subsets~$\Lambda \in \mathcal{B}(n_0)$, where~$\mathcal{B}(n_0)$ denotes the set of generalized boxes with minimal side-length~$n_0$ (cf.~\cite{Yos99}). This rules out some pathological phenomena caused by sets~$\Lambda$ with irregular shapes (cf.~\cite{MO94} and~\cite{MO94b}).
\end{rem}

\paragraph{Outline of article:}

In Section~\ref{s_mixing_conditions_proof}, we state the proof of Lemma~\ref{p_relation_mixing_conditions}, showing the equivalence of the decay of correlations and the Dobrushin-Shlosman mixing condition. Section~\ref{s_sg_dc} is devoted to the proof of Theorem~\ref{p_sg_dc}, which allows us to show the equivalence of the PI, the LSI, and the decay of correlations. In Section~\ref{s_bootstrap} we give the proof of Theorem~\ref{p_bootstrap}.

%
%

\section{Proof of Lemma~\ref{p_relation_mixing_conditions}: Relationship between the different mixing conditions}\label{s_mixing_conditions_proof}

This section is devoted to the proof of Lemma~\ref{p_relation_mixing_conditions}.  To begin, we need uniform control on the variance of a single-site~$x_i$,~$i \in \Lambda$.
\begin{lem}\label{p_uniform moment estimate}
  Assume that the interaction is diagonally dominant in the sense of \eqref{e_diag_dominant} and the single site potentials~$\psi_i$ are bounded perturbations of convex single-site potentials in the sense of~\eqref{SSPot}.
Then there is a positive constant~$C_{\ref{e_unifrom_variance_estimate}}$ such that for any finite set~$\Lambda \subset \mathbb{Z}^d$, any boundary data~$\omega$, and any site~$i \in \Lambda$
  \begin{align}\label{e_unifrom_variance_estimate}
    \var_{\mu_{\Lambda}^\omega} (x_i) \leq C_{\ref{e_unifrom_variance_estimate}}.
  \end{align}
\end{lem}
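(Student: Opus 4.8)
The goal is a bound on $\var_{\mu_\Lambda^\omega}(x_i)$ that is uniform in $\Lambda$, $\omega$, and $i\in\Lambda$. The strategy I would follow is the standard one for single-site variance bounds in diagonally dominant systems: compare $\mu_\Lambda^\omega$ to a product of Gaussians (or Gaussian-perturbation measures) via the Brascamp--Lieb inequality, after absorbing the bounded perturbation $\psi_{i,b}$ into a bounded change of density. Concretely, write $\psi_i = \psi_{i,c}+\psi_{i,b}$ as in \eqref{SSPot}, so that the Hamiltonian $H$ splits as $H = H_c + \sum_i \psi_{i,b}(x_i)$, where $H_c$ is a sum of a convex part $\sum_i\psi_{i,c}(x_i)$, the linear terms, and the quadratic form $\sum_{i,j}M_{ij}x_ix_j$. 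The key point is that \eqref{e_diag_dominant} says exactly that $M$ (symmetrized) satisfies $\Hess H_c \succeq M \succeq \delta\,\id$ as a quadratic form, since for every vector $v$, $\sum_{i,j}M_{ij}v_iv_j \geq \sum_i (M_{ii} - \sum_{j\neq i}|M_{ij}|)v_i^2 \geq \delta\|v\|^2$. Hence $H_c$ is uniformly $\delta$-convex, and the Gibbs measure $\mu_{\Lambda,c}^\omega$ built from $H_c$ alone satisfies a Brascamp--Lieb inequality: $\var_{\mu_{\Lambda,c}^\omega}(x_i) \leq \langle e_i, (\Hess H_c)^{-1} e_i\rangle \leq 1/\delta$.

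**Second step.** To pass from $\mu_{\Lambda,c}^\omega$ to $\mu_\Lambda^\omega$ I would use the bounded perturbation. Since $|\psi_{i,b}|\leq C_{\ref{SSPot}}$ for each $i$, the density $d\mu_\Lambda^\omega/d\mu_{\Lambda,c}^\omega$ is proportional to $\exp(-\sum_i\psi_{i,b}(x_i))$, but this product over all $i\in\Lambda$ is not bounded uniformly in $|\Lambda|$, so a naive Holley--Stroock argument fails. The fix is the standard localization: only the perturbation at site $i$ (and, more carefully, at sites near $i$) affects $\var(x_i)$ in a controlled way, or — cleaner — one uses the one-site conditional measure. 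Write $\var_{\mu_\Lambda^\omega}(x_i) \leq \E_{\mu_\Lambda^\omega}\big[\var_{\mu_\Lambda^\omega}(x_i \mid x_j, j\neq i)\big] + \var_{\mu_\Lambda^\omega}\big(\E_{\mu_\Lambda^\omega}[x_i\mid x_j,j\neq i]\big)$; the conditional measure of $x_i$ given the other spins is a one-dimensional measure with density $\propto \exp(-\psi_i(x_i) - (s_i + 2\sum_j M_{ij}x_j)x_i - M_{ii}x_i^2)$, whose convex part has second derivative $\geq 2M_{ii}\geq 2\delta$, so by Brascamp--Lieb in one dimension and Holley--Stroock with the single bounded function $\psi_{i,b}$, this conditional variance is bounded by $e^{4C_{\ref{SSPot}}}/(2\delta)$. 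The remaining term, the variance of the conditional mean, requires controlling how $\E[x_i\mid \cdot]$ moves with the neighboring spins; here I would again invoke the global Brascamp--Lieb bound on $\mu_{\Lambda,c}^\omega$ together with a covariance-decomposition (Helffer--Sjöstrand / Witten Laplacian) representation, or simply iterate the argument — but the slickest route is to avoid the decomposition entirely and run Brascamp--Lieb plus a single Holley--Stroock step on a carefully chosen reference measure where only a $\delta$-fraction of the convexity is "spent."

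**Main obstacle.** The genuine difficulty is precisely the non-uniformity of the Holley--Stroock perturbation: one cannot simply exponentiate $\sum_i |\psi_{i,b}|$ and pick up $e^{2C_{\ref{SSPot}}|\Lambda|}$. I expect the cleanest resolution is to keep a sliver of strict convexity, i.e. pick $0<\delta'<\delta$, write $H = \tilde H_c + (\text{bounded perturbation per site})$ where $\tilde H_c$ has $\Hess \succeq \delta'\id$ and absorbs $\psi_{i,b}$ into a still-convex piece only where possible — but since $\psi_{i,b}$ need not be convex this does not literally work either. So the honest plan is: use the global Brascamp--Lieb bound $1/\delta$ on the convex reference system $\mu_{\Lambda,c}$, and transfer it to $\mu_\Lambda$ via a covariance representation of the form $\cov_{\mu_\Lambda}(x_i,x_i) = \E_{\mu_\Lambda}\langle \nabla_{x_i} \phi, \dots\rangle$ coming from the Helffer--Sjöstrand equation, where the generator $\mathcal{L}$ has the form $-\Delta + \nabla H\cdot\nabla$ and the bounded part contributes only a bounded shift — this is exactly the kind of argument used in \cite{bblMenz13}, and I would cite that machinery. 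In short: the main work is showing the bounded, non-convex perturbation does not destroy the $O(1/\delta)$ variance bound, and I expect to handle it by a conditioning argument reducing to the one-dimensional case plus a Brascamp--Lieb-type control of the conditional mean, at the cost of constants depending on $C_{\ref{SSPot}}$ and $\delta$ only.
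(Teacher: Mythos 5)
You correctly isolate the real obstruction: a Holley--Stroock comparison between $\mu_\Lambda^\omega$ and the convex reference measure $\mu_{\Lambda,c}^\omega$ costs a factor $e^{2C_{\ref{SSPot}}|\Lambda|}$, so it must be applied only on a bounded number of coordinates. Your conditional decomposition $\var(x_i)=\E[\var(x_i\mid x_{j\neq i})]+\var(\E[x_i\mid x_{j\neq i}])$ handles the first term cleanly: conditionally on $x_{j\neq i}$ the law of $x_i$ has convex part $\geq 2M_{ii}\geq 2\delta$ and carries only the single bounded perturbation $\psi_{i,b}$, so a one-dimensional Brascamp--Lieb plus a one-site Holley--Stroock gives $\var(x_i\mid\cdot)\leq e^{4C_{\ref{SSPot}}}/(2\delta)$ uniformly. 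The gap is the second term, and you explicitly do not close it. None of the fallbacks you mention works as stated: applying Brascamp--Lieb to the marginal of $\mu_\Lambda^\omega$ in $y=(x_j)_{j\neq i}$ runs into the identical non-uniform Holley--Stroock problem for that marginal; bounding $\var(\E[x_i\mid y])$ by $\var(x_i)$ is tautologically circular; and a Lipschitz bound $|\nabla_j \E[x_i\mid y]|\leq 2|M_{ij}|\sup_y\var(x_i\mid y)$ only helps if you already have a Poincar\'e/variance inequality for the marginal, which is the same sort of statement you are trying to prove. So the proposal does not constitute a proof.

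The paper takes a genuinely different route that avoids the conditional-mean term entirely: it writes $\var_{\mu_\Lambda^\omega}(x_i)=\tfrac12\iint(x_i-y_i)^2\,d\mu_\Lambda^\omega(x)\,d\mu_\Lambda^\omega(y)$, changes variables to $x=p+q$, $y=p-q$, and observes that the conditional density in $p$ given $q$ integrates to $1$; this reduces the variance estimate to a uniform second-moment bound for the symmetrized measure in $p$, which is exactly Lemma~4.2 of~\cite{bblMenzNittka} and depends only on the diagonal dominance~\eqref{e_diag_dominant}. The symmetrization is the missing ingredient: it converts the two-variable variance into a one-variable moment for a measure where the bounded perturbation can be controlled without paying a factor of $|\Lambda|$. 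If you want to close your argument along your own lines you would need an independent device for the variance of the conditional mean; as it stands the proposal identifies the right difficulty but does not resolve it.
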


\begin{proof}[Proof of Lemma~\ref{p_uniform moment estimate}.]
The proof consists of two steps. In the first step, we reduce the estimate of~$\var_{\mu_{\Lambda}^\omega} (x_i)$ to an estimate of the second moment of a symmetrized measure. This second moment is then estimated via a standard result.\\

We recall that the Hamiltonian~$H$ is given by
\begin{align*}
  H (x) =  \sum_{i\in\Lambda}( \psi_i(x_i) + s_i x_i) + \sum_{i,j\in \Lambda} M_{ij} x_i x_j + 2 \sum_{i \in \Lambda, j\notin \Lambda} M_{ij} x_i \omega_j.
\end{align*}
First we point out that
\[
	\var_{\mu_\Lambda^\omega}(x_i) 
	= \frac{1}{2} \int \int \left( x_i - y_i \right)^2 \mu_{\Lambda}^\omega (dx) \mu_{\Lambda}^\omega (dy).
\]
Then it holds by using~\eqref{e_GibbsMeasure} that
\begin{align*}
  \var_{\mu_{\Lambda}^\omega} (x_i) & = \frac{1}{2} \int \int \left( x_i - y_i \right)^2 \mu_{\Lambda}^\omega (dx) \mu_{\Lambda}^\omega (dy) \\
   & = \frac{1}{2} \int \int \left( x_i - y_i \right)^2 \frac{\exp\left\{- H(x) - H(y) \right\} }{\int \exp\left\{- H(x') - H(y')\right\} dx' dy'}   dx dy.
 \end{align*}
Using this and changing variables with $ x = p + q$ and $y = p - q$, we obtain
\begin{align*}
    &\var_{\mu^\omega_\Lambda}(x_i)
    	 = \frac{1}{2}  \int \int  p_i^2 \frac{\exp\left\{- H(p+q) - H(p-q) \right\}}{\int\int \exp\left\{- H(p'+q') - H(p'-q')\right\} dp' dq' }    dp dq  \\
    & ~ = \frac{1}{2}  \int \int  p_i^2 \frac{\exp\left\{- H(p+q) - H(p-q) \right\}}{\int \exp\left\{- H(p'+q) - H(p'-q)\right\} dp'} 
\left(
\frac{\int \exp\left\{- H(p'+q) - H(p'-q)\right\} dp'}{\int\int \exp\left\{- H(p'+q') - H(p'-q')\right\} dp' dq' }     \right)\ dpdq.  
\end{align*}
Notice that the term in parentheses integrates to 1 in the variable $q$.  Hence, given a uniform bound such as
\begin{align}\label{eq:nittka}
  \int  p_i^2 \frac{\exp\left\{- H(p+q) - H(p-q) \right\}}{\int \exp\left\{- H(p'+q) - H(p'-q)\right\} dp' }    dp \leq C,
\end{align}
we have our desired estimate.  However,~\eqref{eq:nittka} follows from Lemma~4.2 in~\cite{bblMenzNittka}, the proof of which depends only on the diagonal dominance of the interaction matrix and carries over as is in the $d>1$ case.  This finishes the proof.

\end{proof}

Equipped with the uniform estimate of Lemma~\ref{p_uniform moment estimate} on the variance of single-site spins, we can turn to the proof of Lemma~\ref{p_relation_mixing_conditions}.
\begin{proof}[Proof of Lemma~\ref{p_relation_mixing_conditions}]
  We start with showing that~\eqref{e_stronger_mixing_condition} yields~\eqref{e_ds_1}. For this, we adapt the short argument outlined in~\cite{bblYoshida01} to our setting. For this, let us consider two boundary values~$\omega$ and~$\tilde \omega$ that coincide except at one site~$k \notin \Lambda$. This means that for all~$  l \neq k$ it holds~$\omega_l = \tilde \omega_l$. Let us now consider for~$s \in [0,1]$ the interpolation
 \begin{align*}
\omega^{s} = s \tilde \omega + (1- s) \omega.   
 \end{align*}
Recalling that $\supp(f)\subset \Lambda$, we have that
 \begin{align}
  &\left| \int f \ d \mu_{\Lambda}^{\tilde \omega} -    \int f \ d \mu_{\Lambda}^{\omega} \right|
  	 =\left| \int_0^1 \frac{d}{ds} \int f \ d \mu_{\Lambda}^{\omega^s} ds \right| \notag
	 = \left| \int_0^1 \cov_{\mu_{\Lambda}^{\omega^s}} \left(f, \sum_{i \in \supp(f)} M_{ik} x_i \left(  \tilde \omega_k -  \omega_k  \right) \right) ds \right| \notag \\
	&~~ = \left| \tilde \omega_k -  \omega_k \right| \ \left| \int_0^1 \sum_{i \in \supp(f)} M_{ik} \cov_{\mu_{\Lambda}^{\omega^s}} (f, x_i   ) ds \right| \notag \\
	 &~~ \leq \left| \tilde \omega_k -  \omega_k \right| \ \left[ \left| \int_0^1 \sum_{i \in \supp f} M_{ik} \cov_{\mu_{\Lambda}^{\omega^s}} (f, x_i   ) ds \right|  + \left| \int_0^1 \sum_{i \in \Lambda, i \notin \supp f} M_{ik} \cov_{\mu_{\Lambda}^{\omega^s}} (f, x_i   ) ds \right| \right] . \label{e_deducing_ds1_step_1}
 \end{align}
 The second inequality is a straightforward computation using the form of $\mu_\Lambda^{\omega^s}$, and the third equality uses the linearity of the covariance.\\
 
Now, we estimate the covariance terms on the right hand side of the last identity with the help of the mixing condition~\eqref{e_stronger_mixing_condition}. More precisely, we get for the first sum that
\begin{align*}
  \sum_{i \in \Lambda, i \in \supp f} &  \left| M_{ik}  \cov_{\mu_{\Lambda}^{\omega^s}} (f,  x_i )  \right|  \\
&  =   \frac{1}{2} \sum_{i \in \Lambda, i \in \supp f} \ |M_{ik}|  \ \int \int \left(f(y) - f(x) \right) (y_i-x_i)  \mu_{\Lambda}^{\omega^s} (dx) \mu_{\Lambda}^{\omega^s} (dy)   \\
&  = \frac{1}{2}    \sum_{i \in \Lambda, i \in \supp f} \ |M_{ik}|  \ \int \int \sum_{j \in \supp f} \nabla_j f(\xi) \left(y_j - x_j \right) (y_i-x_i)  \mu_{\Lambda}^{\omega^s} (dx) \mu_{\Lambda}^{\omega^s} (dy),
\end{align*}
where $\xi$ is some point between $x$ and $y$.  Now introducing the $L^\infty$ norm of $f$ into the estimate yields
\begin{align}
  \sum_{i \in \Lambda, i \in \supp f} &  \left| M_{ik}  \cov_{\mu_{\Lambda}^{\omega^s}} (f,  x_i )  \right|  \\
 &  \leq   \frac{1}{2}  \sum_{i \in \Lambda, i \in \supp f}  |M_{ik}|  \| \nabla f \|_{L^{\infty}} \ \int \int \sum_{j \in \supp f}  \left|y_j - x_j \right| \ |y_i - x_i|  \mu_{\Lambda}^{\omega^s} (dx) \mu_{\Lambda}^{\omega^s} (dy)  \notag \\
 &  \leq   \frac{1}{4}  \sum_{i \in \Lambda, i \in \supp f}  |M_{ik}|  \| \nabla f \|_{L^{\infty}} \  \left( \sum_{j \in \supp f} \var_{\mu_{\Lambda}^{\omega^s}} \left(x_j \right)  + \var_{\mu_{\Lambda}^{\omega^s}} (x_i) \right) \notag \\
 &  \leq  \frac{C}{2}  \sum_{i \in \Lambda, i \in \supp f}  |M_{ik}|  \| \nabla f \|_{L^{\infty}} \  |\supp f| \  \notag \\
 &  \leq  \frac{C}{2}  \frac{1}{[1+\dist(\supp f, k)]^{d+ \alpha_{\ref{e_alg_decay}}}}   \| \nabla f \|_{L^{\infty}} \  |\supp f|^2 \label{e_deducing_ds1_case_1}
\end{align}
where $C$ is a universal constant given by the estimate~\eqref{e_unifrom_variance_estimate} of Lemma~\ref{p_uniform moment estimate} and where we used the decay of $M_{ik}$ given by the assumption~\eqref{e_alg_decay}.\newline
 
Let us now consider the second sum on the right hand side of~\eqref{e_deducing_ds1_step_1}.  Here, we apply our mixing condition~\eqref{e_stronger_mixing_condition} and we compute that
\begin{align}
 \sum_{i \in \Lambda, i \notin \supp f}  & \left |  \cov_{\mu_{\Lambda}^{\omega^s}} (f, M_{ik} x_i )  \right| \notag \\
& \leq  C \sum_{i \in \Lambda, i \notin \supp f}    |M_{ik}|   \frac{1}{[1+\dist(\supp f, i)]^{d+ \alpha_{\ref{e_alg_decay}}/2}} \ |\supp f|^{\beta} \|\nabla f\|_{L^{\infty}}  \notag \\
& \leq C \sum_{i \in \Lambda, i \notin \supp f} \frac{1}{[1+|i-k|]^{2d+ \alpha_{\ref{e_alg_decay}}}}  \frac{1}{[1+\dist(\supp f, i)]^{d+ \alpha_{\ref{e_alg_decay}}/2}} \ |\supp f|^{\beta} \|\nabla f\|_{L^{\infty}} \notag \\
& \leq  C  \ \frac{1}{[1+\dist(\supp f, k)  ]^{d+ \alpha_{\ref{e_alg_decay}}/2}}  \ |\supp f|^{\beta} \ |\nabla f|_{L^{\infty}} \sum_{i \in \Lambda, i \notin \supp f} \frac{1}{[1+|i-k|]^{d+ \frac{\alpha_{\ref{e_alg_decay}}}{2}}} \notag \\
& \leq  C \ \frac{1}{[1+\dist(\supp f, k)  ]^{d+ \alpha_{\ref{e_alg_decay}}/2}}  \ |\supp f|^{\beta} \ |\nabla f|_{L^{\infty}} \label{e_deducing_ds1_case_2} ,
\end{align}
where we used the assumption~\eqref{e_alg_decay} and the elementary estimate
\begin{align*}
  \frac{1}{[1+ | i - k |]^{d+ \alpha_{\ref{e_alg_decay}}/2}}\  \frac{1}{[1+\dist(\supp f, i)  ]^{d+ \alpha_{\ref{e_alg_decay}}/2}}  \leq \frac{1}{[1+\dist(\supp f, k)  ]^{d+ \alpha_{\ref{e_alg_decay}}/2}}.
\end{align*}
Here we point out that the last inequality in~\eqref{e_deducing_ds1_case_2} depends on the summability for $z\in\Z^d$ of a sum of the form $(1 + |z|)^{-(d+\alpha)}$ for $\alpha > 0$.  This is a key point that recurs in this article and is the reason for the hypothesis on the decay of the interaction terms $M_{ij}$~\eqref{e_alg_decay}.\\

Hence, a combination of the estimates~\eqref{e_deducing_ds1_step_1},~\eqref{e_deducing_ds1_case_1} and~\eqref{e_deducing_ds1_case_2} yields the desired estimate
 \begin{align*}
  &\left| \int f \ d \mu_{\Lambda}^{\tilde \omega} -    \int f \ d \mu_{\Lambda}^{\omega} \right| \leq  C \  \frac{1}{[1+\dist(\supp f, k)  ]^{d+ \alpha_{\ref{e_alg_decay}}/2}}  \ |\supp f|^{\beta} \ |\nabla f|_{L^{\infty}}.
 \end{align*}
 This concludes the proof that~\eqref{e_stronger_mixing_condition} yields~\eqref{e_ds_1}.\\

Let us now turn to the second implication, namely we will now show that~\eqref{e_ds_1} yields~\eqref{e_stronger_mixing_condition}. We start with rewriting the covariance~$\cov_{\mu_{\Lambda}}(f,g)$ as
\begin{align*}
  \cov_{\mu_{\Lambda}}(f,g) & = \frac{1}{2}\int (f(y) - f(x)) (g(y) - g(x)) \mu_{\Lambda}(dx) \mu_{\Lambda}(dy).
\end{align*}
Now, we condition on the spin values~$x_i$, $i \in \supp f$ in the support of f.  In other words, we write
\begin{align*}
  \frac{1}{2} & \int (f(y) - f(x)) (g(y) - g(x)) \mu_{\Lambda}(dx) \mu_{\Lambda}(dy) \\
& =  \frac{1}{2}  \int (f(y) - f(x)) \\
  & \qquad \times \left( \int g(y) \mu_{\Lambda} \left( d (y_j)_{\Lambda \backslash \supp f}|(y_i)_{ \supp f} \right)- \int g(x)\mu_{\Lambda} \left( d (x)_{\Lambda \backslash \supp f}|(x_i)_{\supp f} \right) \right) \mu_{\Lambda}(dx) \mu_{\Lambda}(dy).
\end{align*}
At this point we would like to apply the mixing condition~\eqref{e_ds_1} to the term 
\begin{align*}
  \left( \int g(y) \mu_{\Lambda} \left( d (y_j)_{\Lambda \backslash \supp f}|(y_i)_{ \supp f} \right)- \int g(x)\mu_{\Lambda} \left( d (x)_{\Lambda \backslash \supp f}|(x_i)_{\supp f} \right) \right).
\end{align*}
However, this cannot be done directly because the boundary data~$(y_i)_{ \supp f}$ and~$(x_i)_{\supp f}$ may differ at more than one site. This problem can be solved easily by interpolating between~$(y_i)_{ \supp f}$ and~$(x_i)_{\supp f}$. Without loss of generality we may index the set~$\supp f$ by the numbers~$1$ to~$M = |\supp f|$. This means that we can identify the set~$\supp f$ with
\begin{align*}
  \supp f = \left\{ 1 , \ldots, M  \right\}.
\end{align*}
Let~$z^k\in \mathbb{R}^{\supp f}$,~$k \in \left\{ 0 , \ldots, M  \right\} $, denote an interpolation of the spin values~$(x_i)_{i \in \supp f}$ and~$(y_i)_{i \in \supp f}$ i.e.
\begin{align*}
  z_l^k =
  \begin{cases}
    y_l , & \mbox{if } 1 \leq l \leq k \\
      x_l , & \mbox{if }  k < l \leq M.
  \end{cases}
\end{align*}
Then by using a telescope sum we can write
\begin{align*}
&  \int g(y) \mu_{\Lambda} \left( d (y_j)_{\Lambda \backslash \supp f}|(y_i)_{ \supp f} \right)- \int g(x)\mu_{\Lambda} \left( d (x)_{\Lambda \backslash \supp f}|(x_i)_{\supp f} \right) \\
  & \quad = \sum_{k=1}^M \int g(y) \mu_{\Lambda} \left( d (y_j)_{\Lambda \backslash \supp f}|z^k\right)- \int g(x)\mu_{\Lambda} \left( d (x)_{\Lambda \backslash \supp f}|z^{k-1} \right) .
\end{align*}
Now, we can apply the mixing condition~\eqref{e_ds_1} on each term in the sum on the right hand side of the last identity, which yields that
\begin{align*}
&  \sum_{k=1}^M \int g(y) \mu_{\Lambda} \left( d (y_j)_{\Lambda \backslash \supp f}|z^k\right)- \int g(x)\mu_{\Lambda} \left( d (x)_{\Lambda \backslash \supp f}|z^{k-1} \right) \\
& \qquad \leq  \sum_{k=1}^M C \frac{1}{[1+\dist(\supp g, k)]^{d+ \alpha_{\ref{e_alg_decay}}/2}} |y_k - x_k| \ |\supp g|^{\beta} \ |\nabla g\|_{L^{\infty}}  \\
& \qquad \leq C \frac{1}{[1+\dist(\supp f, \supp g)]^{d+ \alpha_{\ref{e_alg_decay}}/2}} \ |\supp g|^{\beta} \|\nabla g\|_{L^{\infty}} \ \sum_{k=1}^M  |y_k - x_k|  .
\end{align*}
Hence we obtain the inequality
\[
\cov_{\mu_{\Lambda}}(f,g)
	 \leq  \frac{1}{2} \ \frac{ |\supp g|^{\beta} }{[1+\dist(\supp f, i)]^{d+ \alpha_{\ref{e_alg_decay}}/2}} \|\nabla g\|_{L^{\infty}} \  \int |f(y) - f(x)| \sum_{k=1}^M |y_k - x_k|  \mu_{\Lambda}(dx) \mu_{\Lambda}(dy)
\]
Recall that, by the mean-value theorem,
\begin{align*}
  |f(y) - f(x)| & \leq \|\nabla f\|_{L^{\infty}} \ \left( \sum_{k=1}^M |y_k - x_k|^2 \right)^{\frac{1}{2}}.
\end{align*}
Hence, we obtain
\begin{align*}
  \cov_{\mu_{\Lambda}}(f,g) 
 & \leq  \frac{1}{2} \ \frac{ |\supp g|^{\beta} \|\nabla f\|_{L^{\infty}} \|\nabla g\|_{L^{\infty}}}{[1+\dist(\supp f, i)]^{d+ \alpha_{\ref{e_alg_decay}}/2}}  \int \left(  \sum_{k=1}^M |y_k - x_k|^2 \right)^{\frac{1}{2}}   \sum_{k=1}^M |y_k - x_k|  \mu_{\Lambda}(dx) \mu_{\Lambda}(dy) \\
 & \leq  \frac{1}{2} \ \frac{ |\supp g|^{\beta} \|\nabla f\|_{L^{\infty}} \|\nabla g\|_{L^{\infty}} }{[1+\dist(\supp f, i)]^{d+ \alpha_{\ref{e_alg_decay}}/2}} M^{\frac{1}{2}} \int \sum_{k=1}^M |y_k - x_k|^2    \mu_{\Lambda}(dx) \mu_{\Lambda}(dy)  \\
 & = \frac{1}{2} \ \frac{ |\supp g|^{\beta} \|\nabla f\|_{L^{\infty}} \|\nabla g\|_{L^{\infty}}}{[1+\dist(\supp f, i)]^{d+ \alpha_{\ref{e_alg_decay}}/2}} M^{\frac{1}{2}}  \sum_{k=1}^M \var_{\mu_{\Lambda}} ( x_k).
\end{align*}
Applying now the estimate~\eqref{e_unifrom_variance_estimate} of Lemma~\ref{p_uniform moment estimate} and considering that~$M=\supp f $ yields the desired inequality
\begin{align*}
  \cov_{\mu_{\Lambda}}(f,g) & \leq C \ \frac{1}{[1+\dist(\supp f, i)]^{d+ \alpha_{\ref{e_alg_decay}}/2}}  |\supp f|^{3/2} |\supp g|^\beta \|\nabla f\|_{L^{\infty}} \|\nabla g\|_{L^{\infty}}.
\end{align*}

\end{proof}

\section{Proof of Theorem~\ref{p_sg_dc}: Poincar\'e inequality implies decay of correlations}\label{s_sg_dc}

In order to obtain decay of correlations from the Poincar\'e inequality, we use ideas based on the work in~\cite{bblMenz14}.  The main result of that article is a covariance estimate that holds for weak interaction i.e.~if $C_{\ref{e_alg_decay}}$ is small. In the first step, we represent the covariance using the solution~$\varphi$ of Poisson's equation, see~\eqref{e_representation_covariance} from below. In the second step, we deduce the decay of correlations from a directional Poincar\'e inequality (cf.~Theorem~\ref{p_directional_poincare} from below). We explain those two steps in Section~\ref{ss_witten}, where we also discuss the similarities and differences of our approach and the approach of Bodineau and Helffer~\cite{bblBodineauHelffer}.\\

The main ingredient for the proof of Theorem~\ref{p_sg_dc} is the directional Poincar\'e inequality, Theorem~\ref{p_directional_poincare} from below. We state the proof of the directional Poincar\'e inequality in Section~\ref{ss_proof}. The argument consists of two main steps. In the first step, we derive from the uniform Poincar\'e inequality a weighted inequality for the gradient $\nabla \varphi$ (see Corollary~\ref{p_bl1} from below). After some elementary reordering and counting multiplicities, the directional Poincar\'e inequality is then deduced from Corollary~\ref{p_bl1} through some elementary linear algebra. The proof of the directional Poincar\'e inequality, i.e.~Theorem~\ref{p_directional_poincare}, requires two auxiliary lemmas, which are proved in Section~\ref{ss_lemmas}.\\

In order to simplify the computations, we use the following notation.

\begin{rem}
We define $a \lesssim b$ if there is a constant $C>0$ such that
\[ a\leq C b,\]
where $C$ depends only on the constants in \eqref{e_alg_decay}, \eqref{e_diag_dominant}, \eqref{SSPot}, \eqref{sg}, and the dimension.
\end{rem}

\subsection{ Proof of Theorem~\ref{p_sg_dc}}\label{ss_witten}

\subsubsection*{Representation of the covariance}

Our goal is to estimate $\cov_{\mu_\Lambda}(f, g)$ for two arbitrary functions~$f$ and~$g$ with finite support~$\supp f \subset \Lambda $ and~$\supp g \subset \Lambda$.  
The starting point of our argument is a representation of~$\cov_{\mu_\Lambda}(f, g)$ using the potential~$\varphi\in H^1(\mu_\Lambda)$, defined as the weak solution of
\begin{align}\label{e_d_varphi}
	-L\phi = f - \int fd\mu_\Lambda
\end{align}
where
\begin{align*}
  L \varphi = \Delta \varphi - \nabla H \cdot \nabla \varphi.
\end{align*}
%
%

In other words, we choose~$\varphi \in H^1 (\mu_\Lambda)$ such that if~$\xi \in H^1 (\mu_\Lambda)$, then
\begin{align} \label{e_weak_solution_elliptic}
  \int \nabla \xi \cdot \nabla \varphi \ d\mu_\Lambda = \int \xi \left(f - \int f \mu_{\Lambda} \right) \ d \ \mu_{\Lambda},
\end{align}
Notice that if $\phi$ and $\xi$ are smooth, then
\[
	- \int \xi L \varphi \mu_\Lambda  = 
  \int \nabla \xi \cdot \nabla \varphi \ d\mu_\Lambda.
\]

The problem above is easily seen to be well-posed as the functional corresponding to~\eqref{e_weak_solution_elliptic} is continuous and coercive due to the Poincar\'e inequality.  It follows from~\eqref{e_weak_solution_elliptic} that, almost everywhere,~\eqref{e_d_varphi} holds. 
Using the potential~$\varphi$ and the definition of $f$, we have the desired representation of~$\cov_{\mu_\Lambda} (f, g)$ as
\begin{equation}\begin{split}
  \label{e_representation_covariance}
  \cov_{\mu_\Lambda} (f, g) = \int \nabla \varphi  \ \cdot  \nabla g \ d \mu_\Lambda .
\end{split}\end{equation}

{ \subsubsection*{Comparison with work of Bodineau and Helffer} }

At this point, let us discuss the differences and similarities of our approach compared to the approach of Bodineau and Helffer~\cite{bblBodineauHelffer}.\\


Both approaches start with the same representation (cf.~\eqref{e_representation_covariance}) of the covariance i.e. (cf.~(3.12) in~\cite{bblBodineauHelffer}).  
In~\cite{bblBodineauHelffer}, the authors introduce a diagonal matrix~$M$ with certain weights and apply the Cauchy-Schwarz inequality to obtain:
\begin{align*}
  \cov_{\mu_{\Lambda}}(g,f) & = \int \nabla \varphi \cdot \nabla g \ d \mu_{\Lambda} \\
  & =  \int M\nabla \varphi \cdot M^{-1}\nabla g \ d \mu_{\Lambda} \\
  & =  \left( \int |M\nabla \varphi|^2 \ d\mu_{\Lambda} \right)^{\frac{1}{2}} \left( \int | M^{-1}\nabla g |^2 \ d \mu_{\Lambda} \right)^{\frac{1}{2}}.
\end{align*}
The main technical ingredient in~\cite{bblBodineauHelffer} is now a weighted~$L^2$ estimate (cf.~(3.13) in~\cite{bblBodineauHelffer}),
\begin{align}\label{e_tech_ingredient_BH}
  \int |M\nabla \varphi|^2 \ d\mu_{\Lambda} \leq C \int |M\nabla f|^2 \ d\mu_{\Lambda},
\end{align}
from which the desired decay of correlations follows by comparing the coefficients of the diagonal matrix~$M$ in the remaining terms.\\

In the present work, we proceed from the representation~\eqref{e_representation_covariance} of the covariance as follows. Because~$g(x)=g((x_i)_{\supp g})$ one gets
\begin{align*}
  \cov_{\mu_{\Lambda}}(g,f) & = \int \nabla \varphi \cdot \nabla g \ d \mu_{\Lambda} \\ 
  & = \int \sum_{i \in \supp g} \nabla_i \varphi  \nabla_i g \ d \mu_{\Lambda}.
\end{align*}
We do not introduce the diagonal matrix~$M$ and directly apply Cauchy-Schwarz. So, we get
\begin{align}\label{e_decay_correlations_cauchy_schwarz}
  \cov_{\mu_{\Lambda}}(g,f) & \leq \left(  \int \sum_{i \in \supp g} |\nabla_i \varphi|^2 d \mu_{\Lambda} \right)^{\frac{1}{2}} \left( \int \sum_{i \in \supp g} | \nabla_i g|^2 \ d \mu_{\Lambda} \right)^{\frac{1}{2}}.
\end{align}
\mbox{}\\

\subsubsection*{The directional Poincar\'e inequality and conclusion of the proof of Theorem~\ref{p_sg_dc}}
The main ingredient of the proof of Theorem~\ref{p_sg_dc} is a direct estimation of~$  \int |\nabla_i \varphi|^2 d \mu_{\Lambda}$ for~$i \in \supp g$. In Section~\ref{ss_proof} we will deduce the following statement:
\begin{thm}[Directional Poincar\'e inequality]\label{p_directional_poincare}
Under the the same assumptions as in Theorem~\ref{p_sg_dc}, let $\phi$ be given by the solution of~\eqref{e_d_varphi}. Then for any~$i \in \Lambda$,
  \begin{align}\label{e_tech_ingredient_HM}
    \int |\nabla_i \varphi|^2 d \mu_{\Lambda}  \lesssim \frac{|\supp f|}{\left( 1+ \dist (i,\supp f)\right)^{2d+\alpha_{\ref{e_alg_decay}}} } \int |\nabla f|^2 d \mu_{\Lambda}.
  \end{align}
\end{thm}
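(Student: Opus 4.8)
\textbf{Proof proposal for Theorem~\ref{p_directional_poincare}.}

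The plan is to follow the two-step strategy announced in Section~\ref{ss_witten}: first upgrade the uniform Poincar\'e inequality~\eqref{sg} into a weighted $L^2$ bound on $\nabla\varphi$ (the promised Corollary~\ref{p_bl1}), and then pass from this weighted bound to the directional estimate~\eqref{e_tech_ingredient_HM} by a coarse-graining/averaging argument together with elementary linear algebra. For the first step I would differentiate the weak equation~\eqref{e_weak_solution_elliptic} in the direction $x_k$; using $-L\varphi = f - \int f\,d\mu_\Lambda$ and the identity $\nabla_k(L\varphi) = L(\nabla_k\varphi) - (\Hess H\,\nabla\varphi)_k$ (the Helffer--Sj\"ostrand / Witten-Laplacian representation), one obtains a system for $u_k := \nabla_k\varphi$ of the form $\mathcal L\, u + (\Hess H)\,u = \nabla f$, where $\mathcal L$ acts diagonally. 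Testing against a suitable vector of weights $w = (w_k)_{k\in\Lambda}$ (chosen to decay algebraically in $\dist(k,\supp f)$, e.g. $w_k \sim (1+\dist(k,\supp f))^{-(d+\alpha_{\ref{e_alg_decay}})}$ or a truncation thereof) and using the uniform PI~\eqref{sg} to control the Dirichlet form, one should arrive at a weighted energy inequality of the schematic shape
\begin{align*}
  \sum_{k\in\Lambda} w_k \int |\nabla_k\varphi|^2\,d\mu_\Lambda
  \;\lesssim\; \sum_{k\in\Lambda} w_k \int |\nabla_k f|^2\,d\mu_\Lambda
  \;+\; (\text{cross terms from }\Hess H).
\end{align*}
The off-diagonal part of $\Hess H$ is exactly $2M_{ij}$, so the cross terms are governed by $\sum_j |M_{kj}|\,w_j$; here the algebraic decay~\eqref{e_alg_decay} of order $2d+\alpha_{\ref{e_alg_decay}}$ and the diagonal dominance~\eqref{e_diag_dominant} are what let one absorb these terms and close the estimate with constants independent of $\Lambda$ and $\omega$.

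The second step is to convert $\sum_k w_k \int|\nabla_k\varphi|^2$ into the pointwise-in-$i$ statement~\eqref{e_tech_ingredient_HM}. This is where the geometric averaging enters: rather than trying to localize directly (which would require the interaction to be small compared to $\varrho_{\ref{sg}}$, as in~\cite{bblMenz14}), I would partition $\Lambda$ into blocks at a range of scales/offsets, apply the weighted inequality on each block to get a matrix inequality relating the block-averaged energies of $\nabla\varphi$ and $\nabla f$, and then average over the choice of block decomposition to "blur" the block boundaries (cf.\ Figure~\ref{f_average}). The upshot should be a matrix $A = (A_{ij})$, with $A_{ij}$ decaying algebraically in $|i-j|$, such that the vector $V_i := \int|\nabla_i\varphi|^2 d\mu_\Lambda$ satisfies $V \le A\,W$ componentwise, where $W_j := \int|\nabla_j f|^2 d\mu_\Lambda$ is supported on $\supp f$. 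Reading off the $i$-th component and using $|A_{ij}| \lesssim (1+|i-j|)^{-(2d+\alpha_{\ref{e_alg_decay}})}$ together with $\mathrm{dist}(i,\supp f) \le |i-j|$ for $j\in\supp f$ then yields
\begin{align*}
  V_i \;\le\; \sum_{j\in\supp f} A_{ij} W_j
     \;\lesssim\; \frac{1}{(1+\dist(i,\supp f))^{2d+\alpha_{\ref{e_alg_decay}}}}\,\sum_{j\in\supp f} W_j
     \;\le\; \frac{|\supp f|}{(1+\dist(i,\supp f))^{2d+\alpha_{\ref{e_alg_decay}}}} \int |\nabla f|^2 d\mu_\Lambda,
\end{align*}
which is exactly~\eqref{e_tech_ingredient_HM}.

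The main obstacle, I expect, is controlling the cross terms in the weighted energy inequality uniformly, i.e.\ making the averaging genuinely quantitative. The difficulty is that at long range the off-diagonal interactions $M_{ij}$ are not small relative to the Poincar\'e constant, so one cannot simply perturb off the weak-interaction case of~\cite{bblMenz14}; the averaging over coarse-grainings has to be set up so that the error incurred at each block boundary is summable against the weights $w_k$, which forces the decay exponent in $M_{ij}$ to be at least $2d+\alpha$ (one factor of $d+\alpha$ for summability over the lattice and one to preserve the target decay rate). Verifying that the averaged matrix inequality retains the correct algebraic decay of $A_{ij}$ — rather than degrading to an uncontrollably small exponent, as happens with the finite-speed-of-propagation arguments of~\cite{La95,bblYoshida01} — is the crux, and I would organize it via the two auxiliary lemmas referenced after the statement (handling, respectively, the weighted gradient bound and the combinatorics of counting block-membership multiplicities).
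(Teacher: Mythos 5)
Your high-level picture is in the right ballpark (Witten-Laplacian representation, block averaging to blur boundaries, a matrix inequality whose inverse has controlled decay, then read off), but there are two genuine gaps in the route you propose, both of which the paper itself flags.

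First, your Step~1 --- differentiating the weak equation and testing the resulting system $\mathcal L u + \Hess H\, u = \nabla f$ against a \emph{fixed} vector of algebraically decaying weights $w_k$ to produce a scalar weighted energy inequality --- is precisely the Bodineau--Helffer strategy (their weighted $L^2$ estimate, eq.~(3.13) of~\cite{bblBodineauHelffer}), which the paper explicitly sets aside: in the present setting of algebraically decaying interactions, balancing the parameters in that weighted inequality does not yield a decay exponent high enough to close the argument. The paper's actual Step~1 is different: starting from $\int \nabla_j f\,\nabla_j\varphi\,d\mu_\Lambda = 0$ for $j\in S$ with $S\cap\supp f=\emptyset$, one integrates by parts and invokes a Bakry--\'Emery-type inequality (eq.~(1.9) of~\cite{bblLedoux}, applied to the conditional measure $\mu_S$) to obtain the \emph{set-level} inequality of Lemma~\ref{p_bl0_arbitrary_sets}: $\varrho_{\ref{sg}}\sum_{i\in S}\int|\nabla_i\varphi|^2 \leq \sum_{i\in S,\,j\notin S}\tfrac{|M_{ij}|}{2}\bigl(\int|\nabla_i\varphi|^2 + \int|\nabla_j\varphi|^2\bigr)$, with no weights chosen at this stage. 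The weights emerge only afterwards, from averaging this inequality over all placements $\ell\in B_R(k)$ of a ball of radius $R$, reorganizing as $p_i,q_i,\kappa_{ki}$, and passing to the coarse lattice $2R\Z^d$.

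Second, you attribute the ability to "absorb the cross terms and close the estimate" to the assumed diagonal dominance~\eqref{e_diag_dominant} of $M$, but the paper is explicit (remark after Theorem~\ref{p_sg_dc}) that~\eqref{e_diag_dominant} is \emph{not} used in the proof of Theorem~\ref{p_sg_dc}. The diagonal dominance that closes the argument is a different one: for the coarse-grained matrix $A$ on $2R\Z^d$, whose off-diagonal entries $\overline\kappa_{kj}$ scale like $R^{-(\overline\alpha_{\ref{e_alg_decay}}-\epsilon)}(1+|k-j|)^{-(d+\epsilon)}$ (Lemma~\ref{p_coefficients}), so that diagonal dominance holds for $R$ large enough regardless of the size of $M$. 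Relatedly, your "elementary linear algebra" step hides the real content: one does not directly obtain $V\leq AW$ componentwise; rather one first gets $\tilde A\Phi \lesssim \sum_{k\in B_f}e_k\int|\nabla f|^2$, then must verify that $\tilde A$ is diagonally dominant to invoke~\cite[Prop.~3.5]{bblMenz14}, which gives invertibility, nonnegativity of $\tilde A^{-1}$, and the bound $(\tilde A^{-1})_{ij}\lesssim (1+|i-j|)^{-(2d+\alpha_{\ref{e_alg_decay}})}$ coming from the second bound in~\eqref{e_estimate_block_interaction}. Only then can one sum over the $\leq|\supp f|$ elements of $B_f$ and pull back from the sub-lattice to individual sites.

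What you do have right: the insight that one should average before localizing (to avoid the weak-interaction hypothesis of~\cite{bblMenz14}), the observation that the bulk of a block scales like $R^d$ while the boundary scales like $R^{d-1}$, and the heuristic that $2d+\alpha$ decay of $M_{ij}$ is needed --- one factor of $d$ for lattice summability and the rest to preserve the target exponent. These are all correct and essential, but the order of operations (weights first versus blocks first) and the role of diagonal dominance need to be set right before the argument closes.
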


Using this estimate in~\eqref{e_decay_correlations_cauchy_schwarz} we get 
\begin{align*}
  \cov_{\mu_{\Lambda}}(g,f) & \lesssim   \frac{|\supp f|^{\frac{1}{2}} |\supp g|^{\frac{1}{2}}}{\left( 1+ \dist (\supp f , \supp g)\right)^{2d+\alpha_{\ref{e_alg_decay}}}}  \left(  \int  |\nabla f|^2 d \mu_{\Lambda} \right)^{\frac{1}{2}} \left( \int  | \nabla g|^2 \ d \mu_{\Lambda} \right)^{\frac{1}{2}} ,
\end{align*}
which is the desired estimate of Theorem~\ref{p_sg_dc}. Therefore one only has to show Theorem~\ref{p_directional_poincare} in order to proof Theorem~\ref{p_sg_dc}. \\

In the derivation of Lemma~\ref{p_directional_poincare}, we use a similar procedure as was used in the derivation of the estimate~\eqref{e_tech_ingredient_BH} in~\cite{bblBodineauHelffer}. A preliminary inequality is refined by an averaging procedure (see the proof of Theorem 7 in~\cite{bblBodineauHelffer} and Lemma~\ref{p_site_estimate} from below). However, on a technical level the derivation is quite different. Compared to~\cite{bblBodineauHelffer} we need more sophisticated estimates, but obtain better control of the decay of the correlations in relation to the decay of the interaction strength~$M_{ij}$. This allows us to consider the case of algebraically-decaying, infinite-range interaction.
\\

The proof of Theorem~\ref{p_directional_poincare} is given in Section~\ref{ss_proof}. Before proceeding to the proof, let us explain why we call the estimate~\eqref{e_tech_ingredient_HM} directional Poincar\'e inequality (see also~\cite[Theorem 2.7]{bblMenz14}). For this let us recall a well-known equivalent formulation of the Poincar\'e inequality that we will use in the sequel (see e.g.~\cite[Proposition 1.3]{bblLedoux} and~\cite[Section 7]{OttoVillani}).
\begin{lem}\label{p_dual_poincare}
A probability measure $\mu$ satisfies a Poincar\'e inequality with constant $\varrho$ if and only if for any $f$ and $\varphi$ solving \eqref{e_weak_solution_elliptic}, we have
\begin{align}\label{e_dual_PI}
	\int |\nabla \phi|^2 d\mu \leq \frac{1}{\varrho^2} \int |\nabla f|^2 d\mu.
\end{align}
\end{lem}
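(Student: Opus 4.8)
The claim is the well-known equivalence: $\mu$ satisfies $\var_\mu(h) \leq \varrho^{-1}\int|\nabla h|^2 d\mu$ for all $h$ if and only if, whenever $\varphi$ solves $-L\varphi = f - \int f\,d\mu$ in the weak sense \eqref{e_weak_solution_elliptic}, one has $\int|\nabla\varphi|^2 d\mu \leq \varrho^{-2}\int|\nabla f|^2 d\mu$. The strategy is two short implications, both built on testing the weak formulation against well-chosen functions and using Cauchy--Schwarz together with the Poincaré inequality.

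First I would prove the forward direction ($\Rightarrow$). Assume the Poincaré inequality holds. Given $f$ with $\int f\,d\mu = 0$ (without loss of generality) and $\varphi$ solving \eqref{e_weak_solution_elliptic}, test the weak equation with $\xi = \varphi$ to get $\int|\nabla\varphi|^2 d\mu = \int \varphi f\,d\mu$. Apply Cauchy--Schwarz: $\int\varphi f\,d\mu \leq (\int \varphi^2 d\mu)^{1/2}(\int f^2 d\mu)^{1/2}$. Now note $\int\varphi\,d\mu = 0$ (take $\xi\equiv 1$ in the weak formulation, or recall $\varphi\in H^1(\mu)$ is normalized this way), so $\int\varphi^2 d\mu = \var_\mu(\varphi) \leq \varrho^{-1}\int|\nabla\varphi|^2 d\mu$. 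Combining, $\int|\nabla\varphi|^2 d\mu \leq \varrho^{-1/2}(\int|\nabla\varphi|^2 d\mu)^{1/2}(\int f^2 d\mu)^{1/2}$, hence $(\int|\nabla\varphi|^2 d\mu)^{1/2} \leq \varrho^{-1/2}(\int f^2 d\mu)^{1/2}$. It remains to bound $\int f^2 d\mu$ by $\varrho^{-1}\int|\nabla f|^2 d\mu$, which is again the Poincaré inequality applied to $f$ (since $\int f\,d\mu = 0$). This yields $\int|\nabla\varphi|^2 d\mu \leq \varrho^{-2}\int|\nabla f|^2 d\mu$, as desired.

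For the reverse direction ($\Leftarrow$), assume the dual estimate \eqref{e_dual_PI} holds for all such pairs, and let $h$ be arbitrary with $\int h\,d\mu = 0$. Solve $-L\varphi = h$ weakly, i.e.\ take $f = h$ in \eqref{e_weak_solution_elliptic}. Testing with $\xi = h$ gives $\int\nabla h\cdot\nabla\varphi\,d\mu = \int h^2 d\mu = \var_\mu(h)$. By Cauchy--Schwarz and then the hypothesis \eqref{e_dual_PI}, $\var_\mu(h) \leq (\int|\nabla h|^2 d\mu)^{1/2}(\int|\nabla\varphi|^2 d\mu)^{1/2} \leq (\int|\nabla h|^2 d\mu)^{1/2}\cdot\varrho^{-1}(\int|\nabla h|^2 d\mu)^{1/2} = \varrho^{-1}\int|\nabla h|^2 d\mu$, which is the Poincaré inequality.

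The only mildly delicate points — not genuine obstacles — are the density/regularity issues: one should first establish the identities for smooth compactly-supported-derivative functions (where integration by parts and the weak formulation are transparent) and then extend to all of $H^1(\mu)$ by density, which is legitimate since all quantities appearing are continuous in the $H^1(\mu)$ norm; and one must keep track of the normalization $\int\varphi\,d\mu = 0$, which is what lets the Poincaré inequality be applied to $\varphi$ itself. I expect the proof to be essentially these two paragraphs of testing-and-Cauchy--Schwarz; there is no substantive difficulty, which is why the excerpt cites it as "well-known."
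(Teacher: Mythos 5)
Your proof is correct and elementary, and it matches the standard argument sketched in the cited references (Ledoux, Otto--Villani): both directions reduce to testing the weak formulation \eqref{e_weak_solution_elliptic} against a well-chosen $\xi$ and combining Cauchy--Schwarz with the Poincar\'e inequality (or the dual estimate). The paper itself gives no proof, only the citation, so there is no paper argument to compare against.

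One small remark for completeness: in the reverse direction you take $f=h$ and assert a weak solution $\varphi$ exists; in the paper's setting the solvability of the Poisson problem is itself established via coercivity of the bilinear form, which already presupposes a Poincar\'e inequality, so strictly speaking the hypothesis of the lemma should be read as quantifying over all pairs $(f,\varphi)$ for which a solution exists (which is how the paper uses it, since only the forward direction is invoked in Section~\ref{s_sg_dc}). This is a feature of the lemma's phrasing rather than a gap in your argument, and you correctly flag the zero-mean normalization of $\varphi$, which is the only genuinely delicate point.
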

Comparing the estimates~\eqref{e_tech_ingredient_HM} and~\eqref{e_dual_PI}, one sees that~\eqref{e_tech_ingredient_HM} estimates each coordinate of the gradient of~$\phi$ separately. Summing up the estimate~\eqref{e_tech_ingredient_HM} over all coordinates yields the estimate~\eqref{e_dual_PI}. Therefore, the directional Poincar\'e inequality is a refinement of the equivalent formulation~\eqref{e_dual_PI} of the Poincar\'e inequality.

\subsection{Proof of Theorem~\ref{p_directional_poincare}: The directional Poincar\'e inequality by block averaging}\label{ss_proof}

This section is devoted to the proof of Theorem~\ref{p_directional_poincare}. As we have seen in Section~\ref{ss_witten}, this is the crucial ingredient for the proof of Theorem~\ref{p_sg_dc}. We deduce Theorem~\ref{p_directional_poincare} in two steps. In the first step, we deduce a weighted inequality for the gradient $\nabla \varphi$ (cf.~(2.14) in~\cite{bblMenz14} and~\eqref{e_lattice_ineq} from below). In the second step, the desired inequality is deduced through some elementary linear algebra (see \eqref{e_vector_ineq} ff.).\\


Because the interaction may not be small, i.e.~$C_{\ref{e_alg_decay}}$ maybe arbitrarily large, deriving the directional Poincar\'e inequality is very challenging. Hence, a naive derivation following the argument of~\cite{bblMenz14} fails (see discussion of Lemma~\ref{p_site_estimate} from below). Motivated by coarse-graining techniques (see for example~\cite{GORV}), it is a natural idea to circumvent this problem by deducing the desired weighted inequality on large blocks. However, these blocks still interact strongly with their neighbors at their boundaries. In order to deal with this, we average over all possible choices of blocks, which ``blurs'' the boundary between any two blocks, see Figure~\ref{f_average}. By this procedure one is able to compare the size of the bulk of the block, which scales like $R^d$, with the size of the boundary of the block, which scales like~$R^{d-1}$. Hence, one can control the interaction between nearest neighbor blocks choosing large enough blocks (see~\eqref{e_estimate_block_interaction} from below) and is able to deduce the desired weighted estimate~\eqref{e_lattice_ineq}. The remaining part of the proof, namely deducing the directional Poincar\'e inequality by some linear algebra, is then straightforward and unproblematic (see the argument after~\eqref{e_lattice_ineq}).



\subsubsection*{The idea of the proof}

In order to motivate what follows, we first show the proof of Theorem~\ref{p_directional_poincare} under the additional assumption
\begin{align} \label{e_ass_small_inter}
\max_{i}\sum_{j\neq i} |M_{ij}| \ll 1.
\end{align}
This means that the interaction is quite small between two different sites. In this case, the starting point of our proof is almost the same as in~\cite{bblMenz14} (cf.~the last equation before~\cite[(2.15)]{bblMenz14}). 

\begin{lem}\label{p_site_estimate}
Let $\phi$ be given by the solution of~\eqref{e_weak_solution_elliptic}. For any $l\in \Z^d$ with $l \notin \supp f $ it holds
\[\begin{split}
 &\left(\varrho_{\ref{sg}} -  \frac{1}{2}\sum_{j\neq l} |M_{lj}|\right)  \int |\nabla_l \varphi|^2 d\mu_\Lambda -  \sum_{j \neq l} \frac{|M_{lj}|}{2}  \int |\nabla_j \varphi|^2d\mu_\Lambda
		\leq 0.\end{split}\]
Additionally, it holds for any~$k \in \supp f$ that
\begin{align}\label{e_esti_nabla_k_0_sg}
  &\varrho_{\ref{sg}}^2  \int |\nabla_{k} \varphi|^2 d\mu_\Lambda  \leq 
  	\varrho_{\ref{sg}}^2  \int |\nabla \varphi|^2 d\mu_\Lambda  \leq \int | \nabla f|^2 d\mu_\Lambda.
\end{align}
\end{lem}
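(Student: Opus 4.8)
The plan is to differentiate the defining equation \eqref{e_weak_solution_elliptic} in a single direction and then use the uniform Poincar\'e inequality as a testing device. Concretely, I would first establish the second statement \eqref{e_esti_nabla_k_0_sg}: this is immediate from Lemma~\ref{p_dual_poincare}, since $\varphi$ solves the dual formulation of the Poincar\'e inequality, giving $\varrho_{\ref{sg}}^2\int|\nabla\varphi|^2 d\mu_\Lambda \leq \int|\nabla f|^2 d\mu_\Lambda$, and the contribution of any single coordinate is bounded by the full gradient $\int|\nabla_k\varphi|^2 d\mu_\Lambda \leq \int|\nabla\varphi|^2 d\mu_\Lambda$. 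For the first inequality, the key is the commutator identity for the Witten Laplacian: if $-L\varphi = f - \int f d\mu_\Lambda$, then differentiating at site $l$ gives the equation $-L(\nabla_l\varphi) + \sum_j (\nabla_l\nabla_j H)\nabla_j\varphi = \nabla_l f$, where for the purely quadratic two-body interaction $\nabla_l\nabla_j H = 2M_{lj}$ for $j\neq l$ and $\nabla_l\nabla_l H = \psi_l'' + 2M_{ll}$. Since $l\notin\supp f$, the right-hand side $\nabla_l f$ vanishes.

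Next I would test this differentiated equation against $\nabla_l\varphi$ with respect to $\mu_\Lambda$. The term $-\int \nabla_l\varphi\, L(\nabla_l\varphi)\, d\mu_\Lambda = \int |\nabla(\nabla_l\varphi)|^2 d\mu_\Lambda$ is nonnegative, and applying the Poincar\'e inequality to $\nabla_l\varphi$ (noting $\int \nabla_l\varphi\, d\mu_\Lambda = 0$, which follows since $\varphi$ has mean zero and we can integrate the $l$-derivative) gives $\int|\nabla(\nabla_l\varphi)|^2 d\mu_\Lambda \geq \varrho_{\ref{sg}} \var_{\mu_\Lambda}(\nabla_l\varphi) = \varrho_{\ref{sg}}\int|\nabla_l\varphi|^2 d\mu_\Lambda$. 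The convexity-type term involving $\psi_l'' + 2M_{ll}$ is nonnegative by \eqref{SSPot} together with diagonal dominance, so it can be dropped (it only helps). The cross terms $\int \sum_{j\neq l} 2M_{lj}\,\nabla_j\varphi\,\nabla_l\varphi\, d\mu_\Lambda$ are then controlled by Young's inequality: $|2M_{lj}\nabla_j\varphi\nabla_l\varphi| \leq |M_{lj}|(|\nabla_j\varphi|^2 + |\nabla_l\varphi|^2)$. Collecting terms yields exactly
\[
\varrho_{\ref{sg}}\int|\nabla_l\varphi|^2 d\mu_\Lambda \leq \frac{1}{2}\sum_{j\neq l}|M_{lj}|\int|\nabla_l\varphi|^2 d\mu_\Lambda + \frac{1}{2}\sum_{j\neq l}|M_{lj}|\int|\nabla_j\varphi|^2 d\mu_\Lambda,
\]
which rearranges to the claimed inequality.

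\textbf{Main obstacle.} The delicate point is justifying the manipulations rigorously in the weak (non-smooth) setting: strictly speaking one must verify that $\varphi \in H^1$ is regular enough that $\nabla_l\varphi$ again lies in the form domain and can be used as a test function, and that the integration by parts producing $\int|\nabla(\nabla_l\varphi)|^2 d\mu_\Lambda$ is valid. The standard route is to work first with the (smooth, compactly supported / finite-dimensional) approximations discussed in the setup section — where $\mu_\Lambda$ is log-concave up to bounded perturbation so elliptic regularity applies — derive the inequality there, and then pass to the limit by density; the paper's remark that ``we always work with smooth functions whose derivatives are bounded'' covers this. The other bookkeeping subtlety is confirming $\int \nabla_l\varphi\,d\mu_\Lambda = 0$ so that the Poincar\'e inequality applies directly to $\nabla_l\varphi$ rather than to $\nabla_l\varphi$ minus its mean; this holds because $\nabla_l\varphi = \nabla_l(\varphi - \int\varphi d\mu_\Lambda)$ and one integrates against the Gibbs density, whose $x_l$-derivative integrates to zero, modulo the boundary terms at infinity which vanish by the Gaussian-type decay from diagonal dominance.
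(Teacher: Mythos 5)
Your treatment of the second inequality \eqref{e_esti_nabla_k_0_sg} is correct and matches the paper (it is a direct consequence of Lemma~\ref{p_dual_poincare}), and the differentiated equation together with the identity $\int|\nabla(\nabla_l\varphi)|^2\,d\mu_\Lambda + \int\sum_k\nabla_l\varphi\,(\nabla_l\nabla_k H)\,\nabla_k\varphi\,d\mu_\Lambda = 0$ for $l\notin\supp f$ is exactly what the paper obtains in \eqref{e_sumzero} for $S=\{l\}$; applying Young's inequality to the off-diagonal cross terms is also the paper's route. However, there is a genuine gap in how you bound $\int|\nabla(\nabla_l\varphi)|^2\,d\mu_\Lambda$ from below. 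You assert $\int\nabla_l\varphi\,d\mu_\Lambda = 0$ so that the direct Poincar\'e inequality gives $\int|\nabla(\nabla_l\varphi)|^2\geq\varrho_{\ref{sg}}\int|\nabla_l\varphi|^2$. This is false in general: integrating by parts in $x_l$ gives $\int\nabla_l\varphi\,d\mu_\Lambda = \int\varphi\,\nabla_l H\,d\mu_\Lambda = \cov_{\mu_\Lambda}(\varphi,\nabla_l H)$ (since $\int\varphi\,d\mu_\Lambda = 0$), which is generically nonzero — for a Gaussian Hamiltonian $H=\tfrac12 x^T A x$ and $f(x)=x_i$ one checks $\int\nabla_l\varphi\,d\mu_\Lambda = (A^{-1})_{li}$. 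Without mean zero the PI only yields $\int|\nabla(\nabla_l\varphi)|^2\geq\varrho_{\ref{sg}}\var_{\mu_\Lambda}(\nabla_l\varphi)$, which is strictly weaker than what you need. A second issue is your claim that the diagonal term involving $\psi_l''+2M_{ll}$ is nonnegative by \eqref{SSPot}: condition \eqref{SSPot} only controls $\psi_{l,b}$ and $\psi_{l,b}'$, not $\psi_{l,b}''$, so $\psi_l''$ — and hence $\nabla_l\nabla_l H$ — can be arbitrarily negative, and this term cannot simply be dropped.

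The paper avoids both problems by replacing the direct PI on $\nabla_l\varphi$ with a Bochner/Ledoux-type inequality for the \emph{one-variable conditional measure} $\mu_{\{l\}}$ (freezing the other spins), namely $\varrho\int|\nabla_l g|^2\,d\mu_{\{l\}}\leq\int\bigl(|\nabla_l\nabla_l g|^2 + (\nabla_l\nabla_l H)\,|\nabla_l g|^2\bigr)\,d\mu_{\{l\}}$, and then integrating over the frozen spins, using that the uniform PI hypothesis applies to conditional measures as well. This inequality is the dual formulation $\varrho\int|\nabla g|^2\leq\int|Lg|^2$ combined with the one-dimensional Bochner identity; it is invariant under adding constants to $g$, so no mean-zero hypothesis is required. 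Moreover, the $(\nabla_l\nabla_l H)\,|\nabla_l\varphi|^2$ term appears naturally on the right-hand side and exactly cancels the $k=l$ term of your zero-identity (rather than being discarded by a nonnegativity claim that does not hold), after which one is left precisely with the off-diagonal cross terms and Young's inequality finishes as you describe.
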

For the proof of Lemma~\ref{p_site_estimate} note that the first estimate of Lemma~\ref{p_site_estimate} is a special case of Corollary~\ref{p_bl0} from below and the second estimate~\eqref{e_esti_nabla_k_0_sg} is a direct consequence of the equivalent formulation of the Poincar\'e inequality, i.e.~Lemma~\ref{p_dual_poincare}. The difference between the proof of Lemma~\ref{p_site_estimate} and the proof of the key estimate in~\cite{bblMenz14} is that we apply Young's inequality in place of Cauchy-Schwarz (see~\eqref{e_averaged} f.~from below).  This seemingly minor difference appears to be unavoidable and crucial for our argument but implies that we need a decay of the interaction of the order~$2d+\alpha_{\ref{e_alg_decay}}$ and not just of the order $d+\alpha_{\ref{e_alg_decay}}$. \\

Under the additional assumption~\eqref{e_ass_small_inter} the number
\begin{align}\label{e_BL_diagonal_dominance}
	\left(\varrho_{\ref{sg}} - \max_{i}\sum_j|M_{ij}|  \right) >0
\end{align}
is strictly positive.  Define, momentarily,
\[
	\rho \stackrel{\rm def}{=} \left(\varrho_{\ref{sg}} - \frac{1}{2}\max_{i}\sum_j|M_{ij}|  \right)
\]
and notice that by \eqref{e_ass_small_inter}
\[
 \rho >0. 
\]
Using the matrix~$A=(A_{ij})_{\Lambda \times \Lambda}$ given by the elements
 \begin{align*}
  A_{ij} \stackrel{\rm def}{=}
   \begin{cases}
     \rho\varrho_{\ref{sg}} , & \mbox{if }i=j \\ 
     -  \frac{\varrho_{\ref{sg}}}{2}|M_{ij}|, & \mbox{if $i\neq j$}.
   \end{cases}
\end{align*}
and the vector~$\Phi=(\Phi_i)_{\Lambda}$ given by the elements
\begin{align*}
  \Phi_i \stackrel{\rm def}{=} \int |\nabla_i \varphi|^2 d\mu_\Lambda
\end{align*}
one can deduce from the estimates of Lemma~\ref{p_site_estimate} the inequality
\begin{align}\label{e_matrix_inequality_weak_inter}
	A \Phi
		\leq \sum_{k \in \supp f}e_k \int | \nabla f|^2 d\mu_\Lambda,
\end{align}
where~$e_{k}$ denotes the Euclidean basis vector for the site~$k$ and the inequality is understood component-wise. From the additional assumption~(\ref{e_ass_small_inter}), one directly sees that the matrix~$A$ is diagonally dominant.  This allows us to apply~\cite[Proposition~3.5]{bblMenz14} to obtain that $A$ is invertible, $A^{-1}$ has all non-negative entries, and that
\[ (A^{-1})_{ij} \lesssim \frac{1}{1+|i-j|^{2d+\alpha_{\ref{e_alg_decay}}}}.\] 
Therefore, we can apply the inverse~$A^{-1}$ to the inequality~\eqref{e_matrix_inequality_weak_inter} and obtain
\begin{align*}
  \Phi \leq A^{-1} \sum_{k \in \supp f}e_{k} \int | \nabla f|^2 d\mu_\Lambda .
\end{align*}
Because the last inequality has to be understood component-wise, one gets that for~$i \in \Lambda$
\begin{align}\label{e_tech_ingredient_HM_simple_case}  
 \int |\nabla_i \varphi|^2 d\mu_\Lambda
 	= \Phi_i
	& \leq  \sum_{k \in \supp f} (A^{-1})_{ik} \int | \nabla f|^2 d\mu_\Lambda \\
  	& \leq C \  \frac{1}{1+\dist (\supp f , i)^{2d+\alpha_{\ref{e_alg_decay}}}}  \ |\supp f| \int | \nabla f|^2 d\mu_\Lambda,
\end{align}
which yields the statement of Theorem~\ref{p_directional_poincare} under the additional assumption~\eqref{e_ass_small_inter} of weak interactions. 

\subsubsection*{The proof of the directional Poincar\'e inequality, Theorem~\ref{p_directional_poincare}}


However, in the general situation of Theorem~\ref{p_sg_dc}, the assumption~\eqref{e_ass_small_inter} is not satisfied. The interaction~$|M_{ij}|$ can be arbitrarily large for close sites~$i$ and~$j$. We attempt to circumvent this problem by considering blocks, where one expects that the interaction~$|M_{ij}|$ between close sites~$i$ and $j$ will not play an important role. Therefore instead of using Lemma~\ref{p_site_estimate}, we use a slightly different estimate as a starting point of our argument:

\begin{lem}\label{p_bl0_arbitrary_sets}
Let $\phi$ be given by the solution of~\eqref{e_weak_solution_elliptic}. For any subset~$S\subset \Lambda$ with~$\supp f \cap S = \emptyset$ it holds
\begin{equation}
\begin{split}
 &\varrho_{\ref{sg}}  \sum_{i \in S} 	\int |\nabla_i \varphi|^2 d\mu_\Lambda - \sum_{i \in S} \sum_{j\notin S} \frac{|M_{ij}|}{2} \left[ \int |\nabla_i \varphi|^2 d\mu_\Lambda + \int |\nabla_j \varphi|^2d\mu_\Lambda\right]
		\leq 0.\end{split}\label{e_block_estimate_general_S}
            \end{equation}
\end{lem}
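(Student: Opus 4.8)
The plan is to derive \eqref{e_block_estimate_general_S} as a block version of the first estimate in Lemma~\ref{p_site_estimate}, which in turn is (as the text notes) a special case of the forthcoming Corollary~\ref{p_bl0}. The underlying mechanism is integration by parts against the Witten Laplacian combined with the Poincar\'e inequality applied to the components $\nabla_i\varphi$. First I would recall that differentiating the weak equation \eqref{e_weak_solution_elliptic} in the direction $x_i$ produces, for $i\notin\supp f$, an equation of the form $-L(\nabla_i\varphi) + \sum_j (\nabla_i\nabla_j H)\,\nabla_j\varphi = 0$, i.e.\ $-L(\nabla_i\varphi) = -\sum_{j\neq i} M_{ij}\nabla_j\varphi - (\text{diagonal term})\nabla_i\varphi$ in the purely quadratic case (the term $M_{ii}$ and $\psi_i''$ are absorbed into $L$ or handled via the convexity/diagonal-dominance, but for the estimate below only the cross terms matter). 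Testing this with $\nabla_i\varphi$ against $\mu_\Lambda$ gives
\[
\int |\nabla(\nabla_i\varphi)|^2\,d\mu_\Lambda = -\sum_{j\neq i} M_{ij}\int \nabla_j\varphi\,\nabla_i\varphi\,d\mu_\Lambda,
\]
and then the Poincar\'e inequality \eqref{sg} bounds the left side from below by $\varrho_{\ref{sg}}\var_{\mu_\Lambda}(\nabla_i\varphi)$; one checks the mean-zero normalization is not needed because variance only helps. Summing over $i\in S$ and applying Young's inequality $|ab|\le \tfrac12(a^2+b^2)$ to each product $\int \nabla_j\varphi\,\nabla_i\varphi\,d\mu_\Lambda$ yields exactly the structure of \eqref{e_block_estimate_general_S}: the diagonal contribution collects $\varrho_{\ref{sg}}\sum_{i\in S}\int|\nabla_i\varphi|^2$, and the error terms are $\sum_{i\in S}\sum_{j}\tfrac{|M_{ij}|}{2}[\int|\nabla_i\varphi|^2 + \int|\nabla_j\varphi|^2]$.

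The one point requiring care is that the sum over $j$ in the error should only run over $j\notin S$, not all $j\neq i$. This is the essential improvement over the naive site estimate and is where the block structure pays off. To see it, one uses that for the pair interactions with both endpoints inside $S$, i.e.\ $i,j\in S$, the contributions can be rearranged: writing the full sum $\sum_{i\in S}\sum_{j\neq i} M_{ij}\int\nabla_i\varphi\,\nabla_j\varphi$ and isolating the block-internal part $\sum_{i,j\in S, i\neq j} M_{ij}\int\nabla_i\varphi\,\nabla_j\varphi$, one recognizes (via the diagonal dominance \eqref{e_diag_dominant} and the fact that $\sum_{i\in S}\int|\nabla(\nabla_i\varphi)|^2\ge 0$ together with the structure of $L$ restricted to $S$) that the block-internal interaction combines with the Dirichlet form on $S$ in a way that leaves it nonnegative and hence can simply be dropped. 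Concretely: summing the per-site identities over $i\in S$ and moving all block-internal cross terms to the left, the left-hand side becomes $\sum_{i\in S}\int|\nabla(\nabla_i\varphi)|^2 + \sum_{i,j\in S}M_{ij}\int\nabla_i\varphi\,\nabla_j\varphi$, which one bounds below using $\int|\nabla(\nabla_i\varphi)|^2\ge \int|\nabla_i(\nabla_i\varphi)|^2 + \text{(rest)}$ and diagonal dominance to absorb the internal cross terms; what survives on the left after applying \eqref{sg} is $\varrho_{\ref{sg}}\sum_{i\in S}\int|\nabla_i\varphi|^2$, and on the right only the $j\notin S$ terms remain, handled by Young's inequality. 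For $i\in\supp f$ there is an extra forcing term $\nabla_i f$ in the differentiated equation, which is why the hypothesis $\supp f\cap S=\emptyset$ is imposed — it lets us kill all such terms.

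I expect the main obstacle to be bookkeeping the exact cancellation of the block-internal interaction terms against the Dirichlet form: one must verify that after summing over $i\in S$ and grouping, the quadratic form $\sum_{i\in S}\int|\nabla(\nabla_i\varphi)|^2 + \sum_{i,j\in S}M_{ij}\int\nabla_i\varphi\,\nabla_j\varphi$ is genuinely $\ge \varrho_{\ref{sg}}\sum_{i\in S}\int|\nabla_i\varphi|^2 - (\text{boundary terms})$, using only \eqref{e_diag_dominant} and \eqref{sg}, without accidentally double-counting the $M_{ij}$ with $i,j\in S$. The rest — differentiating the weak equation, the integration by parts, and Young's inequality — is routine. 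Since the text explicitly says Lemma~\ref{p_bl0_arbitrary_sets} follows from Corollary~\ref{p_bl0} as a special case applied to the set $S$, the cleanest write-up is to simply invoke that corollary once it is available; I would present the above as the self-contained direct argument in case the reader wants it, and note the deduction from Corollary~\ref{p_bl0}.
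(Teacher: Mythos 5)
Your broad outline — differentiate the elliptic equation, test with $\nabla_i\varphi$, sum over $i\in S$, apply Young's inequality to the surviving $S\times S^c$ cross terms — does follow the skeleton of the paper's proof. But the step you are most uncertain about is precisely where the argument stands or falls, and as written it has a genuine gap.

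The problem is your claim that ``the Poincar\'e inequality bounds the left side from below by $\varrho_{\ref{sg}}\var_{\mu_\Lambda}(\nabla_i\varphi)$; one checks the mean-zero normalization is not needed because variance only helps.'' Applying the Poincar\'e inequality to the function $\nabla_i\varphi$ on $\mu_\Lambda$ gives $\varrho_{\ref{sg}}\var_{\mu_\Lambda}(\nabla_i\varphi)\le\int|\nabla(\nabla_i\varphi)|^2\,d\mu_\Lambda$, and since $\var_{\mu_\Lambda}(\nabla_i\varphi)\le\int|\nabla_i\varphi|^2\,d\mu_\Lambda$ the variance is the \emph{smaller} quantity, so ``variance only helps'' is exactly backwards: a bound on the variance does \emph{not} give a bound on the second moment, which is what \eqref{e_block_estimate_general_S} requires. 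And $\nabla_i\varphi$ has no reason to be mean-zero. What the paper actually uses here is the Bakry--\'Emery/Brascamp--Lieb type inequality (Ledoux, eq.~(1.9) in the cited reference), applied to the \emph{conditional} Gibbs measure $\mu_S$ (spins outside $S$ frozen) and then integrated over the remaining variables; on that measure it reads
\begin{equation*}
\varrho_{\ref{sg}}\sum_{j\in S}\int|\nabla_j\varphi|^2\,d\mu_\Lambda
\le \int\Bigl(\sum_{i,j\in S}|\nabla_j\nabla_i\varphi|^2+\sum_{i,j\in S}\nabla_j\varphi\, M_{ij}\,\nabla_i\varphi\Bigr)d\mu_\Lambda,
\end{equation*}
with the honest second moment, not the variance, on the left. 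This inequality is derived through $\int(Lg)^2\,d\mu$ and the commutation $\nabla(Lg)=L\nabla g-\operatorname{Hess}(H)\nabla g$, not by applying the Poincar\'e inequality componentwise to $\nabla_i g$; your route does not produce it. Adding this to the summed-over-$S$ zero identity from the integration by parts cancels all block-internal terms exactly, leaving only $i\in S,\ j\notin S$ cross terms and the nonnegative Hessian term, after which Young's inequality finishes.

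Your alternative suggestion — that the internal cross terms can be absorbed via the diagonal dominance \eqref{e_diag_dominant} — is a genuinely different argument, and it does not yield the stated constant. Positive definiteness of $(M_{ij})_{i,j\in S}$ from diagonal dominance would give $\delta\sum_{i\in S}\int|\nabla_i\varphi|^2$ on the left, not $\varrho_{\ref{sg}}\sum_{i\in S}\int|\nabla_i\varphi|^2$, and there is no a priori relationship between $\delta$ and the hypothesized Poincar\'e constant $\varrho_{\ref{sg}}$; the lemma needs the latter. Finally, a small logical remark: you write that Lemma~\ref{p_bl0_arbitrary_sets} ``follows from Corollary~\ref{p_bl0} as a special case,'' but the dependency in the paper runs the other way — Corollary~\ref{p_bl0} is the special case $S=B_R(\ell)$ of Lemma~\ref{p_bl0_arbitrary_sets}, so invoking the corollary here would be circular.
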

We state the proof of Lemma~\ref{p_bl0_arbitrary_sets} in Section~\ref{ss_lemmas}. As a direct consequence of Lemma~\ref{p_bl0_arbitrary_sets} we get the following block estimate:
\begin{cor}\label{p_bl0}
Let $R\in \R_+$ and $\ell \in \Z^d$ such that $\supp f \cap B_R (\ell) = \emptyset $, where~$B_R(\ell)$ denotes the set
\begin{align*}
  B_{R} (\ell) = \left\{ i \in \Lambda \ | \ |i - \ell| \leq R  \right\}.
\end{align*}
  Then 
\begin{equation}
\begin{split}
 &\varrho_{\ref{sg}}  \sum_{i \in B_{R(\ell)}}
	\int |\nabla_i \varphi|^2 d\mu_\Lambda\\
	&- \sum_{i \in B_R(\ell)} \sum_{j\notin B_R(\ell)} \frac{|M_{ij}|}{2} \left[ \int |\nabla_i \varphi|^2 d\mu_\Lambda + \int |\nabla_j \varphi|^2d\mu_\Lambda\right]
		\leq 0.\end{split}\label{e_block_estimate}
            \end{equation}
\end{cor}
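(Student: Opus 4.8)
\textbf{Proof proposal for Corollary~\ref{p_bl0}.}
The plan is simply to specialize Lemma~\ref{p_bl0_arbitrary_sets} to the set $S = B_R(\ell)$. First I would observe that $B_R(\ell) = \{ i \in \Lambda \mid |i-\ell| \le R\}$ is by definition a subset of $\Lambda$, so it is an admissible choice of $S$ in the hypothesis of Lemma~\ref{p_bl0_arbitrary_sets}. Next I would note that the hypothesis $\supp f \cap B_R(\ell) = \emptyset$ of the corollary is precisely the condition $\supp f \cap S = \emptyset$ required by the lemma.

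With these two verifications in place, Lemma~\ref{p_bl0_arbitrary_sets} applied to $S = B_R(\ell)$ gives
\[
\varrho_{\ref{sg}} \sum_{i \in B_R(\ell)} \int |\nabla_i \varphi|^2 \, d\mu_\Lambda
	- \sum_{i \in B_R(\ell)} \sum_{j \notin B_R(\ell)} \frac{|M_{ij}|}{2} \left[ \int |\nabla_i \varphi|^2 \, d\mu_\Lambda + \int |\nabla_j \varphi|^2 \, d\mu_\Lambda \right] \le 0,
\]
which is exactly the asserted inequality~\eqref{e_block_estimate}. This completes the proof.

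There is no genuine obstacle here; the corollary is a direct restatement of Lemma~\ref{p_bl0_arbitrary_sets} for a particular family of sets. The only point worth making explicit is that the balls $B_R(\ell)$ are the sets over which the subsequent averaging argument (see the discussion preceding and following~\eqref{e_lattice_ineq}) will range, so it is convenient to record the specialized form now; all the analytic content lies in Lemma~\ref{p_bl0_arbitrary_sets} itself (proved in Section~\ref{ss_lemmas}) and in the block-averaging step that follows.
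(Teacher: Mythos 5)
Your proof is correct and matches the paper's approach exactly: the paper introduces Corollary~\ref{p_bl0} as a ``direct consequence'' of Lemma~\ref{p_bl0_arbitrary_sets}, obtained by taking $S = B_R(\ell)$, which is precisely what you do. No gap here; the only analytic content lives in Lemma~\ref{p_bl0_arbitrary_sets}.
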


Fix a positive $R$ such that $R \notin \Z$ but $2R \in \Z$ to be determined later.  We choose $R$ in this way so that every point is in precisely one ball of radius $R$ centered on a point of the sub-lattice $2R\Z^d$. Now, we would like to apply the same linear algebra argument as outlined above on the level of blocks. Working on the sub-lattice $2R \Z^d$, one would define the vector~$\Phi=(\Phi_l)_{l\in2R \Z^d}$ according to
\begin{align*}
  \Phi_l = \sum_{i \in B_{R}(\ell)}
	\int |\nabla_i \varphi|^2 d\mu_\Lambda.
\end{align*}
Unfortunately, the estimate~\eqref{e_block_estimate} is still not well suited for this procedure: The corresponding matrix~$A$ on the block level would not be strictly diagonally dominant because the interaction~$M_{ij}$ between nearest neighbor blocks can be arbitrarily large. Even considering large blocks, i.e.~choosing $R \gg 1$ very large, would not help. \\

We circumvent this problem by an additional averaging process. Averaging the estimate~\eqref{e_block_estimate} over all choices~$\ell \in B_{R}(k)$ directly leads to the following statement.  
 
\begin{cor}\label{p_bl1}
Let $\phi$ be given by the solution of~\eqref{e_weak_solution_elliptic} and let $k\in 2R\Z^d$ be a site that satisfies~$\supp f \cap B_{2R}(k) = \emptyset$.  Then
\begin{equation}
  \label{e_bl1_1}
\begin{split}
 &\varrho_{\ref{sg}} \sum_{\ell \in B_{R}(k)} \sum_{i \in B_R(\ell)}
	\int |\nabla_i \varphi|^2 d\mu_\Lambda\\
	&- \sum_{\ell \in B_{R}(k)}\sum_{i \in B_R(\ell)} \sum_{j\notin B_R(\ell)} \frac{|M_{ij}|}{2} \left[ \int |\nabla_i \varphi|^2 d\mu_\Lambda + |\nabla_j \varphi|^2d\mu_\Lambda\right]
		\leq 0.\end{split}  
\end{equation}
\end{cor}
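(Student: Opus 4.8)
The plan is to obtain~\eqref{e_bl1_1} by summing the block estimate~\eqref{e_block_estimate} of Corollary~\ref{p_bl0} over all centers $\ell \in B_R(k)$. The only thing to check is that this application is legitimate, i.e.\ that each such $\ell$ satisfies the hypothesis $\supp f \cap B_R(\ell) = \emptyset$ needed to invoke Corollary~\ref{p_bl0}. Since $k$ satisfies $\supp f \cap B_{2R}(k) = \emptyset$ and, for $\ell \in B_R(k)$, the triangle inequality gives $B_R(\ell) \subset B_{2R}(k)$, we indeed have $\supp f \cap B_R(\ell) = \emptyset$ for every $\ell \in B_R(k)$. Hence Corollary~\ref{p_bl0} applies with center $\ell$ for each such $\ell$.

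First I would fix $k \in 2R\Z^d$ with $\supp f \cap B_{2R}(k) = \emptyset$ and write down inequality~\eqref{e_block_estimate} for each $\ell \in B_R(k)$:
\[
\varrho_{\ref{sg}} \sum_{i \in B_R(\ell)} \int |\nabla_i \varphi|^2 \, d\mu_\Lambda
	- \sum_{i \in B_R(\ell)} \sum_{j \notin B_R(\ell)} \frac{|M_{ij}|}{2}\left[ \int |\nabla_i \varphi|^2 \, d\mu_\Lambda + \int |\nabla_j \varphi|^2 \, d\mu_\Lambda \right] \leq 0.
\]
Then I would simply add these finitely many inequalities over $\ell \in B_R(k)$. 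Since $B_R(\ell)$ is a finite set for each $\ell$ (as $\Lambda$ is finite) and $B_R(k)$ contains finitely many lattice points, all sums are finite and the summation is term-by-term valid; a sum of finitely many nonpositive quantities is nonpositive. This yields exactly~\eqref{e_bl1_1}, modulo the harmless typo in the displayed statement where $\int |\nabla_j \varphi|^2 d\mu_\Lambda$ is abbreviated as $|\nabla_j \varphi|^2 d\mu_\Lambda$ inside the bracket.

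There is essentially no obstacle here: this corollary is a bookkeeping step that repackages Corollary~\ref{p_bl0}, deferring the real work (estimating the doubly/triply indexed interaction sum on the left, in particular comparing the $R^d$-sized bulk term against the $R^{d-1}$-sized boundary term after a change in the order of summation) to the subsequent argument leading to~\eqref{e_lattice_ineq}. The point of writing~\eqref{e_bl1_1} in this redundant-looking averaged form is precisely that, after swapping the order of the $\ell$- and $i$-summations, each site $i$ gets counted with a multiplicity proportional to the volume $|B_R(\cdot)| \sim R^d$, while a pair $(i,j)$ with $i \in B_R(\ell)$, $j \notin B_R(\ell)$ arises only when the ``cut'' between $i$ and $j$ falls across the boundary of $B_R(\ell)$, which happens for a number of centers $\ell$ that is controlled by the number of boundary sites, $\sim R^{d-1}$, times a factor depending on $|i-j|$. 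So the only care needed in the present step is to make sure no $\ell \in B_R(k)$ is excluded by the hypothesis of Corollary~\ref{p_bl0}, which we verified above; everything else is a direct summation.
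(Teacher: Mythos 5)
Your proof is correct and matches the paper's approach exactly: the paper also derives Corollary~\ref{p_bl1} by summing (``averaging'') the block estimate of Corollary~\ref{p_bl0} over all centers $\ell \in B_R(k)$. Your careful verification that $B_R(\ell) \subset B_{2R}(k)$ for $\ell \in B_R(k)$, so that the hypothesis $\supp f \cap B_R(\ell) = \emptyset$ of Corollary~\ref{p_bl0} holds, is a detail the paper leaves implicit but is worth making explicit.
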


By the additional averaging of Corollary~\ref{p_bl1} we take advantage of the fact that the cardinality of the boundary is smaller than the cardinality of the bulk. As can be seen in Figure~\ref{f_average}, the boundary of the block becomes blurred. Hence, the effect of strong interactions between the boundary of blocks is getting averaged out. Another way to understand this proceeding is that any choice of block decomposition of the system is artificial and has nothing to do with the actual behavior of the spin system. The solution of this problem is to average over all possible choices of the block decomposition leading to an averaged estimate like the one in Corollary~\ref{e_block_estimate}.  \\


Let us have a closer look at the estimate~\eqref{e_bl1_1}. In each sum, terms appear multiple times. Therefore by changing the order of summation, we see that the estimate~\eqref{e_bl1_1} is equivalent to 
\begin{equation}\label{e_bl1}
\varrho_{\ref{sg}}\sum_{i \in B_{2R}(k)} p_i \int |\nabla_i \varphi|^2 d\mu_\Lambda
	- \sum_{i \in B_{2R}(k)} q_i \int |\nabla_i \varphi|^2 d\mu_\Lambda
	- \sum_{ i \in\Lambda} \kappa_{ik} \int |\nabla_i \varphi|^2 d\mu_\Lambda
 		\leq 0,
\end{equation}
where the weights~$p_i,q_i$ and~$\kappa_{ij}$ give the frequency with which each term appears in~\eqref{e_bl1_1}. One can check that these weights are given by
\begin{equation}\begin{split}\label{e_coefficients}
	p_i &\stackrel{\text{def}}{=} |\{j \in B_{R}(k): i \in B_R(j)\}| = |B_{R}(k)\cap B_R(i)|,\\
	q_i &\stackrel{\text{def}}{=} \sum_{\ell \in B_{R}(k)\cap B_R(i)} \sum_{j \notin B_R(\ell)} \frac{|M_{ij}|}{2}, ~~~\text{ and }\\
	\kappa_{ki} &\stackrel{\text{def}}{=} \sum_{\ell \in B_{R}(k)\cap B_R(i)^c} \sum_{j \in B_R(\ell)} \frac{|M_{ij}|}{2}.
\end{split}\end{equation}
Though $p_i$ and $q_i$ depend on $k$, this dependence is weak (see Lemma~\ref{p_coefficients} below).   As such, for notational simplicity, we do not adorn them with a $k$ subscript.\\

\begin{figure}[t]
\centering
\begin{overpic}[scale=.410]{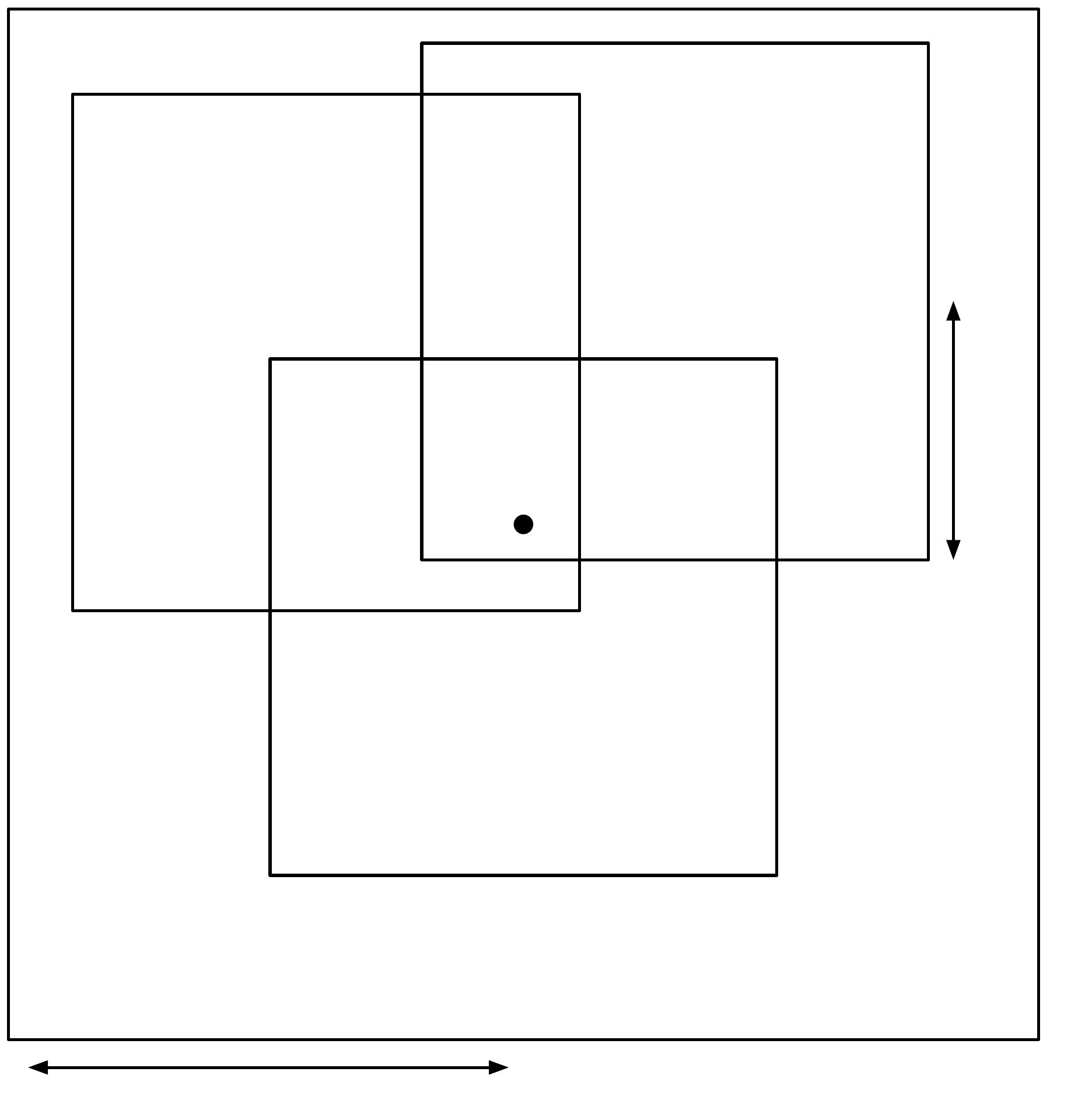}
\put(22,0){$2R$}
\put(86,65){$R$}
\put(43,55){$2Rj$}
\end{overpic}
~~~~~
\begin{overpic}[scale=.425]{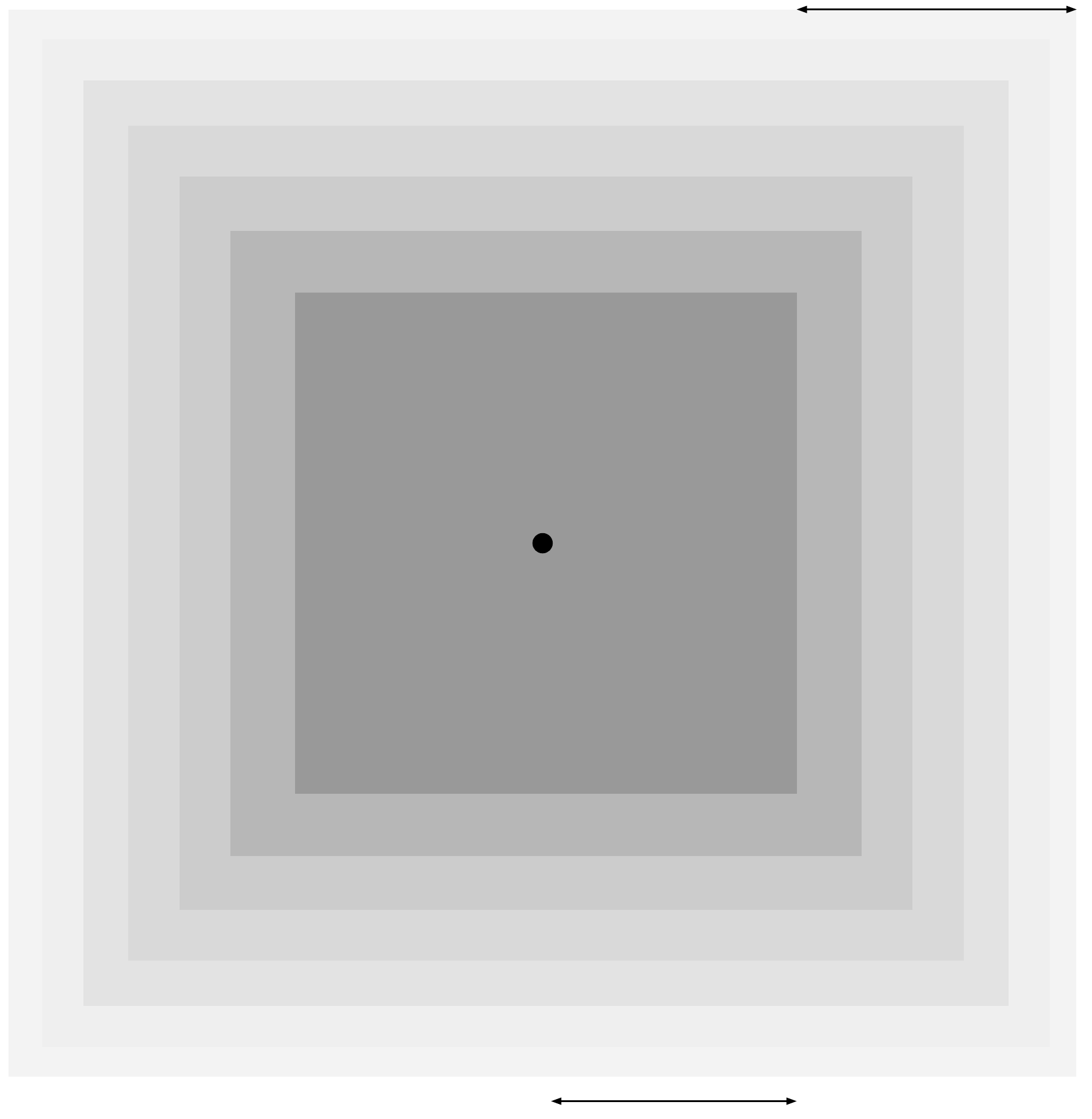}
\put(43,47){$2Rj$}
\put(60,4){$R$}
\put(85,100){$R$}
\end{overpic}
\caption{The figure on the left shows a sampling of the boxes being averaged over to create each term $\Phi_j$.  The shading in the figure on the right represents the number of times each point $i$ occurs in the averaging (represented below by $p_i$).}\label{f_average}
\end{figure}

The crucial point in the proof of Theorem~\ref{p_directional_poincare} is to analyze the behavior of the weights when $R$ is large.  We summarize this analysis in the following lemma.  We defer the proof of this lemma until Section~\ref{ss_lemmas}.
%
%

\begin{lem}\label{p_coefficients}
Let $p_i$, $q_i$, and $\kappa_{kj}$ be defined as in~\eqref{e_coefficients}.  If $i \in B_{R}(k)$, then we have
\begin{equation}\label{e_p_estimate}
	p_i \gtrsim R^d
.\end{equation}
If $i \in B_{2R}(k)$, then we have
\[ q_i \lesssim R^{d-1}.\]
Finally, for any $\epsilon > 0$, we have
\begin{align}\label{e_estimate_block_interaction}
\kappa_{kj} \lesssim
			\min\left\{\frac{R^{d-\overline\alpha_{\ref{e_alg_decay}}+\epsilon}}{[1+\dist(j, B_{2R}(k))]^{d+\epsilon}},\frac{R^{2d}}{[1+\dist(j, B_{2R}(k))]^{2d+\alpha_{\ref{e_alg_decay}}}}\right\},
\end{align}
where we define
\[
	\overline\alpha_{\ref{e_alg_decay}} \stackrel{\text{def}}{=} \min\{\alpha_{\ref{e_alg_decay}},1\}.
\]
\end{lem}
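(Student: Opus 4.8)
The plan is to estimate the three families of weights separately, exploiting that $B_R(\ell)$ and $B_{2R}(k)$ are $\ell^\infty$-balls, i.e. axis-parallel cubes, so that cardinalities and intersections are products of one-dimensional interval lengths.

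\textbf{The weight $p_i$.} For $i \in B_R(k)$ I would simply observe that $B_{R}(k)\cap B_R(i)$ contains $B_{R/2}(\frac{i+k}{2})$ (or a lattice point nearby): since both cubes have side $\sim 2R$ and their centers are within distance $R$, their intersection is a cube of side $\gtrsim R$ in each coordinate, hence contains $\gtrsim R^d$ lattice points (here I use $2R \in \Z$ and $R \notin \Z$ to avoid boundary-counting annoyances, exactly as the choice of $R$ was set up for). This gives~\eqref{e_p_estimate}.

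\textbf{The weight $q_i$.} Here I would first bound $q_i \leq |B_R(k)\cap B_R(i)| \cdot \max_{\ell} \sum_{j \notin B_R(\ell)} \frac{|M_{ij}|}{2}$ is \emph{too} lossy; instead I keep the sum over $\ell$. The key point is that $\sum_{j \notin B_R(\ell)} |M_{ij}|$ is nonzero only when $i$ is within distance comparable to the decay length of the boundary $\partial B_R(\ell)$; more precisely, writing $t = \dist(i, \Lambda \setminus B_R(\ell))$, the algebraic decay~\eqref{e_alg_decay} gives $\sum_{j\notin B_R(\ell)}|M_{ij}| \lesssim (1+t)^{-(d+\alpha_{\ref{e_alg_decay}})}$ by comparison with $\sum_{|z| \geq t}(1+|z|)^{-(2d+\alpha_{\ref{e_alg_decay}})}$. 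Summing this over $\ell \in B_R(k)\cap B_R(i)$: for fixed $i$, the number of centers $\ell$ for which $i$ sits at distance in $[t,t+1)$ of the cube boundary $\partial B_R(\ell)$ is $\lesssim R^{d-1}$ (one coordinate of $\ell$ is essentially pinned, the others are free over a range $\sim R$). Hence $q_i \lesssim R^{d-1}\sum_{t \geq 0}(1+t)^{-(d+\alpha_{\ref{e_alg_decay}})} \lesssim R^{d-1}$, using summability $d + \alpha_{\ref{e_alg_decay}} > d$.

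\textbf{The weight $\kappa_{kj}$.} This is the main obstacle and requires both bounds in the minimum. Recall $\kappa_{kj} = \sum_{\ell \in B_R(k)\setminus B_R(j)}\ \sum_{m \in B_R(\ell)} \frac{|M_{jm}|}{2}$ (I rename the inner index $m$ to avoid clashing with $j$). For the \emph{second}, crude bound: drop the restrictions, $\kappa_{kj} \leq |B_R(k)|\cdot \sum_{m: \text{within } \sim R \text{ of } B_{2R}(k)} |M_{jm}|$; when $\dist(j, B_{2R}(k)) =: D$, every such $m$ has $|j - m| \gtrsim D$, and there are $\lesssim R^d$ choices of $\ell$ and $\lesssim R^d$ choices of $m$ per $\ell$ but the $m$'s range over a set of size $\lesssim R^d$ total, giving $\kappa_{kj} \lesssim R^d \cdot R^d (1+D)^{-(2d+\alpha_{\ref{e_alg_decay}})}$, which is the second term. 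For the \emph{first}, sharper bound, the point is that for $\ell \in B_R(k)\setminus B_R(j)$ one needs $j$ to be \emph{outside} $B_R(\ell)$ while $m$ is inside, so $|j-m| \geq \dist(j, B_R(\ell)) =: t_\ell > 0$, and then $\sum_{m\in B_R(\ell)}|M_{jm}| \lesssim (1+t_\ell)^{-(d+\alpha_{\ref{e_alg_decay}})}$ as above, but we also do not need to pay the full $R^d$ per $\ell$ because only $\ell$'s with small $t_\ell$ contribute. Summing over $\ell$: group $\ell$ by the value $t_\ell \in [t, t+1)$; there are $\lesssim R^{d-1}(t+1)$ such $\ell$ in $B_R(k)$ (one coordinate confined to a window of size $\sim t$ near the relevant face, the rest free), each contributing $\lesssim (1+t)^{-(d+\alpha_{\ref{e_alg_decay}})}$, but also $t_\ell \lesssim$ the distance from $B_R(k)$'s relevant face to $j$, which forces $t \gtrsim D - R$. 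A careful bookkeeping — splitting according to whether $D \lesssim R$ or $D \gg R$, and using $\overline\alpha_{\ref{e_alg_decay}} = \min\{\alpha_{\ref{e_alg_decay}},1\}$ to absorb the loss from the extra factor $(t+1)$ against the decay, at the cost of an arbitrarily small $\epsilon$ — yields $\kappa_{kj} \lesssim R^{d - \overline\alpha_{\ref{e_alg_decay}} + \epsilon}(1+D)^{-(d+\epsilon)}$. I expect the delicate part to be exactly this interpolation between the $(t+1)$-many-$\ell$ count and the $(1+t)^{-(d+\alpha_{\ref{e_alg_decay}})}$ decay, and making the geometry of ``how many shifted cubes separate $j$ from $m$ by a given amount'' precise; everything else is comparison of sums with integrals and the repeated use of summability of $(1+|z|)^{-(d+\alpha)}$ over $\Z^d$.
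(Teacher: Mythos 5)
Your bounds for $p_i$ and $q_i$, and the crude $R^{2d}$ bound for $\kappa_{kj}$, are essentially the paper's arguments and are fine. The gap is in the sharp $\kappa_{kj}$ bound, specifically in the shell count. The number of $\ell\in B_R(k)$ with $t_\ell := \dist(j,B_R(\ell))\in[t,t+1)$ is $\lesssim R^{d-1}$ \emph{uniformly in $t$}, not $\lesssim R^{d-1}(t+1)$. Indeed, since $t_\ell = |j-\ell|_\infty - R$, the constraint $t_\ell\in[t,t+1)$ confines $\ell$ to a thickness-one shell $\{|\ell - j|_\infty\in[R+t,R+t+1)\}$; intersecting this shell with $B_R(k)$ (a cube of side $\sim R$) yields at most $\sim R^{d-1}$ lattice points, because the shell is a union of $2d$ faces in each of which one coordinate of $\ell$ is pinned to a window of width $O(1)$ (the shell thickness) while the remaining $d-1$ coordinates range over $\lesssim R$ values each. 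Your parenthetical "one coordinate confined to a window of size $\sim t$'' instead describes the \emph{cumulative} count $\#\{t_\ell\le t\}$, not the shell count, and this is exactly where the spurious $(t+1)$ enters. Notably you used the correct $R^{d-1}$ count for $q_i$, so this is an inconsistency.

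This is not a cosmetic loss: with the extra $(t+1)$, the $t$-sum gives $R^{d-1}\sum_{t\ge D}(1+t)^{1-(d+\alpha_{\ref{e_alg_decay}})}\lesssim R^{d-1}(1+D)^{-(d+\alpha_{\ref{e_alg_decay}}-2)}$, whose $D$-decay is $d+\alpha_{\ref{e_alg_decay}}-2<d+\epsilon$ unless $\alpha_{\ref{e_alg_decay}}>2$; so the argument fails for the full range $\alpha_{\ref{e_alg_decay}}>0$ that the lemma claims, and you cannot ``absorb the loss'' with $\overline{\alpha}_{\ref{e_alg_decay}}$ and a small $\epsilon$. A second, smaller, slip is the lower bound on $t_\ell$: since $B_R(\ell)\subset B_{2R}(k)$ for $\ell\in B_R(k)$, one has $t_\ell\ge D := \dist(j,B_{2R}(k))$, not merely $t_\ell\gtrsim D-R$; the clean bound $t_\ell\ge D$ is what lets you write $(1+t_\ell)^{-(d+\alpha_{\ref{e_alg_decay}})}\le (1+D)^{-(d+\epsilon)}(1+t_\ell)^{-(\alpha_{\ref{e_alg_decay}}-\epsilon)}$, after which, with the correct $R^{d-1}$ shell count, the remaining sum over the $\sim 2R$ admissible values of $t$ is $\lesssim R^{1-\overline{\alpha}_{\ref{e_alg_decay}}+\epsilon}$ and everything assembles to the stated $R^{d-\overline{\alpha}_{\ref{e_alg_decay}}+\epsilon}(1+D)^{-(d+\epsilon)}$. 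This ``pull out the decay first'' step is the piece of bookkeeping you left as a black box; it is simple once the count is right, but it does need to be spelled out.
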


The weight for the bulk, $(\rho_{\ref{sg}} p_i - q_i)$ when $i \in B_{R}(k)$, scales like $R^d$ whereas the other weights $\kappa_{ki}$, representing the interaction, scale like $R^{d-\overline\alpha_{\ref{e_alg_decay}}+\epsilon}$.  Therefore, for large $R$, one gets weak interaction analogous to~\eqref{e_BL_diagonal_dominance}.  This is the blurred boundary effect that we discussed above (cf.~Figure~\ref{f_average}).\\

Notice that bound~\eqref{e_p_estimate} does not hold uniformly for $i \in B_{2R}(k)\setminus B_{R}(k)$.  Thus, these terms will not help with the diagonal dominance that we require to use the argument outlined above.  As such, we drop those terms from~\eqref{e_bl1}.  From here, we now compute a simplified form of~\eqref{e_bl1} using the results of Lemma~\ref{p_coefficients}:
\[
\begin{split}
	0
		&\geq \varrho_{\ref{sg}}\sum_{i \in B_{R}(k)} p_i \int |\nabla_i \varphi|^2 d\mu_\Lambda
		- \sum_{i \in B_{2R}(k)} CR^{d-1} \int |\nabla_i \varphi|^2 d\mu_\Lambda
		- \sum_{i\notin B_{R}(k)} \kappa_{ki} |\nabla_i \varphi|^2 d\mu_\Lambda\\
		&\geq \varrho_{\ref{sg}}\sum_{i \in B_{R}(k)} (p_i- CR^{d-1}) \int |\nabla_i \varphi|^2 d\mu_\Lambda
		- \sum_{i\notin B_{R}(k)} \left(\kappa_{ki} + CR^{d-1} \I_{B_{2R}(k)}(i)\right) |\nabla_i \varphi|^2 d\mu_\Lambda,
\end{split}
\]
where $C$ is a universal constant.
Here we simply used the bounds on $q_i$ and $\kappa_{ki}$ in Lemma~\ref{p_coefficients} and split these sums into two parts: the terms in $B_{R}(k)$ and the terms in $B_{2R}(k)\setminus B_{R}(k)$.  
Using that $p_i\gtrsim R^d$ from Lemma~\ref{p_coefficients}, we may choose $R$ large enough that
\begin{equation}\label{e_lattice_ineq}
0	\geq \varrho_{\ref{sg}} \sum_{i \in B_{R}(k)} \frac{p_i}{2} \int |\nabla_i \varphi|^2 d\mu_\Lambda
		- \sum_{i\notin B_{R}(k)} \tilde\kappa_{ki} |\nabla_i \varphi|^2 d\mu_\Lambda,
\end{equation}
where, for ease, we have defined
\[
	\tilde\kappa_{ki} \stackrel{\text{def}}{=} \kappa_{ki} + CR^{d-1} \I_{B_{2R}(k)\setminus B_{R}(k)}(i)
.\]
Here $C$ is a universal constant independent of $R$.  Notice that $\tilde\kappa_{ki}$ satisfies the same upper bound as $\kappa_{ki}$,~\eqref{e_estimate_block_interaction}.  In other words,
\begin{equation}\label{e_tildekappa}
	\tilde\kappa_{kj} \lesssim
			\min\left\{\frac{R^{d-\overline\alpha_{\ref{e_alg_decay}}+\epsilon}}{[1+\dist(j, B_{2R}(k))]^{d+\epsilon}},\frac{R^{2d}}{[1+\dist(j, B_{2R}(k))]^{2d+\alpha_{\ref{e_alg_decay}}}}\right\}
.\end{equation}


With this inequality, we are ready to apply the same linear algebra argument as outlined above.  To make the argument simpler, we now work exclusively on the sub-lattice $2R\Z^d$ by looking at points on $\Z^d$ and scaling them by $2R$.  To this end, for $j, k \in \Z^d$ we define
\begin{equation}\label{e_block_defn}
\begin{split}
	\Phi_j &\stackrel{\text{def}}{=} \sum_{i \in B_{R}(2Rj)} \int |\nabla_i \varphi|^2 d\mu_\Lambda, ~~\text{ and }~~\\
	\overline{\kappa}_{kj} &\stackrel{\text{def}}{=} \max_{i \in B_{R}(2Rj)} \frac{\tilde\kappa_{(2Rk)i}}{R^d}
.\end{split}
\end{equation}
It follows from our estimates on $\tilde\kappa_{ij}$,~\eqref{e_tildekappa}, that
  \begin{align}
    	\overline\kappa_{kj} & \lesssim \frac{R^{-(\overline\alpha_{\ref{e_alg_decay}}-\epsilon)}}{(1 + \dist \left( B_{2R} (2Rk) , B_{2R}(2Rj) \right))^{d+ \epsilon}} \notag \\
    & \lesssim  \frac{R^{-(\overline \alpha_{\ref{e_alg_decay}}-\epsilon)}}{(1 + |k-j|)^{d+ \epsilon}}. \label{e_overlinekappa1}
  \end{align}
This is the estimate we will use to obtain diagonal dominance.  In addition, we have that
\begin{align}
	\overline\kappa_{kj} & \lesssim \frac{R^{d}}{(1 + \dist \left( B_{2R} (2Rk) , B_{2R}(2Rj) \right))^{2d+\alpha_{\ref{e_alg_decay}}}} \notag \\
		&\lesssim \frac{R^d}{(1 + |k-j|)^{2d+\alpha_{\ref{e_alg_decay}}}}. \label{e_overlinekappa2}
\end{align}
This is the estimate that we will use to obtain the correct rate of decay.\\

Now, we divide the equation~\eqref{e_lattice_ineq} by~$R^d$ and get, by using the estimate~\eqref{e_p_estimate} and the definitions in~\eqref{e_block_defn}, that
\begin{equation}\label{e_vector1}
	\varrho_{\ref{sg}} \Phi_k - C  \sum_{j\in \Z^d, j\neq k} \overline{\kappa}_{kj} \Phi_j \leq 0
,\end{equation}
for any point $k \in Z^d$ such that (see the assumptions of Corollary~\ref{p_bl1})
\begin{align*}
  \supp f \cap B_{2R}(2Rk) = \emptyset.
\end{align*}
 Here $C$ is a universal constant independent of $R$ and $k$.\\

Let us now consider the case~$k \in Z^d$ such that 
\begin{align*}
  \supp f \cap B_{2R}(2Rk) \neq \emptyset.
\end{align*}
In order to estimate~$\Phi_k$ in this situation, we cannot apply Corollary~\ref{p_bl1} and must proceed differently. In this situation, we may conclude from the Poincar\'e inequality, in the sense of Lemma~\ref{p_dual_poincare}, that
\[
	\Phi_k =  \sum_{i \in B_{R}(2Rj)} \int |\nabla_i \varphi|^2 d\mu_\Lambda
		\leq \sum_{i\in \Lambda} \int |\nabla_i \varphi|^2 d\mu_\Lambda
		\leq \frac{1}{\varrho_{\ref{sg}}^2} \int |\nabla f|^2 d\mu_\Lambda.
\]
The first inequality follows as $B_R \subset \Lambda$, and the second inequality is a result of Lemma \ref{p_dual_poincare}. This implies the inequality
\begin{equation}\label{e_vector2}
	\varrho_{\ref{sg}}^2 \Phi_{k} \lesssim \int |\nabla f|^2 d\mu_\Lambda
.\end{equation}

We define the set~$B_f$ according to
\begin{align}\label{e_def_M}
B_f \stackrel{\rm def}{=} \left\{ k \in \Z^d \ | \   \supp f \cap B_{2R}(2Rk) \neq \emptyset \right\}.
\end{align}

Because for any~$i$ the number~$\Phi_{k}\geq 0$, it follows from~\eqref{e_vector2} that for any~$k \in B_f$
\begin{equation}\label{e_vector3}
	\varrho_{\ref{sg}}^2 \Phi_k - C  \sum_{j\in \Z^d, j\neq k} \varrho_{\ref{sg}} \overline{\kappa}_{kj} \Phi_j \lesssim \int |\nabla f|^2 d\mu_\Lambda.
\end{equation}
Then by combining~\eqref{e_vector1} and~\eqref{e_vector3}, we obtain
\begin{equation}\label{e_vector_ineq}
	A \Phi \lesssim  \sum_{k \in B_f} e_{k} \int |\nabla f|^2 d\mu_\Lambda.
\end{equation}
Here, $e_j$ is the vector with zeros in all entries except in the $j$th entry where there is a one. The matrix $A$ is given by
\[
	A_{ij} \stackrel{\rm def}{=} \begin{cases}
		\varrho_{\ref{sg}}^2, ~~~&\text{ if } i = j,\\
		-C \varrho_{\ref{sg}} \overline{\kappa}_{ij}, ~~~&\text{ if } i \neq j
	,\end{cases}
\]
with $C$ is as in \eqref{e_vector1}.  This vector inequality is to be understood component-wise.\\

As a result of equation~\eqref{e_overlinekappa1}, we may fix $R$ large enough, independently of $f$ and $g$, that this matrix is diagonally dominant.  Since we have chosen $R$ as such, we may absorb any $R$-terms into the universal constant implied by the $\lesssim$-notation.  We do this from now on.\\

This allows us to apply~\cite[Proposition~3.5]{bblMenz14} to obtain that $A$ is invertible, $A^{-1}$ has all non-negative entries, and that, by~\eqref{e_overlinekappa2},
\[
	(A^{-1})_{ij} \lesssim \frac{1}{[1+|i-j|]^{2d+\alpha_{\ref{e_alg_decay}}}}.
\]
This, along with~\eqref{e_vector_ineq}, gives us
\[
	\Phi \lesssim A^{-1} \sum_{k \in B_f} e_{k}  \int |\nabla f|^2 d\mu_\Lambda
\]
and hence, for any~$l \in \Z^d$
\begin{align}
	\Phi_l  & \lesssim \sum_{k \in B_f} \left( A^{-1} \right)_{lk} \int |\nabla f|^2 d\mu_\Lambda \notag \\
	& \lesssim \sum_{k \in B_f} \frac{1}{\left( 1+|k - l|\right)^{2d+\alpha_{\ref{e_alg_decay}}}} \int |\nabla f|^2 d\mu_\Lambda \notag \\
	& \lesssim  \left(\max_{k \in B_f} \frac{|B_f|}{\left( 1+ |k-l|\right)^{2d+\alpha_{\ref{e_alg_decay}}}} \right)    \int |\nabla f|^2 d\mu_\Lambda \notag \\
	& \lesssim  \frac{|\supp f|}{\left( 1+ \dist (2Rl,\supp f)\right)^{2d+\alpha_{\ref{e_alg_decay}}}}  \   \int |\nabla f|^2 d\mu_\Lambda    \label{e_nabla_phi_estimate}.
\end{align}
Here, we used in the last step that by the definition~\eqref{e_def_M} of~$B_f$, it clearly holds that
  \begin{align*}
    |B_f| \leq |\supp f|.
  \end{align*}
In addition, we used, without verification, that
\begin{align}\label{e_estimate_B_f_terms}
	\max_{k \in B_f} \frac{1}{\left( 1+ |k-l|\right)^{2d+\alpha_{\ref{e_alg_decay}}}}
		\lesssim  \frac{1}{\left( 1+ \dist (2Rl,\supp f)\right)^{2d+\alpha_{\ref{e_alg_decay}}} }.
\end{align}
\mbox{}\\

We postpone the verification of~\eqref{e_estimate_B_f_terms} and show how to conclude the proof of Theorem~\ref{p_directional_poincare}. For any $~i \in \Lambda$, there is~$l \in \Z^d$ such that $i \in B_{R}(2Rl)$. Using now the estimate~\eqref{e_nabla_phi_estimate} yields
\begin{align}
\int |\nabla_i \varphi |^2 d \mu_{\Lambda} & \leq \sum_{k \in B_{R}(2Rl)} \int |\nabla_k \varphi |^2 d \mu_{\Lambda}  = \Phi_l \notag \\
& \lesssim \frac{|\supp f|}{\left( 1+ \dist (2Rl,\supp f)\right)^{2d+\alpha_{\ref{e_alg_decay}}} } \int |\nabla f|^2 d\mu_\Lambda \notag \\
& \lesssim \frac{|\supp f|}{\left( 1+ \dist (i,\supp f)\right)^{2d+\alpha_{\ref{e_alg_decay}}} }  \int |\nabla f|^2 d\mu_\Lambda,
 \label{e_final_estimation_nabla_i_phi}
\end{align}
where we used in the last step that
\begin{align}
	\frac{1}{\left( 1+ \dist (2Rl,\supp f)\right)^{2d+\alpha_{\ref{e_alg_decay}}} }
		\lesssim \frac{1}{\left( 1+ \dist (i,\supp f)\right)^{2d+\alpha_{\ref{e_alg_decay}}} } . \label{e_estiamte_center_block_arbitrary_block_element}
\end{align}

We want to note that~\eqref{e_final_estimation_nabla_i_phi} already contains the the statement of Theorem~\ref{p_directional_poincare}. Therefore in order to complete the proof of Theorem~\ref{p_directional_poincare} it is only left to verify the estimates~\eqref{e_estimate_B_f_terms} and~\eqref{e_estiamte_center_block_arbitrary_block_element}.\\

Let us now turn to the verification of the estimate~\eqref{e_estimate_B_f_terms}. It follows from the triangle inequality that for any element~$k \in B_f$ 
\begin{align*}
  \dist (2Rl,\supp f) &\leq |2Rl -2Rk| + \dist(2Rk, \supp f) \\
& \leq |2Rl-2Rk| + R , 
\end{align*}
where we used in the last step the definition~\eqref{e_def_M} of the set~$B_f$. This yields in particular that
\begin{align*}
  \dist (2Rl,\supp f) &\leq \min_{k \in B_f} |2Rl - 2Rk| + R .
\end{align*}
Therefore we get 
\begin{align*}
  \max_{k \in B_f} \frac{1}{\left( 1+ |k-l|\right)^{2d+\alpha_{\ref{e_alg_decay}}}} & \leq \max_{k \in B_f}  \frac{(2R)^{2d+\alpha_{\ref{e_alg_decay}}}}{(2R +  |2Rk-2Rl| )^{2d+\alpha_{\ref{e_alg_decay}}}} \\
&  \leq \frac{(2R)^{2d+\alpha_{\ref{e_alg_decay}}}}{(R +  \dist(2Rl, \supp f) )^{2d+\alpha_{\ref{e_alg_decay}}}} \\
& \leq \frac{(2R)^{2d+\alpha_{\ref{e_alg_decay}}}}{(1 +  \dist(2Rl, \supp f) )^{2d+\alpha_{\ref{e_alg_decay}}}},
\end{align*}
which is the desired estimate~\eqref{e_estimate_B_f_terms}.\\

Finally, it is only left to deduce the estimate~\eqref{e_estiamte_center_block_arbitrary_block_element}. Because~$i \in B_{R}(2Rl)$, we have by the triangle inequality that
\begin{align*}
  \dist(i, \supp f) \leq \dist(i, 2Rl) + \dist(2Rl, \supp f)  \leq R+ \dist (2Rl, \supp f).
\end{align*}
Hence we get
\begin{align*}
     \frac{1}{\left( 1+ \dist (2Rl,\supp f)\right)^{2d+\alpha_{\ref{e_alg_decay}}} } & \leq \frac{R^{2d+\alpha_{\ref{e_alg_decay}}}}{\left( 2R+ 2R\dist (2Rl,\supp f)\right)^{2d+\alpha_{\ref{e_alg_decay}}} }  \\
 & \leq \frac{(2R)^{2d+\alpha_{\ref{e_alg_decay}}}}{\left( R+ \dist (i,\supp f)\right)^{2d+\alpha_{\ref{e_alg_decay}}} }\\
& \leq \frac{(2R)^{2d+\alpha_{\ref{e_alg_decay}}}}{\left( 1+ \dist (i,\supp f)\right)^{2d+\alpha_{\ref{e_alg_decay}}}},
\end{align*}
which is the desired estimate~\eqref{e_estiamte_center_block_arbitrary_block_element} and completes the proof of Theorem~\ref{p_directional_poincare}.

\subsection{Proof of auxiliary Lemma~\ref{p_bl0_arbitrary_sets} and Lemma~\ref{p_coefficients}}\label{ss_lemmas}

In this section we provide the two missing ingredients of the proof of Theorem~\ref{p_directional_poincare}, namely Lemma~\ref{p_bl0_arbitrary_sets} and Lemma~\ref{p_coefficients}. We first prove Lemma~\ref{p_bl0_arbitrary_sets} by arguing similarly as in~\cite{bblMenz14}, though here we do so on arbitrary sets~$S\subset \Lambda$ with~$\supp f \cap S = \emptyset$ and we apply the Cauchy-Schwarz inequality in a different way.
\begin{proof}[Proof of Lemma~\ref{p_bl0_arbitrary_sets}:] 
Because the set $S\subset \Lambda$ satisfies~$\supp f \cap S = \emptyset$, it holds for any site~$j \in S$ that $\nabla_j f = 0$ and therefore
\begin{align*}
  0 = \int \nabla_j f \nabla_j \phi d\mu_\Lambda.
\end{align*}
Applying integration by parts, we obtain
\begin{equation}\label{e_ibp_ss}
\begin{split}
\int \nabla_j f \nabla_j \phi d\mu_\Lambda
	&= - \int \nabla_j\nabla_j \varphi \left( f - \int f d\mu_\Lambda\right) d\mu_\Lambda\\
		&~~~~+ \int \nabla_j \varphi \nabla_j H \left( f - \int f d\mu_\Lambda\right) d\mu_\Lambda
.\end{split}
\end{equation}
Now applying~\eqref{e_weak_solution_elliptic} to the terms on the right hand side of \eqref{e_ibp_ss} with $\xi = \nabla_j\phi\nabla_j \varphi$ and $\xi = \nabla_j \nabla_j H$, respectively, yields
\begin{equation}\label{e_ibp_ss_step2}
\begin{split}
\int \nabla_j f \nabla_j \varphi d\mu_\Lambda
	&= - \int \langle\nabla (\nabla_j\nabla_j \varphi) , \nabla \varphi\rangle d\mu_\Lambda
		+ \int \langle\nabla(\nabla_j \varphi \nabla_j H) , \nabla \varphi\rangle d\mu_\Lambda \\
	&= - \int \langle\nabla (\nabla_j\nabla_j \varphi) , \nabla \varphi\langle d\mu_\Lambda
		+ \int \nabla_j H \langle\nabla(\nabla_j \varphi) , \nabla \varphi\rangle d\mu_\Lambda \\
		&~~~ + \int \nabla_j \varphi \langle\nabla(\nabla_j H), \nabla \varphi\rangle d\mu_\Lambda \\
\end{split}
\end{equation}
Here, $\langle\cdot,\cdot\rangle$ is the scalar product.  We focus on the second term on the right hand side.  Here, using integration by parts we compute
\[\begin{split}
	\int \nabla_j H \langle\nabla(\nabla_j \varphi), \nabla \varphi \rangle d\mu_\Lambda
		&= - \frac{1}{Z} \int \langle\nabla(\nabla_j \varphi) , \nabla \varphi \rangle \nabla_j\left(\exp\{-H(x)\}\right) dx\\
		&= \int \langle\nabla\left(\nabla_j \nabla_j \varphi\right),\nabla \varphi \rangle d\mu_\Lambda
			+ \int \langle\nabla\left(\nabla_j \varphi\right) , \nabla\left( \nabla_j \varphi\right)\rangle d\mu_\Lambda
.\end{split}\]
Now, we insinuate this equality into \eqref{e_ibp_ss_step2} and sum over all elements~$j \in S$ to obtain
\begin{equation}\label{e_sumzero}
\begin{split}
0 &= \sum_{j\in S} \int \nabla_j f \nabla_j \varphi d\mu_\Lambda\\
	&= \sum_{j \in S}\int \langle\nabla\left(\nabla_j \varphi\right) , \nabla\left(\nabla_j \varphi\right)\langle d\mu_\Lambda + \sum_{j\in S}\int \nabla_j \varphi \langle\nabla\left(\nabla_j H\right) , \nabla \varphi \rangle d\mu_\Lambda\\
	&= \int \sum_{j\in S} \sum_k \Big[ |\nabla_k \nabla_j \varphi|^2 + \nabla_j \varphi \nabla_j \nabla_k H \nabla_k \varphi \Big] d\mu_{\Lambda}
.\end{split}
\end{equation}

In order to conclude, we need to borrow an inequality from~\cite[eqn. (1.9)]{bblLedoux}, which is that
\[0 \geq \varrho_{\ref{sg}} \sum_{j\in S}\int |\nabla_j\varphi|^2 d\mu_\Lambda -
	\int \left(\sum_{i,j\in S}|\nabla_j\nabla_i \varphi|^2 + \sum_{i,j\in S}\nabla_j\varphi M_{ij} \nabla_i \varphi\right) d\mu_\Lambda
.\]
We note that we applied the inequality from~\cite{bblLedoux} on the Gibbs measure $\mu_{S}$ and then integrated in the remaining variables $\Lambda\setminus S$ to obtain the above inequality.  Now, adding this to \eqref{e_sumzero} and using that $M_{ij} = \nabla_i \nabla_j H$, we obtain
\begin{equation}\label{e_averaged}
\begin{split}
0	\geq &\varrho_{\ref{sg}} \sum_{j\in S} \int |\nabla_j \varphi|^2 d\mu_\Lambda
	+ \int \sum_{i \in S, j\notin S} |\nabla_i\nabla_j \varphi|^2 d\mu_\Lambda\\
	&+ \int \sum_{i \in S, j\notin S} \nabla_i \varphi M_{ij} \nabla_j \varphi d\mu_\Lambda.
\end{split}
\end{equation}
To conclude, we simply apply Young's inequality, $2ab \leq a^2 + b^2$, to the last term on the right hand side.
\end{proof}

\begin{figure}[t]
\centering
	\begin{overpic}[scale=.65]{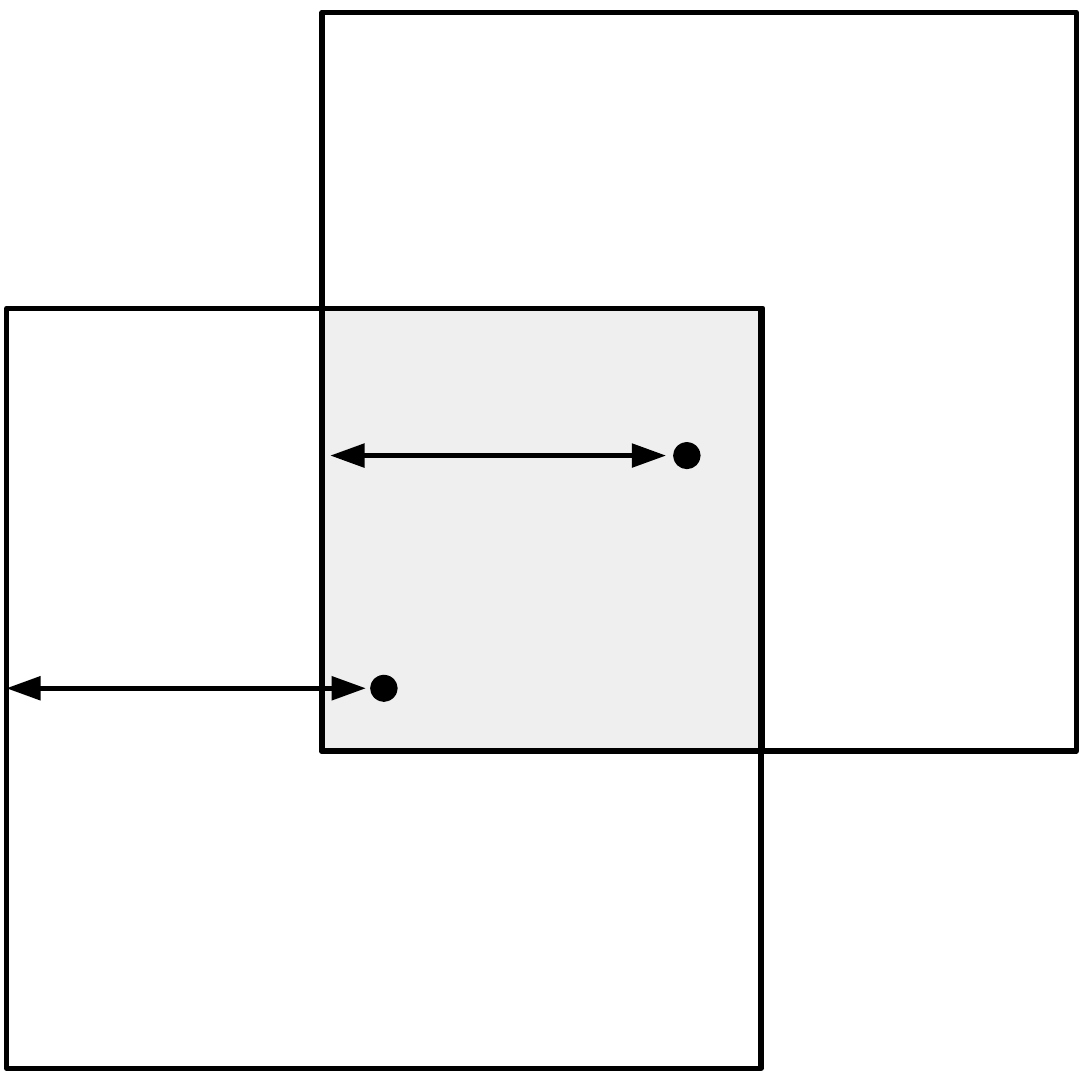}
		\put(44.5,60.5){$R$}
		\put(17,39.5){$R$}
		\put(62.5,60.5){$i$}
		\put(34,39.5){$k$}
	\end{overpic}
	\caption{The volume of the shaded region is $p_i$.  It is easy to see visually why $p_i$ is of the order $R^d$.}\label{f_p_i}
\end{figure}

Finally, we prove Lemma~\ref{p_coefficients} by directly estimating the coefficients.
\begin{proof}[Proof of Lemma~\ref{p_coefficients}:] 
We first note that the lower bound on $p_i$ is a result of a simple counting argument, which is best illustrated by the volume of the shaded region in Figure~\ref{f_p_i}.  Indeed, one may assume by invariance and symmetry that $i = 0$ and that $k \in [0,R]^d$.  In this case, it is easy to see that $[0,R]^d \subset B_{R}(k)\cap B_R(i)$.  Taking the volume of each set finishes the proof.\\

We now approximate $q_i$.  The approach is a straightforward computation, taking advantage of re-writing the sums in polar coordinates.   To this end, we compute
\[\begin{split}
q_i
	&= \sum_{\ell\in B_{R}(k)\cap B_R(i)} \sum_{j\notin B_R(\ell)} \frac{|M_{ij}|}{2}\\
	&\lesssim \sum_{\ell\in B_{R}(k)\cap B_R(i)} \sum_{j\notin B_R(\ell)} [1+|i-j|]^{-(2d+\alpha_{\ref{e_alg_decay}})}\\
	&\leq \sum_{\ell \in B_{R}(k)\cap B_R(i)} \sum_{\{j : |j-i| \geq \dist(i,B_R(\ell)^c)\}} [1+|i-j|]^{-(2d+\alpha_{\ref{e_alg_decay}})}\\
	&\lesssim \sum_{\ell\in B_{R}(k)\cap B_R(i)} [1+\dist(i, B_R(\ell)^c)]^{-(d+\alpha_{\ref{e_alg_decay}})}\\
	&= \sum_{\ell\in B_{R}(k)\cap B_R(i)} [1+ R - |i-\ell|]^{-(d+\alpha_{\ref{e_alg_decay}})}
.\end{split}\]
Above we used the bound~\eqref{e_alg_decay} on $M_{ij}$ in the first inequality.  In the next inequality, since $|i - \ell|\leq R$, we used that if $j \notin B_R(\ell)$ then $|j - i| \geq \dist(i,B_R(\ell)^c)$.  Finally, we simply computed the inner sum.  Notice that $|i-\ell|\leq R$ in every term in the sum, and so we may change coordinates to obtain
\[
q_i
	\lesssim \sum_{m\in B_R(0)} [1 + R - |m|]^{-(d+\alpha_{\ref{e_alg_decay}})}
.\]
Recall that $R \notin \Z$ and $2R\in \Z$.  Switching to polar coordinates yields
\[\begin{split}
q_i	&\lesssim \sum_{r=0}^{R-1/2} \sum_{|m| = r} [1 + R - r]^{-(d+\alpha_{\ref{e_alg_decay}})}
	\lesssim \sum_{r=0}^{R-1/2} (1+r)^{d-1} [1+R-r]^{-(d+\alpha_{\ref{e_alg_decay}})}\\
	&\lesssim (1+R)^{d-1}  \sum_{r=0}^{R-1/2} [1+R-r]^{-(d+\alpha_{\ref{e_alg_decay}})}
	\lesssim R^{d-1}.
\end{split}\]
Here we bounded $r$ by $R$ and used the summability of $[1 + R - r]^{-(d+\alpha_{\ref{e_alg_decay}})}$.  This finishes the bound for $q_i$.

Now we need to bound $\kappa_{kj}$.  Recall from the statement of Lemma~\ref{p_coefficients}, equation~\eqref{e_estimate_block_interaction}, that we must prove two inequalities for $\kappa_{kj}$~\eqref{e_coefficients}.  We begin by showing that
\begin{equation}\label{e_kappa_1}
	\kappa_{kj} \lesssim \frac{R^{d-\overline\alpha_{\ref{e_alg_decay}} + \epsilon}}{[1 + \dist(j,B_{2R}(k))]^{d+\epsilon}}.
\end{equation}
First we handle the case when $j \in B_{2R}(k)$.  Here we compute that
\[\begin{split}
\kappa_{kj}
	&= \sum_{\ell\in B_{R}(k)\cap B_R(j)^c} \sum_{i \in B_R(\ell)} \frac{|M_{ij}|}{2}\\
	&\lesssim \sum_{\ell\in B_{R}(k) \cap B_R(j)^c}\sum_{i \in B_R(\ell)} [1 + |i-j|]^{-(2d+\alpha_{\ref{e_alg_decay}})}\\
	&\lesssim \sum_{\ell\in B_{R}(k)\cap B_R(j)^c} [1 + \dist(j, B_R(\ell))]^{-(d+\alpha_{\ref{e_alg_decay}})}\\
	&= \sum_{\ell\in B_{R}(k)\cap B_R(j)^c} [1 + |j - \ell| - R]^{-(d+\alpha_{\ref{e_alg_decay}})}.
\end{split}\]

We first use the decay of $M_{ij}$ given by equation \eqref{e_alg_decay}, then we computed the inner sum and rewrote $\dist(j,B_R(\ell))$ as $|j-\ell| - R$.  Noticing that $\ell \in B_{R}(k)\cap B_R(j)^c$ and that $j\in B_{2R}(k)$, we have that $R + 1/2 \leq |j-\ell|\leq 3R$.  Here, the lower bound is due to the fact that we have chosen $R \notin \Z$ such that $2R \in \Z$.  Hence, we may switch to polar coordinates to obtain
\[\begin{split}
\kappa_{kj}
	&\lesssim\sum_{r = R+1/2}^{3R} \sum_{|\ell - j|=r} [1 + r - R]^{-(d+\alpha_{\ref{e_alg_decay}})}\\
	&\lesssim \sum_{r=R+1/2}^{3R} r^{d-1}[1 + r - R]^{-(d+\alpha_{\ref{e_alg_decay}})} \\
	&\lesssim R^{d-1}
,\end{split}\]
where the last step follows by bounding $r^{d-1}$ by $(3R)^{d-1}$ and using that $[1 + r - R]^{-(d+\alpha_{\ref{e_alg_decay}})}$ is summable.  We note that for  $j \in B_{2R}(k)$ it holds that
\begin{align*}
1=  1 + \dist(j, B_{2R}(k)) .
\end{align*}
Therefore, by recalling that $\overline \alpha_{\ref{e_alg_decay}} = \min\{\alpha_{\ref{e_alg_decay}}, 1\}$, we obtain that
\[
	\kappa_{kj}
		\lesssim R^{d-1}
		\lesssim \frac{R^{d-\overline\alpha_{\ref{e_alg_decay}}+\epsilon}}{[1 + \dist(j, B_{2R}(k))]^{d+\epsilon}},
\]
finishing the proof when $j \in B_{2R}(k)$.\\

To finish the proof of \eqref{e_kappa_1}, we need to bound $\kappa_{kj}$ in the case that $j \in B_{2R}(k)^c$.  We compute that
\[\begin{split}
\kappa_{kj}
	&= \sum_{\ell \in B_{R}(k)} \sum_{i \in B_R(\ell)} \frac{|M_{ij}|}{2}\\
	&\lesssim \sum_{\ell\in B_{R}(k)} \sum_{i \in B_R(\ell)} [1+ |i-j|]^{-(2d+\alpha_{\ref{e_alg_decay}})}\\
	&\lesssim \sum_{\ell \in B_{R}(k)} [1 + \dist(j, B_R(\ell))]^{-(d+\alpha_{\ref{e_alg_decay}})}
.\end{split}\]
As before, here we simply used the bound on $M_{ij}$, \eqref{e_alg_decay}, and we estimated the inner sum.  Next, we pull the desired decay out of this sum, using that
\[
	\dist(j,B_{2R}(k)) \leq \dist(j, B_R(\ell))
\]
since $B_R(\ell) \subset B_{2R}(k)$ when $\ell \in B_{R}(k)$.  Hence, we obtain
\begin{equation}\label{e_kappa}
\kappa_{kj}
	\lesssim [1 + \dist(j, B_{2R}(k))]^{-(d+\epsilon)} \sum_{\ell \in B_{R}(k)} [1 + \dist(j, B_R(\ell))]^{-\alpha_{\ref{e_alg_decay}}+\epsilon}
.\end{equation}
Define $D =\dist(j, B_{2R}(k))$, and using polar coordinates again, we see that the sum in \eqref{e_kappa} is bounded as
\[\begin{split}
\sum_{\ell\in B_{R}(k)}[1 + \dist(j, B_R(\ell))]^{-\alpha_{\ref{e_alg_decay}}+\epsilon}
	&= \sum_{\ell\in B_{R}(k)}[1 + |j-\ell| - R]^{-\alpha_{\ref{e_alg_decay}}+\epsilon}\\
	&\lesssim \sum_{r= R+D +1/2}^{3R+D+1/2}\sum_{\substack{|\ell - j|=r,\\ \ell \in B_{R}(k)}} [1 + r - R]^{-\alpha_{\ref{e_alg_decay}}+\epsilon}\\
	&\lesssim \sum_{r = R+D+1/2}^{3R+D+1/2}  R^{d-1}[1 + r - R]^{-\alpha_{\ref{e_alg_decay}}+\epsilon}\\
	&\lesssim R^{d-1} R^{1 - \alpha_{\ref{e_alg_decay}} + \epsilon}
	= R^{d-\alpha_{\ref{e_alg_decay}}+\epsilon}
.\end{split}\]
In the second to last inequality, we used that there are at most $CR^{d-1}$ elements in $B_{2R}(k)$ of a fixed distance from $j$, where $C$ is a constant depending only on the dimension.  Combining this with~\eqref{e_kappa}, we arrive at
\[
	\kappa_{kj} \lesssim \frac{R^{d-\overline\alpha_{\ref{e_alg_decay}}+\epsilon}}{[1 + \dist(j, B_{2R}(k))]^{d+\epsilon}},
\]
as claimed.\\

Finally, to finish the proof of the lemma, we have to show that
\begin{align*}
\kappa_{kj} \lesssim  \frac{R^{2d}}{[1+\dist(j, B_{2R}(k))]^{2d+\alpha_{\ref{e_alg_decay}}}}.
\end{align*}
However, this estimate follows easily from the definition of~$\kappa_{kj}$. Indeed, we simply obtain the denominator in the bound for $\kappa_{kj}$ directly from the bound on $|M_{ij}|$, and we obtain the $R^{2d}$ term since there are $R^{2d}$ terms in the summation.

\end{proof}

%
%

\section{Proof of Theorem~\ref{p_bootstrap}: Slow decay of correlations implies fast decay of correlations}\label{s_bootstrap}

In this section we prove Theorem~\ref{p_bootstrap}, i.e.~that we can bootstrap a weaker decay of correlations to the same order of decay of the interaction terms if the Hamiltonian~$H$ satisfies the condition~\eqref{e_Lebowitz_conditions}. Keeping track of the constants from line to line can be a bit tedious so we use the following notation, similarly as in Section~\ref{s_sg_dc}.
\begin{rem}[Notation]
We write $a\lesssim b$ if and only if $a \leq C b$ for some constant $C > 0$ which depends only on the parameters of the Hamiltonian given in \eqref{e_alg_decay} and \eqref{e_diag_dominant}, on the dimension, and on the constants in \eqref{FWDC}.\\

 In addition, for convenience, with any set $A \subset \Lambda$, we write~$n \notin A$ to indicate that~$n \in \Lambda \backslash A$.
\end{rem}

Our iterative argument needs two ingredients. The first one is that for ferromagnetic interaction spin-spin correlations are always non-negative.  This is due to the following well known fact (see~eg.~\cite{Syl76} or~\cite[Lemma 2.1]{bblMenzNittka}).
\begin{lem}
  Assume that the interaction is ferromagnetic in the sense of~\eqref{e_ferromagnetic}. Then 
\begin{align}\label{e_ferro_non_neg_corr}
  \cov_{\mu_{\Lambda}} (x_i, x_j) \geq 0 \qquad \mbox{for any site~$i \in \Lambda$ and~$j \in \Lambda$.}
\end{align}
\end{lem}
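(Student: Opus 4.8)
The plan is to obtain the inequality from the continuum FKG (Harris–Holley–Preston) correlation inequality, which says that a probability measure on $\R^{\Lambda}$ with density proportional to $e^{-H}$ with respect to Lebesgue measure is positively associated — i.e. $\cov_{\mu}(F,G)\geq 0$ for all bounded non-decreasing $F,G$ — whenever $H$ is submodular, which for a $C^2$ Hamiltonian means $\partial_i\partial_j H\leq 0$ for all $i\neq j$. The first step is therefore to verify this sign condition for the Hamiltonian \eqref{d_Hamiltonian} underlying $\mu_\Lambda$. The only terms of $H$ coupling two distinct sites $i,j\in\Lambda$ are the quadratic interactions $M_{ij}x_ix_j$ with $i\neq j$; the single-site contributions $\psi_i(x_i)+s_ix_i$ and the boundary terms $2M_{ij}x_i\omega_j$ are affine in each $x_i$ and so contribute nothing to the mixed second derivatives. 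Hence $\partial_i\partial_j H=M_{ij}\leq 0$ for $i\neq j$ by the ferromagnetic assumption \eqref{e_ferromagnetic}, and $\mu_\Lambda$ is a genuine probability measure thanks to the diagonal dominance \eqref{e_diag_dominant} and the well-tempered boundary condition, so the FKG hypotheses are met.

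The second step is to apply the inequality to the coordinate functions $x_i$ and $x_j$, which are non-decreasing but unbounded. I would first apply FKG to the truncations $F_n(x)=(x_i\wedge n)\vee(-n)$ and $G_n(x)=(x_j\wedge n)\vee(-n)$, which are bounded and non-decreasing, to get $\cov_{\mu_\Lambda}(F_n,G_n)\geq 0$ for all $n$; then letting $n\to\infty$ and using dominated convergence, justified by the uniform second-moment bound of Lemma~\ref{p_uniform moment estimate} (so that $x_i,x_j\in L^2(\mu_\Lambda)$), yields $\E[F_nG_n]\to\E[x_ix_j]$, $\E[F_n]\to\E[x_i]$, $\E[G_n]\to\E[x_j]$, and hence $\cov_{\mu_\Lambda}(x_i,x_j)=\lim_n\cov_{\mu_\Lambda}(F_n,G_n)\geq 0$.

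The one point requiring care — and I expect it to be the only real obstacle, being technical rather than conceptual — is that the continuum FKG inequality is cleanest on compact per-site state spaces. To be safe I would first restrict every spin to a large interval $[-L,L]$: the measure $\mu_\Lambda$ conditioned on $\{\max_{i\in\Lambda}|x_i|\leq L\}$ is again a ferromagnetic Gibbs measure on the compact cube $[-L,L]^{\Lambda}$ with the same Hamiltonian, so the classical Holley–Preston criterion applies directly there; one then sends $L\to\infty$, using the finiteness of the second moments so that $\mu_\Lambda(\{\max_i|x_i|\leq L\})\to 1$ and the relevant expectations converge, either before or jointly with the limit in $n$. An alternative route, matching the standard references on continuous-spin Griffiths-type inequalities, is the duplication trick: writing $2\cov_{\mu_\Lambda}(x_i,x_j)=\E_{\mu_\Lambda\otimes\mu_\Lambda}[(x_i-y_i)(x_j-y_j)]$ and changing variables to $u=(x+y)/\sqrt2$, $v=(x-y)/\sqrt2$, one checks that $H(x)+H(y)=\Phi(u)+\sum_{k,l}M_{kl}v_kv_l+\sum_k W_k(u_k,v_k)$, so that conditionally on $u$ the law of $v$ is both symmetric under $v\mapsto -v$ (whence $\E[v_i\mid u]=\E[v_j\mid u]=0$) and submodular (mixed derivatives $2M_{kl}\leq 0$ for $k\neq l$); FKG for this conditional law — again via the compact-truncation argument — then gives $\E[v_iv_j\mid u]\geq\E[v_i\mid u]\E[v_j\mid u]=0$, and since $(x_i-y_i)(x_j-y_j)=2v_iv_j$ one concludes $\cov_{\mu_\Lambda}(x_i,x_j)=\E[v_iv_j]\geq 0$.
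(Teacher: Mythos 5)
The paper offers no proof of this lemma at all: it simply labels the statement a ``well known fact'' and points to the references \cite{Syl76} and \cite[Lemma 2.1]{bblMenzNittka}. So there is no ``paper's own proof'' to compare line by line against; what you have written is a self-contained argument where the paper gives only a citation. That said, your argument is correct and it is a standard one.

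Both routes you describe work, and the second (duplication) is probably closest in spirit to the Ginibre-style arguments in the cited reference \cite{Syl76}, while the first (direct FKG) is the one usually attributed to Holley and Preston. Two small remarks. First, with the Hamiltonian written as a full double sum as in \eqref{d_Hamiltonian}, the mixed partial for $i\neq j$ is $\partial_i\partial_j H = M_{ij}+M_{ji}$, not $M_{ij}$; this does not affect anything since both summands are nonpositive by \eqref{e_ferromagnetic}, but it is worth being precise. Second, note that the direct FKG route uses only submodularity and therefore needs nothing beyond ferromagneticity and integrability, which is exactly the generality the lemma claims; in particular it does not require the symmetry of $\psi_i$ or the vanishing of the field $s_i$ that are assumed later in \eqref{e_Lebowitz_conditions}, and it is good that your argument reflects this. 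Your duplication argument is likewise valid at this level of generality: the conditional law of $v$ given $u$ is symmetric under $v\mapsto -v$ because $W_i(u_i,v_i)=\psi_i((u_i+v_i)/\sqrt{2})+\psi_i((u_i-v_i)/\sqrt{2})$ is automatically even in $v_i$, whether or not $\psi_i$ is even and whether or not $s_i=0$, so no extra hypotheses creep in. The truncation to $[-L,L]^\Lambda$ and the subsequent passage to the limit using Lemma~\ref{p_uniform moment estimate} are handled correctly; just make sure to note that the hard-wall indicator $\I_{[-L,L]^\Lambda}$ is a product over sites and therefore leaves the lattice (submodular) structure intact, which is what justifies applying the compact-space FKG criterion to the conditioned measure.
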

The second one ingredient is more fundamental (see e.g.~\cite[Theorem 3.1]{Simon} or~\cite[Proposition 5.3]{bblMenzNittka}). 
\begin{lem}\label{e_key_lemma_bootstrapping}
  We assume that the Hamiltonian $H: \mathds{R}^\Lambda \to \mathds{R}$ given by~\eqref{d_Hamiltonian} satisfies the condition~\eqref{e_Lebowitz_conditions} i.e.
\begin{align*}
 \psi_i(z)=\psi_i(-z), \quad  s_i =0, \quad \mbox{and} \quad M_{ij} \leq 0 \qquad \mbox{for all sites $i \neq j$}.
\end{align*}
Let~$A \subset \Lambda$.  Then for any~$i,j \in \Lambda$ satisfying~$i \in A$ and~$j \notin A$ it holds
  \begin{align}
\cov_{\mu_\Lambda} (x_i, x_j) \leq \sum_{ \substack{k \in A  \\ n \notin A } } |M_{kn}|  \Big[ \cov_{\Lambda} (x_i, x_k) \cov_{\Lambda} (x_n, x_j) + \cov_{\Lambda} (x_i, x_n) \cov_{\Lambda}(x_k , x_j) \Big].
\label{e_lebowitz_consequence}    
  \end{align}
\end{lem}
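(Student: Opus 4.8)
The plan is a standard interpolation argument along the interaction bonds joining $A$ to its complement, combined with the Lebowitz inequality and the Griffiths (GKS) inequalities. Since the interaction is ferromagnetic, the single-site potentials are even, the external field is zero, and the boundary values are zero, the density of $\mu_\Lambda$ in~\eqref{e_GibbsMeasure} is even in $x$; hence $\mu_\Lambda$ is invariant under $x\mapsto -x$, all one-point functions vanish, and $\cov_{\mu_\Lambda}(x_a,x_b)=\E_{\mu_\Lambda}[x_ax_b]$ for all $a,b\in\Lambda$. It therefore suffices to bound $\E_{\mu_\Lambda}[x_ix_j]$.

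For $t\in[0,1]$ let $\mu_\Lambda^{(t)}$ be the Gibbs measure of the form~\eqref{e_GibbsMeasure} with $\omega=0$ obtained by replacing each coefficient $M_{kn}$ with $k\in A$, $n\notin A$ (and its mirror $M_{nk}$) by $tM_{kn}$, leaving every other coefficient and every single-site potential untouched; write $\langle\cdot\rangle_t:=\E_{\mu_\Lambda^{(t)}}[\cdot]$. For $t\in[0,1]$ the measure $\mu_\Lambda^{(t)}$ is again ferromagnetic, has even single-site potentials and zero external field, and its interaction remains diagonally dominant, so the Griffiths and Lebowitz inequalities apply to it. At $t=1$ we recover $\mu_\Lambda$, while at $t=0$ the density factorizes over $A$ and $\Lambda\setminus A$, so $\langle x_ix_j\rangle_0=0$ because $i\in A$, $j\notin A$ and the one-point functions of $\mu_A$ vanish. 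Differentiating a Gibbs average in a parameter of its Hamiltonian produces a covariance against the derivative of the Hamiltonian, which here is $c\sum_{k\in A,\,n\notin A}M_{kn}x_kx_n$ with $c$ the multiplicity with which a crossing bond enters the energy in~\eqref{e_GibbsMeasure}. Hence, by the fundamental theorem of calculus,
\begin{align*}
	\cov_{\mu_\Lambda}(x_i,x_j)
	&=\langle x_ix_j\rangle_1-\langle x_ix_j\rangle_0
	=\int_0^1 \frac{d}{dt}\langle x_ix_j\rangle_t\,dt\\
	&=-c\int_0^1\sum_{k\in A,\,n\notin A}M_{kn}\big(\langle x_ix_jx_kx_n\rangle_t-\langle x_ix_j\rangle_t\langle x_kx_n\rangle_t\big)\,dt.
\end{align*}

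Next I apply the Lebowitz inequality to $\mu_\Lambda^{(t)}$: for a ferromagnetic system with even single-site measures and no external field the truncated four-point (Ursell) function is non-positive, i.e.
\[
	\langle x_ix_jx_kx_n\rangle_t-\langle x_ix_j\rangle_t\langle x_kx_n\rangle_t
	\leq \langle x_ix_k\rangle_t\langle x_jx_n\rangle_t+\langle x_ix_n\rangle_t\langle x_jx_k\rangle_t.
\]
Since $-M_{kn}=|M_{kn}|\geq 0$, substituting this into the integral gives
\[
	\cov_{\mu_\Lambda}(x_i,x_j)
	\leq c\int_0^1\sum_{k\in A,\,n\notin A}|M_{kn}|\big(\langle x_ix_k\rangle_t\langle x_jx_n\rangle_t+\langle x_ix_n\rangle_t\langle x_jx_k\rangle_t\big)\,dt.
\]
By Griffiths' first inequality each two-point function appearing here is non-negative, and by Griffiths' second inequality each is non-decreasing under strengthening of ferromagnetic couplings; as increasing $t$ only strengthens the crossing couplings, $0\leq\langle x_ax_b\rangle_t\leq\langle x_ax_b\rangle_1=\cov_{\mu_\Lambda}(x_a,x_b)$ for all $t\in[0,1]$. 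Replacing each factor by its value at $t=1$, using $\langle x_ax_b\rangle_1=\langle x_bx_a\rangle_1$, and carrying out the trivial $t$-integration yields~\eqref{e_lebowitz_consequence} (the constant $c$ being absorbed under the normalization convention for the double sum in~\eqref{e_GibbsMeasure}).

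The only genuinely nontrivial input is the Lebowitz inequality for unbounded even single-site measures, together with the fact that the single-site potentials admitted here — even, and a bounded perturbation of a convex potential — fall in a class (of Griffiths–Simon type) for which the Lebowitz and Griffiths inequalities hold; this is precisely what~\cite{Simon} (and the treatment in~\cite{bblMenzNittka}) supply, and I would cite it rather than reprove it. The remaining points are routine: that $\mu_\Lambda^{(t)}$ stays in the admissible class (immediate), that one may differentiate the Gibbs average under the integral sign (using uniform moment control as in Lemma~\ref{p_uniform moment estimate}), and tracking the combinatorial constant $c$ coming from how crossing bonds are counted in~\eqref{e_GibbsMeasure}.
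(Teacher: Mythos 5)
Your proposal is correct and follows essentially the same route as the paper: the paper simply defers to~\cite[Theorem~3.1]{Simon} and notes that the argument rests on the Lebowitz and second GKS inequalities, and your interpolation in the crossing couplings, combined with Lebowitz's bound on the four-point Ursell function and GKS~I/II monotonicity, is precisely the proof of that cited result, with the applicability of these inequalities to the admissible single-site measures offloaded to~\cite{Simon} and~\cite{bblMenzNittka} exactly as the paper does. Your remark about the combinatorial constant $c$ is the right caveat, since with the ordered double sum in~\eqref{e_GibbsMeasure} the differentiation produces each crossing bond twice; this only affects~\eqref{e_lebowitz_consequence} by a harmless absolute constant, which is immaterial for the bootstrap argument.
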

For the short proof of Lemma~\ref{e_key_lemma_bootstrapping} we refer the reader to~\cite[Theorem 3.1]{Simon}. The argument is based on two inequalities: the Lebowitz inequality and the second GKS inequality. Our condition~\eqref{e_Lebowitz_conditions} on the Hamiltonian~$H$ ensures that those two inequalities are satisfied. Our iterative argument only needs the estimates~\eqref{e_ferro_non_neg_corr} and~\eqref{e_lebowitz_consequence}. As a consequence, the condition~\eqref{e_Lebowitz_conditions} is not used anymore in this section.
\\


Now, we are prepared to start the proof of Theorem~\ref{p_bootstrap}. For two fixed sites $i \in \Lambda$ and $j\in \Lambda$, we define the constant
\begin{equation}
	l =  (d+\alpha_{\ref{e_alg_decay_f}})\log_2(1 + |i-j|)  \label{d_constant_l}
\end{equation}
We fix a constant $L>0$, to be determined later, and assume without loss of generality that $|i-j| > 3(l+1)L$.  Using~\eqref{e_lebowitz_consequence} we obtain
\[\begin{split}
	\cov_{\mu_\Lambda}(x_i,x_j)
	&\leq \sum_{\substack{k\in B_{L}(i)\\n\notin B_{L}(i)}} |M_{kn}|
		\big[\cov_{\mu_\Lambda}(x_i,x_k) \cov_{\mu_\Lambda}(x_n,x_j)\\
		&~~~~ + \cov_{\mu_\Lambda}(x_i,x_n) \cov_{\mu_\Lambda}(x_k,x_j) \big]\\
	&=\sum_{\substack{k\in B_{L}(i)\\ n\notin B_{L}(i)}} |M_{kn}|\cov_{\mu_\Lambda}(x_i,x_n) \cov_{\mu_\Lambda}(x_k,x_j) \\
	 &~~~~+ \sum_{\substack{k\in B_{L}(i)\\n\in B_{L}(i)^c\cap B_{L}(j)^c}}|M_{kn}|\cov_{\mu_\Lambda}(x_i,x_k) \cov_{\mu_\Lambda}(x_n,x_j)\\
		&~~~~+ \underbrace{\sum_{\substack{k\in B_{L}(i)\\n\in B_{L}(i)^c\cap B_{L}(j)}} |M_{kn}| \cov_{\mu_\Lambda}(x_i,x_k) \cov_{\mu_\Lambda}(x_n,x_j)}_{\stackrel{\text{def}}{=}R_{ij}}.
\end{split}\]
Call the sum on the bottom line $R_{ij}$, as indicated above.  This is a ``remainder'' term which decays as fast as we would like.  We show the decay of $R_{ij}$ now.  By our choice of $i$, $j$, and $L$, we know that $|M_{kn}| \lesssim (1+|i-j|)^{-d-\alpha_{\ref{e_alg_decay_f}}}$.  Hence we may bound this last term as
\[\begin{split}
\sum_{\substack{k\in B_{L}(i)\\n\in B_{L}(i)^c\cap B_{L}(j)}}  &|M_{kn}|\cov_{\mu_\Lambda}(x_i,x_k) \cov_{\mu_\Lambda}(x_n,x_j)\\
	&\lesssim \sum_{\substack{k\in B_{L}(i)\\n\in B_{L}(i)^c\cap B_{L}(j)}} \frac{1}{(1+|i - j|)^{d+\alpha_{\ref{e_alg_decay_f}}}} \lesssim \frac{L^{2d}}{(1+|i - j|)^{d+\alpha_{\ref{e_alg_decay_f}}}}
.\end{split}\]
However, $L$ is a fixed constant independent of $i$ and $j$ (to be determined later), so we may consider $L^{2d}$ to be constant absorbed into the $\lesssim$ notation.  Hence this term is of the correct order.\\

Now we focus on proving the same decay for the other two terms.  To make the upcoming iterative step clearer, we first define
\begin{equation}\label{e_def_J_ik}
J_{ik} = \begin{cases} \sum_{n\notin B_{L}(i)} |M_{kn}| \cov_{\mu_\Lambda}(x_i, x_n), &\text{ for } k \in B_{L}(i)\\ \sum_{n\in B_{L}(i)} |M_{kn}| \cov_{\mu_\Lambda}(x_i, x_n), &\text{ for } k \notin B_{L}(i)  \end{cases}
\end{equation}
Now our inequality from above can be written as
\[
	\cov_{\mu_\Lambda}(x_i,x_j) \leq \sum_{k\notin B_{L}(j)} J_{ik} \cov_{\mu_\Lambda}(x_k,x_j) + R_{ij}.
\]
As we will see later, $J_{ik}$ is small; however, it is not of the order $|i-j|^{-(2d+\alpha_{\ref{e_alg_decay_f}})}$.  Hence we use the Lebowitz inequality to split the term $\cov_{\mu_\Lambda}(x_k,x_j)$ to obtain
\[\begin{split}
	\cov_{\mu_\Lambda}(x_i,x_j)
		&\leq \sum_{k\notin B_L(j)} J_{ik} \left[ \sum_{l\notin B_L(j)} J_{kl} \cov_{\mu_\Lambda}(x_l, x_j) + R_{kj} \right] + R_{ij}\\
		&= \sum_{k\notin B_L(j)}\sum_{l\notin B_L(j)} J_{ik}J_{kl} \cov_{\mu_\Lambda}(x_l, x_j) + \sum_{k\notin B_L(j)}\sum_{l\notin B_L(j)} J_{ik}R_{kj} + R_{ij}.
\end{split}\]
This is the step we iterate as in~\cite{bblMenzNittka}.  The reason we do so is the following.  We have established that $R_{ij}$ has the correct decay.  In addition, in the second to last term above, either $|i-k|$ or $|k-j|$ must be comparable to $|i-j|$.  Hence $J_{ij}$ and $R_{kj}$, through the $M$ terms in their definition, will decay like $|i-j|^{-d-\alpha_{\ref{e_alg_decay_f}}}$.  On the other hand, the first term need not decay of this same order.  However, we show that the $J$ terms are small; so iterating yields a product of many small terms.  This decays exponentially in the number of terms in the product and so we need only iterate $O(\log(|i-j|))$ times to get the desired decay. 
\\

Before continuing, we point out that the $R$ terms satisfy the following general bound.  If $|p - q| > 3L$ then
\begin{equation}\label{eq:R_bound}
	R_{pq}
		\lesssim \frac{1}{(1 + |p - q|)^{d + \alpha_{\ref{e_alg_decay_f}}}}.
\end{equation}

After $l$ iterations, we obtain that
\begin{equation}\label{iteration_scheme}
\begin{split}
	|\cov_{\mu_\Lambda}(x_i,x_j)|
		&\leq \sum_{k_1,\dots,k_l \notin B_{L}(j)} J_{ik_1}J_{k_1k_2}\cdots J_{k_{l-1}k_l} \cov_{\mu_\Lambda}(x_{k_l}, x_j)\\
		& + \sum_{k_1,\dots,k_{l-1} \notin B_{L}(j)} J_{ik_1}J_{k_1k_2}\cdots J_{k_{l-2}k_{l-1}} R_{k_{l-1}j}\\
		& + \sum_{k_1,\dots,k_{l-2} \notin B_{L}(j)} J_{ik_1}J_{k_1k_2}\cdots J_{k_{l-3}k_{l-2}} R_{k_{l-2}j} \\
		&+ \cdots + R_{ij}
.\end{split}\end{equation}
We show in the sequel that the first term is exponentially decaying in $l$, while the other terms each have a component of the order $|i-j|^{-(d+\alpha_{\ref{e_alg_decay_f}})}$.\\


To obtain the exponential decay mentioned above, we show that
\begin{equation}\label{eq:J_small}
	\sum_k J_{mk} \leq 1/2
		~~~\text{ and }~~~
		\sum_m J_{mk} \leq 1/2.
\end{equation}
Before proving this, we show how to finish the proof of Theorem~\ref{p_bootstrap} assuming these two inequalities.
\\

Assuming~\eqref{eq:J_small}, we obtain that the first term in~\eqref{iteration_scheme} is bounded as
\begin{equation}\label{eq:first_term}
	\sum_{k_1,\dots, k_l \notin B_L(j)} J_{ik_1}\cdots J_{k_{l-1} k_l} \cov_{\mu_\Lambda}(x_{k_l},x_j)
		\lesssim \left(\frac{1}{2}\right)^l.
\end{equation}
Here we used our bound \eqref{FWDC} to bound the covariance above by a uniform constant.  As we see below, this and our choice of $l$ implies the correct order of decay for this term.  As such, for now, we turn to the other terms in the sum~\eqref{iteration_scheme}\\

Hence, to finish we need only estimate the other terms in~\eqref{iteration_scheme}.  First, for a fixed natural number $r \leq l$, we must bound sums of the form
\[
	\sum_{k_1,\dots, k_{r}\notin B_{L}(j)} J_{k_0k_1}J_{k_1k_2}\cdots J_{k_{r-1}k_{r}}R_{k_rk_{r+1}}.
\]
where we define $k_0 = i$ and $k_{r+1} = j$ for notational convenience.  Since $k_0, k_1, \dots, k_{r+1}$ is a sequence of points connecting $i$ and $j$, we may find $s \in \{1,2,\dots,r+1\}$ such that $|i - j|/ (r+1) < |k_s - k_{s-1}|$.  Define
\[
	A_{s,r}
		\stackrel{\text{def}}{=}
			\left\{(k_1,\dots,k_r) \notin B_L(j)^r: \frac{|i-j|}{r+1} < |k_s-k_{s-1}|  \right\}.
\]
We may then re-write and bound our sum as
\begin{equation}\label{eq:asr}
	\sum_{k_1,\dots, k_{r}\notin B_{L}(j)} J_{k_0k_1}J_{k_1k_2}\cdots J_{k_{r-1}k_{r}}R_{k_rj}
		\lesssim \sum_{s=1}^{r+1}\sum_{A_{s,r}}  J_{k_0k_1}J_{k_1k_2}\cdots J_{k_{r-1}k_{r}}R_{k_rj}.
\end{equation}
For notational simplicity, we only show how to bound the terms for $s \leq r$; however, the bound for $s = r+1$ is obtained in the exact same manner.  By the definition of $A_{s,r}$, by $r\leq l$, and the fact that $|i-j| >3(l+1)L$ we have that
\[
	|k_s - k_{s-1}| \geq \frac{|i-j|}{r+1} \geq \frac{|i-j|}{l+1} \geq 3L,
\]
and hence that
\[
	|k_s - k_{s-1}| - L \geq |k_s - k_{s-1}| - \frac{1}{3}|k_s - k_{s-1}|
		=  \frac{2}{3}|k_s - k_{s-1}|
		\gtrsim \frac{|i-j|}{r}
		\gtrsim \frac{1 + |i-j|}{r}.
\]
In view of the definition~\eqref{e_def_J_ik} of $J_{k_{s-1}k_s}$ and the inequalities above, the fact that $(k_1,\dots, k_r)\in A_{s,r}$ implies that 
\[\begin{split}
	J_{k_{s-1}k_s}
		\lesssim \frac{1}{(1 + |k_s - k_{s-1}| - L)^{d+\alpha_{\ref{e_alg_decay_f}}}}
		\lesssim \frac{r^{d+\alpha_{\ref{e_alg_decay_f}}}}{(1 + |i-j|)^{d+\alpha_{\ref{e_alg_decay_f}}}}.
\end{split}\]

Using this in~\eqref{eq:asr}, we obtain
\begin{equation}\label{eq:iteration_bound}
\begin{split}
	\sum_{k_1,\dots, k_{r}\notin B_{L}(j)} &J_{k_0k_1}J_{k_1k_2}\cdots J_{k_{r-1}k_{r}}R_{k_rj}\\
		&\lesssim \sum_{s=1}^{r+1} \sum_{(k_1,\dots,k_r)\in A_{s,r}} J_{k_0k_1} \cdots J_{k_{s-2}k_{s-1}}\left(\frac{r^{d+\alpha_{\ref{e_alg_decay_f}}}}{(1 + |i-j|)^{d+\alpha_{\ref{e_alg_decay_f}}}}\right) J_{k_s k_{s+1}} \cdots J_{k_{r-1}k_r} R_{k_r j}.
\end{split}
\end{equation}
Applying the bound~\eqref{eq:J_small} to each of the $r-1$ remaining $J$ terms yields
\[
	\sum_{k_1,\dots, k_{r}\notin B_{L}(j)} J_{k_0k_1}J_{k_1k_2}\cdots J_{k_{r-1}k_{r}}R_{k_rj}
		\lesssim \sum_{s=1}^{r+1} \frac{r^{d+\alpha_{\ref{e_alg_decay_f}}}}{(1 + |i-j|)^{d+\alpha_{\ref{e_alg_decay_f}}}} \left(\frac{1}{2}\right)^{r-1}
		= \frac{r^{d+\alpha_{\ref{e_alg_decay_f}}}}{(1 + |i-j|)^{d+\alpha_{\ref{e_alg_decay_f}}}} \frac{r+1}{2^{r-1}}.
\]
To be explicit, in~\eqref{eq:iteration_bound} we first sum over $k_{s-1}$ using that $\sum_{k_{s-1}} J_{k_{s-2}k_{s-1}} < 1/2$, and then we sum over $k_{s-2}$ using~\eqref{eq:iteration_bound} again.  We continue in this manner to sum over all of $k_1, \dots, k_{s-1}$.  Then we sum over $k_s$ using that $\sum_{k_s} J_{k_sk_{s+1}} < 1/2$.  We continue in this manner to sum over all of $k_s, \dots, k_{r-1}$.  Each of these sums contributes a factor of $1/2$ above.  Finally, we use the summability over $k_r$ of $R_{k_r j}$ coming from the bound~\eqref{eq:R_bound} to conclude above.  \\

Plugging this bound and our bound of the first term~\eqref{eq:first_term} into~\eqref{iteration_scheme} yields
\[\begin{split}
	\cov_{\mu_\Lambda}(x_i,x_j)
		&\lesssim 2^{-l} + \frac{1}{(1+|i-j|)^{d+\alpha_{\ref{e_alg_decay_f}}}}\sum_{r=0}^{l-1} \frac{(r+1)r^{d+\alpha_{\ref{e_alg_decay_f}}}}{2^{r-1}}\\
		&\lesssim 2^{-l} + \frac{1}{(1+|i-j|)^{d+\alpha_{\ref{e_alg_decay_f}}}}.
\end{split}\]
Above, we used the summability of $r^{1+d+\alpha_{\ref{e_alg_decay_f}}} 2^{-r}$.  Recalling that by~\eqref{d_constant_l}
\[
	l = (d+\alpha_{\ref{e_alg_decay_f}}) \log_2(1 + |i-j|)
\]
gives us the desired inequality.
%
%
%
%
%
%
%
%
%
%
%
\\

All that remains is to show that~\eqref{eq:J_small} holds.  The proofs of the two bounds above are similar so we show only the estimate of $\sum_k J_{mk}$.  To do this we split this term up a few times.  We write
\begin{equation}\label{JSum}
\sum_k J_{mk} = \sum_{k\in B_{L}(m)} J_{mk} + \sum_{k\notin B_{L}(m)} J_{mk}
.\end{equation}
Let us focus on the first term.  Then
\begin{equation}\label{JFirstTerm}
\begin{split}
	\sum_{k\in B_{L}(m)} J_{mk}
		&= \sum_{k\in B_{L}(m)} \sum_{n\notin B_{L}(m)} |M_{kn}| \cov_{\mu_\Lambda}(x_m, x_n)\\
		&\lesssim \sum_{k\in B_{L}(m)} \sum_{n\notin B_{L}(m)} \frac{1}{(1 + |k-n|)^{d+\alpha_{\ref{e_alg_decay_f}}}} \frac{1}{L^{d + \alpha_{\ref{FWDC}}}}\\
		&\lesssim \frac{L^{d-\min\{1,\alpha_{\ref{e_alg_decay_f}}\}}}{L^{d  + \alpha_{\ref{FWDC}}}} \leq L^{ - \alpha_{\ref{FWDC}} - \min\{1,\alpha_{\ref{e_alg_decay_f}}\}}
.\end{split}
\end{equation}
Here we used the bound in~\eqref{FWDC} to bound $\cov_{\mu_\Lambda}(x_n, x_m)$, and we used that if $\alpha \in (0,1]$ and $R >0$ then
\[
	\sum_{n\in B_R}\sum_{m \notin B_{R}} \frac{1}{(|n-m| + 1)^{d+\alpha}} \lesssim R^{d-\alpha}.
\]
This inequality is computed similarly as those appearing in Section~\ref{ss_lemmas}. 
Hence,~\eqref{JFirstTerm} implies that the first term in \eqref{JSum} tends to zero as $L$ tends to infinity.  
As such, we may choose $L$ large enough such that the sum in \eqref{JFirstTerm} is less than 1/4.\\

Now we deal with the other sum.
\begin{equation}\label{JSecTerm}
\begin{split}
\left|\sum_{k\notin B_{L}(m)} J_{mk}\right|
	&= \sum_{\substack{k\notin B_{L}(m)\\n\in B_{L}(m)}} |M_{kn}| \cov_{\mu_\Lambda}(x_m, x_n)\\
	&= \sum_{\substack{k \notin B_{L}(m)\\ n\in B_{L}(m)\setminus B_{L/2}(m)}} |M_{kn}| \cov_{\mu_\Lambda}(x_m, x_n)\\
		&~~~~ + \sum_{\substack{k\notin B_{L}(m)\\ n\in B_{L/2}(m)}} |M_{kn}|\cov_{\mu_\Lambda}(x_m, x_n).\end{split}\end{equation}
The first sum in \eqref{JSecTerm} we bound exactly as we did in \eqref{JFirstTerm}.  Namely, we compute that
\begin{equation*}\begin{split}
\sum_{\substack{k \notin B_{L}(m)\\n\in B_{L}(m)\setminus B_{L/2}(m)}} |M_{kn}| \cov_{\mu_\Lambda}(x_m,x_n)
	&\lesssim \sum_{n\in B_{L}(m)\setminus B_{L/2}(m)}  \sum_{k\notin B_{L}(m)} \frac{|M_{kn}|}{L^{d+\alpha_{\ref{FWDC}}}}\\
	&\lesssim L^{d-\min\{1,\alpha_{\ref{e_alg_decay_f}}\}} L^{-d - \alpha_{\ref{FWDC}}} = L^{ - (\min\{1,\alpha_{\ref{e_alg_decay_f}}\} + \alpha_{\ref{FWDC}})}
.\end{split}\end{equation*}
The second sum we bound as follows
\[\begin{split}
\sum_{\substack{k \notin B_{L}(m)\\n\in B_{L/2}(m)}}& |M_{kn}|\cov_{\mu_\Lambda}(x_m, x_n)\\
	&\lesssim \sum_{n\in B_{L/2}(m)} \cov_{\mu_\Lambda}(x_m,x_n) \sum_{k \notin B_{L}(m)} |M_{kn}| \\
	&\lesssim \sum_{n\in B_{L/2}(m)} \frac{1}{(1+|n-m|)^{d+\alpha_{\ref{FWDC}}}}\sum_{k \notin B_{L}(m)} \frac{1}{(1 + |k-n|)^{d+\alpha_{\ref{e_alg_decay_f}}}}  \\
	&\lesssim \sum_{n\in B_{L/2}(m)} \frac{1}{(1+|n-m|)^{d+\alpha_{\ref{FWDC}}}} \frac{1}{L^{\alpha_{\ref{e_alg_decay_f}}}}\\
	&\lesssim L^{ - (\min\{1,\alpha_{\ref{FWDC}}\} + \alpha_{\ref{e_alg_decay_f}})}
.\end{split}\]
Hence we may choose $L$ large enough such that the sum in \eqref{JSecTerm} is smaller than $1/4$.  This finishes the proof of~\eqref{eq:J_small}, which in turn concludes the proof of Theorem~\ref{p_bootstrap}.



\section*{Acknowledgements}
The authors would like to thank the anonymous referee of~\cite{bblMenzNittka} for bringing the problem, Theorem~\ref{p_main_result}, to our attention.  The authors would like to thank Pietro Caputo for enlightening discussions on the role of averaging in controlling short range interactions.  The authors also thank Nobuo Yoshida for his email correspondence. Finally, the authors thank the referees of this article. Due to their detailed reports and remarks on the first manuscript, the authors were able to make substantial improvements to this article.  Part of this work was performed within the framework of the LABEX MILYON (ANR-10-LABX-0070) of Universit\'e de Lyon, within the program ``Investissements d'Avenir" (ANR-11-IDEX-0007) operated by the French National Research Agency (ANR).



\bibliographystyle{amsplain}
\bibliography{generalization_yoshida}

\end{document}